\newcommand \fk[1]{{{\mathfrak #1}}}
\newcommand \C[1]{{\mathcal #1}}
\newcommand \wti[1]{{\widetilde {#1}}}
\newcommand\fg{\mathfrak g}
\newcommand \bA{{\mathbb A}}
\newcommand \bC{{\mathbb C}}
\newcommand \bH{{\mathbb H}}
\newcommand \bZ{{\mathbb Z}}
\newcommand \bQ{{\mathbb Q}}
\newcommand\ind{{\text {ind}}}
\newcommand\CO{{\C O}}
\newcommand\CU{{\C U}}
\newcommand\ie{{\it i.e.~ }}
\newcommand\etc{{\it etc.~ }}
\newcommand\ep{{\epsilon}}
\newcommand\al{{\alpha}}
\newcommand\fh{{\mathfrak h}}
\newcommand\fz{{\mathfrak z}}
\newcommand\fb{{\mathfrak b}}
\newcommand\fu{{\mathfrak u}}
\newcommand\fl{{\mathfrak l}}
\newcommand\fp{{\mathfrak p}}
\newcommand\ups{{\upsilon}}
\newtheorem{theorem}{Theorem}
\newtheorem*{theorem 2}{Theorem 2}
\newtheorem{corollary}{Corollary}
\newtheorem{lemma}{Lemma}
\newtheorem{proposition}{Proposition}
\newtheorem{definition}{Definition}
\newcommand\rk{\operatorname{rk}}
\numberwithin{equation}{subsection}
\begin{document}

\title
{Multiplicity matrices for the affine graded
  Hecke algebra}

\author{Dan Ciubotaru\thanks{Supported by the NSF grant FRG-0554278.}\\
Department of Mathematics\\ University of Utah\\
Salt Lake City, UT           84112\\
        \texttt{ciubo@math.utah.edu}}

\date{\today}

\maketitle

\begin{abstract}
In this paper we are looking at the problem of determining the
composition factors for the affine graded Hecke algebra via the
computation of Kazhdan-Lusztig type polynomials. We review  the
algorithms of \cite{L1,L2}, and use them in particular to
compute, at every real central character which admits tempered
modules, the geometric parameterization, the Kazhdan-Lusztig
polynomials, the composition series, and the Iwahori-Matsumoto
involution for the representations with Iwahori fixed vectors of the split
$p$-adic groups of type $G_2$ and $F_4$ (and by the nature of the
algorithms, for their Levi subgroups). 
\end{abstract}

\setcounter{tocdepth}{1}
\tableofcontents

\section{Introduction}\label{sec:0}

\subsection{}\label{sec:0.1}
Let $G$ be a complex, connected, simply-connected, semisimple Lie
group, let $H$ be a Cartan subgroup, and $B\supset H$ a Borel
subgroup. Let $\fg,$ $\fb,$ $\fh$ denote the respective Lie algebras,
and let $\Delta,$ $\Delta^+,$ $\Pi$ be the corresponding roots, positive roots, and simple roots
respectively, and let $W$ be the Weyl group. If $\al\in\Delta$, the
corresponding coroot is $\check\al\in\fh,$ and the reflection in $W$ is
$s_{\al}.$ The pairing between $\fh$ and $\fh^*$ is denoted
$\langle\ ,\ \rangle.$ 
 
The {\it affine graded Hecke
algebra} $\bH$ was introduced in \cite{Lu3}. We will only 
consider a special case of the definition, the ``equal parameters'' case. The generators of $\bH$
are the elements $\{t_{s_{\al}}\colon \al\in\Pi\}$  and
$\{\omega\colon \omega\in \fh^*\}.$ Here $\Pi$ denotes the set of
simple roots. As a $\bC$-vector space,
\begin{equation}\label{eq:3.1.1}
\bH=\bC[W]\otimes \bA,
\end{equation}
where
\begin{equation}\label{eq:3.1.2}
\bA=Sym(\fh^*).
\end{equation}
The following commutation relations hold:
\begin{equation}
\omega t_{s_\al}=t_{s_\al} s_\al(\omega)+2\langle\omega,\check\al\rangle,
\quad \al\in\Pi,\ \omega\in\fh^*.
\end{equation}

The center $Z(\bH)$ of $\bH$ consists  of the $W$-invariants in
$\bA$ (\cite{Lu3}):
\begin{equation}
Z(\bH)=\bA^W.
\end{equation}

 On any irreducible $\bH$-module, which is necessarily
finite dimensional, $Z(\bH)$ acts by a {\it central
  character}. Therefore, the central characters are parameterized by
$W$-conjugacy classes in $\fh$. Let $mod_\chi(\Bbb H)$ be the category of finite dimensional modules
of $\Bbb H$ with central character $\chi.$ This will be the main
object of interest in this paper.

\subsection{}\label{sec:BC} The connection with the representation theory of
$p$-adic groups is well-known, and we briefly recall it next. Let $\C G$ be
the split adjoint $p$-adic group whose dual group (in the sense of
Langlands) is $G.$ Let $\C I$ denote an Iwahori subgroup of $\C G$.The
{\it Iwahori-Hecke algebra}, denoted $\C H$
is the algebra of locally constant compactly supported $\C
I$-biinvariant 
functions under convolution.  If a $\C G$-representation $(\pi,V)$ has $\C
I$-fixed vectors, then $\C H$ acts 
on $V^\C I.$ Let $C(\C I,1)$ be  the category of admissible
representations whose every subquotient is generated by its  $\C
I$-fixed 
vectors. 
\begin{theorem}[Borel]\label{t:bc}
The association $V\mapsto V^\C I$ is an equivalence of categories
between $C(\C I,1)$ and the category $mod(\C H)$ of finite dimensional
representations of $\C H.$    
\end{theorem}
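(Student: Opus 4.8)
The plan is to reinterpret both sides of the claimed equivalence as module categories over an idempotent modification of the full Hecke algebra of $\C G$, reduce the bulk of the statement to formal ``idempotent yoga,'' and thereby isolate the one genuinely $p$-adic ingredient.

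\textbf{Step 1 (idempotent set-up).} Fix a Haar measure and let $\C H(\C G)=C_c^\infty(\C G)$ be the full convolution Hecke algebra; it is an idempotented (non-unital) algebra, and smooth $\C G$-representations are the same thing as its non-degenerate modules. Let $e=e_{\C I}\in\C H(\C G)$ be the idempotent attached to $\C I$, the normalized characteristic function of $\C I$. There is a canonical isomorphism $e\,\C H(\C G)\,e\cong\C H$, and for a smooth $(\pi,V)$ the operator $\pi(e)$ is the projection onto $\C I$-fixed vectors, so $V^{\C I}=\pi(e)V=eV$. Thus the functor of the statement is $F\colon V\mapsto eV$; on an admissible $V$ it lands in $mod(\C H)$ because $V^{\C I}$ is finite-dimensional. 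It is \emph{exact}: $\pi(e)$ is a natural idempotent operator, and passing to its image (a natural direct summand) is exact.

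\textbf{Step 2 (adjoint and formal consequences).} I would pair $F$ with $G\colon M\mapsto\C H(\C G)e\otimes_{\C H}M$; the identity $\Hom_{\C G}(\C H(\C G)e,V)\cong eV$ exhibits $G$ as a left adjoint of $F$, with counit $c_V\colon GFV\to V$. The unit $\mathrm{id}\to FG$ is an isomorphism, since $e(\C H(\C G)e\otimes_{\C H}M)=(e\C H(\C G)e)\otimes_{\C H}M=M$. Hence $G$ is fully faithful, $F$ is essentially surjective, and $F$ and $G$ restrict to mutually inverse equivalences between $mod(\C H)$ and the essential image $\C E:=\{\,V:\ c_V\text{ is an isomorphism}\,\}$. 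Everything so far is formal category theory.

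\textbf{Step 3 (identifying $\C E$ with $C(\C I,1)$).} The image of $c_V$ is the subrepresentation $\C H(\C G)\cdot eV\subseteq V$ generated by $V^{\C I}$, so $c_V$ is surjective precisely when $V$ is generated by its $\C I$-fixed vectors; this holds for every $V\in C(\C I,1)$ by definition (take the subquotient to be $V$ itself). For the kernel, apply the exact functor $F$ to $0\to\ker c_V\to GFV\xrightarrow{c_V}V\to\operatorname{coker}c_V\to 0$: since $F(c_V)$ is an isomorphism (by the triangle identity, the unit $\mathrm{id}\to FG$ being invertible), exactness forces $e(\ker c_V)=e(\operatorname{coker}c_V)=0$. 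Granting Step 4, that $GFV$ always lies in $C(\C I,1)$, the inclusion $\C E\subseteq C(\C I,1)$ is immediate ($V\in\C E\Rightarrow V\cong GFV$); conversely, if $V\in C(\C I,1)$ then $c_V$ is surjective, so $\ker c_V$ is a subrepresentation of $GFV\in C(\C I,1)$, hence itself an object of $C(\C I,1)$ --- which is closed under subobjects by its very definition --- with vanishing $\C I$-fixed space, hence generated by $0$, hence $0$. So $c_V$ is an isomorphism, $V\in\C E$, and $\C E=C(\C I,1)$; the theorem follows.

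\textbf{Step 4 (the main obstacle).} The one non-formal assertion is that $\C H(\C G)e\otimes_{\C H}M$ is admissible and lies in $C(\C I,1)$, that is, all its subquotients are generated by their $\C I$-fixed vectors. This is the structure theory of the Iwahori component: one reduces to the unramified principal series $I(\chi)$, identifies $I(\chi)^{\C I}$ via the Bernstein presentation with a standard $\C H$-module, checks that $c_{I(\chi)}$ is then an isomorphism, and propagates the property along subquotients and extensions, so that $C(\C I,1)$ is a Serre subcategory and in fact a direct summand of the smooth category. Equivalently one may quote Bushnell--Kutzko type theory: $(\C I,\mathbf 1)$ is a type, $e_{\C I}$ its associated idempotent, and the theory yields precisely this equivalence. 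I expect this step --- the only point at which genuine $p$-adic input, rather than formal algebra, is required --- to be the crux; Steps 1--3 are bookkeeping around an idempotent.
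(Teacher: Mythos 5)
The paper does not prove this theorem; it cites it as a result of Borel and immediately quotes (without proof) the Casselman fact it rests on, so there is no in-text argument to compare against. Evaluating your proposal on its own terms: Steps 1--3 are correct and entirely formal --- they would hold verbatim for any idempotent in any idempotented algebra. Two small points worth flagging there: exactness of $V\mapsto V^{\C I}$ comes from compactness of $\C I$ (smooth $\C I$-representations are semisimple), and closure of $C(\C I,1)$ under subobjects, which you use to kill $\ker c_V$, is built into the paper's definition of $C(\C I,1)$. All of the genuine $p$-adic content is concentrated in your Step~4, namely that $\C H(\C G)e_{\C I}\otimes_{\C H}M$ is admissible with \emph{every} subquotient generated by its $\C I$-fixed vectors --- in Bushnell--Kutzko language, that $e_{\C I}$ is a \emph{special} idempotent. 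You state this but do not prove it, so as written the proposal is an accurate and well-organized skeleton rather than a proof. To complete Step~4 one needs exactly the Casselman input the paper quotes after the theorem (every subquotient of an unramified principal series lies in $C(\C I,1)$, and every irreducible in $C(\C I,1)$ occurs there), together with a Jacquet-module argument to get admissibility of $\C H(\C G)e_{\C I}\otimes_{\C H}M$. Borel's original proof is less categorical (direct analysis of principal series and $\dim V^{\C I}$); your framing is the standard modern one and is sound once Step~4 is supplied.
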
 
Casselman proved that every subquotient of a minimal principal
series $I(\lambda)$ of $\C G$, where $\lambda$ is an unramified character of
the maximal (split) torus of $\C G$ dual to $H$, is in $C(\C I,1)$, and in fact
every irreducible object in $C(\C I,1)$ is of this form. 

In \cite{KL}, the Langlands (geometric) classification for the category $mod(\C H)$ is proved. 
By a result of Bernstein, the central characters of $\C H$ are 
parameterized by $W$-orbits of elements in $H.$ If $\bar\chi\in H,$
let $mod_{\bar\chi}(\C H)$ be the subcategory of modules with central
character $\bar\chi.$ The connection with
the graded Hecke algebra is in \cite{Lu3}. The algebra $\bH$ is the
associated graded object to a certain filtration in the Iwahori-Hecke
algebra $\C H.$ 
 Assume $\chi$ is a
hyperbolic element of $\fh$, and then $\bar\chi=exp(\chi)$ is a
hyperbolic element of $H.$  There
is a natural correspondence between irreducible objects
\begin{equation}\label{eq:irr}
Irr~mod_{\bar\chi}(\C H)\leftrightarrow Irr~mod_{\chi}(\bH).
\end{equation}
We emphasize that, in (\ref{eq:irr}), $\bar\chi$ is hyperbolic. If the
category is $mod_s(\C H),$ with $s$ arbitrary, one decomposes
$s=s_e\cdot \bar\chi$ into the elliptic and hyperbolic parts. A similar
correspondence holds, 
but the graded Hecke algebra in the right hand side is one for a root
system defined by the centralizer of $s_e$ in $G$ (which is connected
by Steinberg's theorem).

\subsection{} The geometric classification for $mod_\chi(\bH)$ exists as well (\cite{Lu4}; also \cite{KL,Lu3}), and we have standard modules $X$
and irreducible quotients $L.$ 
In the bijections of section \ref{sec:BC}, the standard,  respectively
irreducible, modules correspond.

The classification is expressed in terms of the
geometry of the spaces $\fg_n(\chi):$ 
\begin{equation}G(\chi)=\{g\in G: Ad(g)\chi=\chi\},\quad
\fg_n(\chi)=\{y\in\fg:[\chi,y]=ny\}.\end{equation}

Let $Orb_n(\chi)$ denote the set of $G(\chi)$ orbits on $\fg_n(\chi).$
Assume that $n\in \Bbb Z\setminus\{0\}.$ Then $\fg_n(\chi)$ is a
prehomogeneous $G(\chi)$-vector space (\cite{Ka}), and in fact $Orb_n(\chi)$ is finite. For every $\CO\in Orb_n(\chi),$ $\overline\CO\setminus \CO$ is
  the union of some orbits $\CO'$ with $\dim\CO'<\dim\CO.$

\begin{theorem}[\cite{Lu4}]\label{t:0.1} The standard and irreducible objects in
  $mod_\chi(\Bbb H)$ are in bijection with pairs $\xi=(\CO,\C L)$, where 
\begin{enumerate}
\item $\CO$ is a $G(\chi)$-orbit on $\fg_2(\chi);$
\item $\C L$ is a  $G(\chi)$-equivariant local system on
  $\CO$ of Springer type.
\end{enumerate}
\end{theorem}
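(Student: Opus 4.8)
The plan is to reduce Theorem~\ref{t:0.1} to the already-cited structure theorems by exhibiting the parameter set $\{\xi=(\CO,\C L)\}$ as the natural index for a basis of $K$-theory (or equivariant homology) of a suitable variety, and then invoking the Deligne--Langlands--Lusztig classification machinery as developed in \cite{KL,Lu3,Lu4}. Concretely, I would first fix a hyperbolic $\chi\in\fh$ and set up the reductive group $G(\chi)$ together with its action on the graded pieces $\fg_n(\chi)$; the key geometric input, due to Kazhdan and Kato, is that for $n\neq 0$ the space $\fg_n(\chi)$ is a prehomogeneous vector space with finitely many $G(\chi)$-orbits, so that $Orb_2(\chi)$ is finite and each closure $\ovl\CO$ is a union of orbits of strictly smaller dimension (already recorded in the excerpt). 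This finiteness is what makes the classification by pairs even sensible, and I would make it the cornerstone of the argument.

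Next I would recall the relevant geometric realization: following \cite{Lu3,Lu4}, one considers the variety of triples (or its Steinberg-type analogue) attached to $\chi$, namely pairs consisting of a vector $y\in\fg_2(\chi)$ together with a Borel-type flag compatible with $\chi$ and $y$, and one studies the equivariant homology or $K$-theory of the fibres of the associated moment-type map. The algebra $\bH$, specialized at central character $\chi$, acts on this (co)homology by Lusztig's convolution construction, and standard modules are defined as the homology of a single fibre $\CB_y^\chi$. Decomposing this space under the component group $A(\CO) = G(\chi)^y/(G(\chi)^y)^\circ$ and applying the Springer correspondence picks out exactly those local systems $\C L$ ``of Springer type,'' i.e. those appearing in the Springer representation of the relevant Weyl group datum; the irreducible quotient $L_\xi$ is then the image of the unique irreducible sub/quotient labelled by $\xi$. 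The bijection on standard modules follows because the fibres $\CB_y^\chi$, as $\CO$ ranges over $Orb_2(\chi)$ and $\C L$ over Springer-type local systems, give a complete and non-redundant list; the bijection on irreducibles follows from the upper-triangularity of the transition (multiplicity) matrix with respect to the closure order on orbits, which is precisely the Kazhdan--Lusztig-type statement the paper is built around.

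The main obstacle — and the step I would treat most carefully — is establishing that the construction is \emph{exhaustive and non-redundant} at the given central character: every irreducible in $mod_\chi(\bH)$ arises from some $\xi$, and distinct $\xi$ give non-isomorphic modules. Exhaustiveness is where one genuinely uses that $\chi$ is real/hyperbolic, so that the reduction of \cite{Lu3} from the affine graded Hecke algebra to geometry is clean and the parabolic induction bookkeeping does not introduce elliptic contributions; non-redundancy relies on the faithfulness of the Springer correspondence and on the fact that the standard module $X_\xi$ has $L_\xi$ as its unique irreducible quotient with multiplicity one (the ``self-referential'' diagonal entry of the multiplicity matrix). I would handle both by citing the corresponding statements in \cite{Lu4} rather than reproving them, since the point of this paper is the \emph{computation} of the multiplicity matrices, not a new proof of the classification; thus the ``proof'' of Theorem~\ref{t:0.1} is properly a reduction-and-citation argument, with the finiteness of $Orb_2(\chi)$ and the Springer-type constraint on $\C L$ as the two substantive ingredients to make explicit. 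Finally I would note the compatibility with Theorem~\ref{t:bc} and the correspondence \eqref{eq:irr}, so that the geometric parameters also classify the Iwahori-spherical representations of $\C G$ at the hyperbolic part of the central character.
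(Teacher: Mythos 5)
Theorem \ref{t:0.1} is not proved in this paper at all: it is stated as a direct citation of Lusztig \cite{Lu4} (with \cite{KL,Lu3} as background), supplemented only by the remark that the allowed local systems correspond to representations of $A(e,\chi)$ occurring in the restriction from $A(e)$ of those appearing in the Springer correspondence. Your proposal correctly treats the statement as a reduction-and-citation argument and gives a reasonable sketch of the underlying Lusztig geometry, so it matches the paper's approach; the only small slips are attributing the prehomogeneity of $\fg_n(\chi)$ to ``Kazhdan and Kato'' where the paper cites Kawanaka \cite{Ka}, and a somewhat loose gloss of the ``Springer type'' condition, which in the graded Lie algebra setting runs through the generalized Springer correspondence and its cuspidal local systems rather than the ordinary Springer representation alone (as the paper's later $G_2$, $sp(6)$, and $F_4$ examples make essential).
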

More precisely, choose some $e\in \CO.$ Then $\C L$
  corresponds to a representation $\phi$ of the component group
  $A(e,\chi)=G(e,\chi)/G(e,\chi)^0$. The representations $\phi$ which are
  allowed  must be in the
  restriction from $A(e)=G(e)/G(e)^0$ to $A(e,\chi)$ of
  a representation which appears in the Springer correspondence.

\subsection{}\label{sec:0.2}
In this setting, the Kazhdan-Lusztig conjectures take the following
form.

\begin{theorem}[\cite{Lu4}]\label{t:0.2} In (the Grothendieck group of)
  $mod_\chi(\Bbb H)$:

\begin{align}&X_{\xi'}=\sum_{\xi} P_{\xi,\xi'}(1)\cdot L_\xi,\text{
    with }\\\notag
&P_{\xi,\xi'}(q)=\sum_{i\ge 0}\ [\C L: \C H^{2i} IC(\overline {\CO'},\C
    L')\mid_\CO]\cdot q^i,
\end{align}
where $\C H^{j}IC()$ denotes the  $j$-th cohomology sheaf of the intersection
cohomology complex. 
\end{theorem}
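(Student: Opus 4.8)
\emph{Proof proposal.} The plan is to establish the statement geometrically, following the strategy by which the Kazhdan--Lusztig conjectures are proved in \cite{Lu4}, and taking Theorem~\ref{t:0.1} as given. The central object is the $G(\chi)$-equivariant derived category of constructible sheaves on $\fg_2(\chi)$, together with a Springer-type resolution: one attaches to $\chi$ the grading of $\fg$ by $\operatorname{ad}\chi$-eigenvalues, forms the variety $\dot{\fg}_2(\chi)$ of pairs $(y,\fp')$ with $y\in\fg_2(\chi)$ and $\fp'$ a parabolic-type subalgebra with $y$ in the appropriate graded nilradical, and considers the proper map $\pi\colon\dot{\fg}_2(\chi)\to\fg_2(\chi)$. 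First I would record the two geometric inputs. (i) \textbf{Convolution realization.} The convolution algebra on the $G(\chi)$-equivariant (Borel--Moore) homology of $\dot{\fg}_2(\chi)\times_{\fg_2(\chi)}\dot{\fg}_2(\chi)$ is identified with (the relevant summand of) $\bH$ acting on $mod_\chi(\bH)$, in such a way that the geometric standard module $X_\xi$ attached to $\xi=(\CO,\C L)$ is a suitable isotypic component (for the $A(e,\chi)$-action, $e\in\CO$) of $H_*(\pi^{-1}(e))$, and the geometric irreducible $L_\xi$ is its unique irreducible quotient; this packages Theorem~\ref{t:0.1} together with the module-theoretic analysis of \cite{Lu4}. (ii) \textbf{Decomposition theorem.} Applied to $\pi$, it gives that $\pi_!\,\bC_{\dot{\fg}_2(\chi)}$, suitably shifted, is a direct sum of simple perverse sheaves $IC(\overline\CO,\C L)$, which recovers the parameter set of Theorem~\ref{t:0.1}.

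Next I would run the Grothendieck-group computation. For a stratum $\CO\subset\overline{\CO'}$, restriction of $IC(\overline{\CO'},\C L')$ to a point $e\in\CO$ gives the stalk complex, whose cohomology sheaves $\C H^{j}IC(\overline{\CO'},\C L')\mid_\CO$ are $A(e,\chi)$-representations; write $m_{\xi,\xi'}^{j}=[\C L:\C H^{j}IC(\overline{\CO'},\C L')\mid_\CO]$. The microlocal structure of $mod_\chi(\bH)$ under the equivalence of (i) — namely that $X_{\xi'}$ is assembled by successive extensions along the stratification of $\overline{\CO'}$, with the intersection cohomology complex governing the glueing — yields in the Grothendieck group the abstract identity $[X_{\xi'}]=\sum_\xi\big(\sum_j(-1)^j m_{\xi,\xi'}^{j}\big)[L_\xi]$, valid already at the level of Euler characteristics.

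The final step is \textbf{parity vanishing}: $\C H^{j}IC(\overline{\CO'},\C L')\mid_\CO=0$ for $j$ odd. Here the ``Springer type'' hypothesis is essential: these $IC$ sheaves are direct summands of $\pi_!\bC$ for the Springer-type resolution, hence pure, and the parity argument of Kazhdan--Lusztig and Lusztig (even-ness of the cohomology of Springer-type fibres, equivalently purity together with an even $G(\chi)$-orbit paving of $\overline{\CO'}$) forces their stalks into even degrees. Granting this, $\sum_j(-1)^j m_{\xi,\xi'}^{j}=\sum_{i\ge0}m_{\xi,\xi'}^{2i}=P_{\xi,\xi'}(1)$ with $P_{\xi,\xi'}(q)=\sum_i m_{\xi,\xi'}^{2i}q^i$, which is exactly the asserted formula; the diagonal term $m_{\xi',\xi'}^{0}=1$ is consistent with $L_{\xi'}$ occurring once in $X_{\xi'}$.

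I expect the main obstacle to be input (i): pinning down the precise equivalence between the algebraic category $mod_\chi(\bH)$ and the geometric one — that convolution on equivariant homology reproduces $\bH$ on the nose, matches standard with standard and irreducible with irreducible, and in particular matches the exact structures so that the Grothendieck-group identity above is legitimate. (An alternative is to deduce the graded statement from the Iwahori-Hecke / Deligne--Langlands version of \cite{KL} via the associated-graded relation of \S\ref{sec:BC} and the correspondence (\ref{eq:irr}) at hyperbolic central characters; but matching the geometry of $\fg_2(\chi)$ with the equivariant $K$-theoretic Springer picture carries comparable difficulty.) The parity vanishing, though ``standard'', is the other genuinely substantive ingredient.
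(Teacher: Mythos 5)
The paper does not prove Theorem~\ref{t:0.2}; it is cited directly to \cite{Lu4}, with the parallel affine Iwahori--Hecke result attributed to \cite{CG}. Your outline correctly reconstructs the strategy of those sources: convolution algebra on $G(\chi)$-equivariant homology of the Steinberg-type fibred product over $\fg_2(\chi)$, decomposition theorem for the proper Springer-type map giving the $IC(\overline\CO,\C L)$ summands, the Euler-characteristic identity in the Grothendieck group matching stalks of $IC$ against composition multiplicities, and parity vanishing to convert the alternating sum into $P_{\xi,\xi'}(1)$. You also correctly flag the two genuinely nontrivial points: the precise equivalence matching the algebraic module category $mod_\chi(\bH)$ with the geometric one (so that the Grothendieck-group computation is legitimate), and the parity vanishing itself, which in the graded setting is not a consequence of pavings alone but rests on Lusztig's purity and Fourier-transform arguments. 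As a blind reconstruction of the cited proof this is accurate; it is not a self-contained proof, and neither is anything in the paper.
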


(\noindent In the setting of the affine Iwahori-Hecke algebra, the similar result was established in \cite{CG}.)

\begin{corollary}\label{c:0.2}
In particular, $P_{\xi,\xi}=1,$ and if $\xi\neq \xi'$, then
$P_{\xi,\xi'}=0,$ unless $\CO'\subsetneq \overline \CO.$
\end{corollary}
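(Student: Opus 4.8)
The plan is to obtain both assertions directly from the character formula of Theorem~\ref{t:0.2}, which expresses the transition coefficient $P_{\xi,\xi'}(1)=[X_{\xi'}:L_\xi]$, and its $q$-graded refinement $P_{\xi,\xi'}(q)$, as a multiplicity in the cohomology sheaves of an intersection cohomology complex restricted to an orbit. Write $\xi=(\CO,\C L)$ and $\xi'=(\CO',\C L')$. I will work with $IC$ in Lusztig's normalization --- the one consistent with the shape of that formula, in which only the even cohomology sheaves $\C H^{2i}$ and the monomials $q^{i}$ with $i\ge 0$ occur --- and I will invoke two elementary properties of the intersection cohomology complexes appearing there. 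First, such a complex is supported exactly on the closure of the orbit it is attached to. Second, its restriction to that orbit, which is the open, dense, smooth stratum of its support, is the attached local system placed in cohomological degree $0$, so that over this stratum $\C H^{0}$ restricts to that local system while every other cohomology sheaf restricts to $0$. I will also use, from Theorem~\ref{t:0.1}, that the local systems occurring in the parameters are irreducible, so that the multiplicities entering the formula of Theorem~\ref{t:0.2} are honest nonnegative integers, equal to $1$ for a local system in itself and to $0$ for one irreducible local system in a non-isomorphic one.

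First I would settle the diagonal entries: when $\xi'=\xi$, the intersection cohomology complex in Theorem~\ref{t:0.2} is restricted to the open dense orbit of its own support, so by the second property only the degree-$0$ summand survives, and it contributes the multiplicity of $\C L$ in itself, namely $1$; hence $P_{\xi,\xi}(q)=1$. Next I would treat the off-diagonal vanishing, so assume $\xi\neq\xi'$. If the orbit along which the complex is restricted is not contained in the closure of the orbit it is attached to, then, that closure being closed and $G(\chi)$-stable and hence a union of orbits, the two orbits are disjoint; the restriction is then $0$ by the first property, every term of the formula of Theorem~\ref{t:0.2} vanishes, and $P_{\xi,\xi'}$ is the zero polynomial. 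If the two orbits coincide but $\C L\neq\C L'$, then by the second property the formula collapses to the multiplicity of one irreducible local system in a non-isomorphic one, which is $0$, so again $P_{\xi,\xi'}=0$. The only possibility left is that the two orbit closures satisfy the strict inclusion dictated by the support condition of the first property, which is precisely the exceptional case $\CO'\subsetneq\overline{\CO}$ recorded in the corollary; moreover, by the property of the closure order recalled just before Theorem~\ref{t:0.1}, this inclusion forces a strict drop in dimension, so the matrix $\big(P_{\xi,\xi'}(1)\big)$ is triangular for the closure order (equivalently the dimension order) with $1$'s on the diagonal.

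I do not expect any genuine obstacle here: once Theorem~\ref{t:0.2} is granted, the corollary is a formal consequence of the support and normalization properties of intersection cohomology sheaves together with the irreducibility of the local systems in the parameters. The only points needing care are bookkeeping ones --- matching the cohomological normalization of $IC$ to the one under which Theorem~\ref{t:0.2} is stated, namely the normalization forced by the shape of that statement, and keeping straight, in the formula of Theorem~\ref{t:0.2}, which of the two parameters $\xi,\xi'$ contributes the intersection cohomology complex and which contributes the orbit of restriction; once that is pinned down, the closure relation asserted in the corollary drops out at once.
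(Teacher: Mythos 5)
Your argument has the right ingredients and is essentially the intended (one-line) deduction from Theorem~\ref{t:0.2}: support of the $IC$ complex, the normalization that puts the local system in degree $0$ over the open stratum, and irreducibility of the local systems in the parameters. The diagonal case and the case $\CO=\CO'$, $\C L\neq\C L'$ are handled correctly.

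However, the final identification is backwards, and this is not a harmless slip: it contradicts what your own support argument proves. In Theorem~\ref{t:0.2} the $IC$ complex is built from $(\overline{\CO'},\C L')$ --- the datum attached to $\xi'$ --- and restricted to $\CO$ --- the orbit attached to $\xi$. Your first-property argument therefore gives vanishing unless $\CO\subseteq\overline{\CO'}$, and after excluding $\CO=\CO'$ this is the strict inclusion $\CO\subsetneq\overline{\CO'}$. But you then assert that this ``is precisely the exceptional case $\CO'\subsetneq\overline{\CO}$ recorded in the corollary,'' which is the reverse inclusion. The two conditions are not the same; for two distinct orbits with $\CO\subsetneq\overline{\CO'}$ one has $\dim\CO<\dim\CO'$, whereas $\CO'\subsetneq\overline{\CO}$ would force $\dim\CO'<\dim\CO$. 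In the last paragraph you explicitly flag ``keeping straight which of the two parameters contributes the intersection cohomology complex and which contributes the orbit of restriction'' as the delicate point, but you never actually carry out that check; had you done so, you would have obtained $\CO\subsetneq\overline{\CO'}$, not what is printed in the corollary. (The printed statement of the corollary does appear to have the indices transposed relative to Theorem~\ref{t:0.2}; compare the regular-case computation in Section~\ref{sec:3.1}, where $P_{M_1,M_2}\neq 0$ forces $\Pi_{M_1}\subseteq\Pi_{M_2}$, i.e.\ $\CO\subseteq\overline{\CO'}$, and the upper-triangular $gl(4)$ table. So the correct conclusion of your argument is the one you derived, not the one you then claimed to match; the error is in conflating the two rather than in the underlying geometry.)
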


\subsection{}\label{IM}An important feature of $\bH$ is the {\it Iwahori-Matsumoto involution} $IM$,
\begin{equation}
IM(t_w)=(-1)^{\ell(w)}t_{w},\ w\in W,\quad IM(\omega)=-\omega,\ \omega\in \fh^*.
\end{equation}
which gives a bijection
\begin{equation}
IM:\ Irr~mod_\chi\bH\leftrightarrow Irr~mod_{-\chi}\bH.
\end{equation}
One also has an obvious involution
\begin{equation}
\kappa:\bH\to\bH,\quad \kappa(w)=w,\ \kappa(\omega)=-\omega.
\end{equation}
Finally, there is the geometric Fourier-Deligne transform $FD$ (\cite{L2}, 2.1; \cite{EM}) which induces a bijection between irreducible $G(\chi)$-equivariant local systems supported on orbits in $Orb_2(\chi)$ and irreducible $G(\chi)$-equivariant local systems supported on orbits in $Orb_{-2}(\chi).$ The connection between these maps is given by \cite{EM}.

\begin{theorem}\label{t:IM} In $mod_\chi(\bH)$, the Fourier-Deligne transform and the (modified) involution $IM$ induce the same bijection on irreducible modules:
\begin{equation}
\kappa\circ IM=FD.
\end{equation}
\end{theorem}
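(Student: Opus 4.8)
The plan is to reduce the statement to a known comparison between the Fourier–Deligne transform $FD$ on equivariant perverse sheaves and the Iwahori–Matsumoto involution on modules, making the role of the modification $\kappa$ explicit. First I would set up the two sides carefully. On the geometric side, by Theorem~\ref{t:0.1} the irreducible objects of $mod_\chi(\bH)$ are indexed by pairs $\xi=(\CO,\C L)$ with $\CO\in Orb_2(\chi)$; the localization construction of \cite{Lu4} attaches to such a $\xi$ a standard module $X_\xi$ and its irreducible quotient $L_\xi$, and it does so in a way compatible with the geometry of $\fg_2(\chi)$ as a prehomogeneous $G(\chi)$-space. The transform $FD$ intertwines the derived category of $G(\chi)$-equivariant complexes on $\fg_2(\chi)$ with that on $\fg_{-2}(\chi)$, carrying $IC$-sheaves to $IC$-sheaves (up to shift), hence inducing a bijection on the index sets for $mod_\chi$ and $mod_{-\chi}$; this is exactly the bijection in the statement. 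On the algebraic side, $IM$ is an algebra involution with $IM^{-1}\cdot Z(\bH)\cdot IM$ sending $\chi\mapsto -\chi$ on central characters (since $IM(\omega)=-\omega$), so pulling back along $IM$ gives an equivalence $mod_\chi(\bH)\to mod_{-\chi}(\bH)$ and thus a bijection on irreducibles.

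The key step is to invoke the main comparison theorem of \cite{EM}, which relates the geometric Fourier transform to $IM$ in precisely this localized setting. I would extract from \cite{EM} the statement that, at the level of the parameter sets $(\CO,\C L)$, the involution $IM$ is realized geometrically by $FD$ \emph{after correcting by $\kappa$} — the correction being forced by the sign discrepancy between how $IM$ acts on the $t_w$ (via $(-1)^{\ell(w)}$) and how $\kappa$ acts trivially on $\bC[W]$ while negating $\fh^*$. Concretely: $\kappa\circ IM$ acts on $\bC[W]$ by $w\mapsto(-1)^{\ell(w)}w$ and on $\fh^*$ by the identity, so its effect on central characters is trivial — wait, that is not what we want; rather the point is that $\kappa\circ IM$ and $FD$ both send $mod_\chi$ to $mod_{-\chi}$ (for $FD$ because it moves $\fg_2$ to $\fg_{-2}$; for $\kappa\circ IM$ because $IM$ negates $\chi$ and $\kappa$ restores... ). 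I would make this bookkeeping precise: $IM$ sends $\chi\mapsto-\chi$, $\kappa$ sends $-\chi\mapsto\chi$... so in fact $\kappa\circ IM$ preserves $mod_\chi$, while $FD$ does not. Therefore the correct reading of the theorem must be that $FD$ is computed \emph{relative to an identification} $\fg_2(\chi)\simeq\fg_{-2}(\chi)$, or equivalently $FD$ is post-composed with the map induced by $\chi\mapsto-\chi$; I would state this identification explicitly at the start of the proof and then the two bijections $mod_\chi\to mod_\chi$ agree.

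With the framework fixed, the proof is then: (i) check on standard modules that $X_{(\CO,\C L)}$ is built from the $IC$-sheaf $IC(\overline\CO,\C L)$ via Lusztig's functor, so that applying $FD$ and applying the pullback by $\kappa\circ IM$ give isomorphic standard modules — this reduces to the $EM$ comparison plus the observation that $FD$ commutes with the relevant convolution/induction functors used to build $X_\xi$; (ii) pass to irreducible quotients, using that both operations are exact (or at least send the canonical quotient to the canonical quotient), so the induced maps on $Irr$ coincide; (iii) conclude $\kappa\circ IM=FD$ as bijections on irreducibles. I expect the main obstacle to be step (i): verifying that Lusztig's geometric construction of the $\bH$-module structure on the equivariant cohomology/standard module is genuinely intertwined by the Fourier–Deligne transform, including tracking the cohomological shifts and the twist by the sign character of $W$ that produces the $(-1)^{\ell(w)}$ in $IM$. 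This is essentially the content imported from \cite{EM}, and the work here is to confirm that their hypotheses apply to the prehomogeneous spaces $\fg_{\pm 2}(\chi)$ and that the normalizations of $FD$ in \cite{L2} and \cite{EM} match the one used here.
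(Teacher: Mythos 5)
The paper gives no proof of Theorem~\ref{t:IM}: it is attributed directly to \cite{EM}, with \cite{L2}, 2.1 supplying the definition of $FD$ in the graded-Lie-algebra setting. Your proposal is therefore, in strategy, the same as the paper's --- invoke the Evens--Mirkovi\'c comparison between the Fourier transform on equivariant sheaves and the Iwahori--Matsumoto involution, then match normalizations.

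What you should fix is the bookkeeping paragraph, where you go back and forth and end up asserting that ``$\kappa\circ IM$ preserves $mod_\chi$, while $FD$ does not,'' proposing to repair this by pre- or post-composing $FD$ with an identification $\fg_2(\chi)\simeq\fg_{-2}(\chi)$. No such extra identification is needed, and introducing one risks double-counting a sign. The point is that \emph{both} $Orb_2(\chi)$ and $Orb_{-2}(\chi)$, with their attached Springer-type local systems, already parameterize the same set $Irr\,mod_\chi(\bH)$: Theorem~\ref{t:0.1} does it for $\fg_2(\chi)$, and the entirely symmetric construction (replacing $2$ by $-2$, cf.\ the parallel bases $\C Z_\pm,\C U_\pm$ in Section~\ref{sec:2}) does it for $\fg_{-2}(\chi)$. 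Hence $FD$, as a bijection from $\{(\CO,\C L):\CO\in Orb_2(\chi)\}$ to $\{(\CO',\C L'):\CO'\in Orb_{-2}(\chi)\}$, is already a self-bijection of $Irr\,mod_\chi(\bH)$ once one reads the two sides through their respective classifications. On the algebra side, $IM$ negates $\fh^*$ and $\kappa$ negates it again, so $\kappa\circ IM$ fixes the center and likewise gives a self-bijection of $Irr\,mod_\chi(\bH)$; the content of \cite{EM} is that these two self-bijections coincide. Your step (i) (that Lusztig's construction of the standard module from $IC(\overline\CO,\C L)$ is intertwined by $FD$, including the sign-character twist producing $(-1)^{\ell(w)}$) is exactly the substance of \cite{EM}, so ``importing'' it is legitimate, but you should state it as the content being cited rather than as something to be verified afresh.

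Also, a small correction to your step (ii): $IM$, $\kappa$, and $FD$ are not exact in a sense that would be useful here; what one actually uses is that $IM$ and $\kappa$ are algebra (anti-)automorphisms, so twisting by them is an exact equivalence of module categories sending simples to simples and standards to standards, and that $FD$ sends simple equivariant perverse sheaves to simple ones --- that, together with the identification of standard modules, is what propagates the statement from standards to irreducible quotients.
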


\subsection{}\label{sec:0.3}
The goal is to compute the
matrix of multiplicities of theorem \ref{t:0.2} for $mod_\chi(\bH).$
We will restrict ourselves (as we may by the theorems exposited in
section \ref{sec:BC}) to the case when $\chi\in\fh$ is hyperbolic
(``real''). We follow the algorithms presented in \cite{L1,L2}. In
sections \ref{sec:1} and \ref{sec:2} we review in a combinatorial way
these algorithms. We should mention that there is no resemblance
between this ``p-adic'' algorithm, and the classical algorithms for computing
Kazhdan-Lusztig polynomials for complex or real groups. One of the
very particular features of this algorithm, at the same time its main
difficulty, is the use of the Fourier-Deligne transform. As
a byproduct of the calculations (and by theorem \ref{t:IM}), one
obtains a (difficult) procedure for computing the Iwahori-Matsumoto involution on the
$\C G$-modules in $Irr~C(\C I,1)$. An explicit, ``closed formula'', description of the
action of $IM$ on $Irr~C(\C I,1)$ is only known for $GL(n),$ by
\cite{Ze3,MW}.

A second feature of the algorithm is that in order
to carry out the calculation for $G$, one needs to have done this first for all
Levi subgroups of $G$.

\smallskip

 Let $\ups$ denote an
indeterminate, which in the end will be specialized to $\ups=1.$ 
We will only consider {\it real}
central characters, \ie $W$-conjugacy classes of hyperbolic
semisimple elements in $\fh.$ The output of the algorithm is a square
matrix of size $\# Irr~mod_\chi(\bH)$ with (polynomial) entries in
$\bZ[v].$ If $c_{\xi,\xi'}(v)$ is such an entry, then the relation
with the
(Kazhdan-Lusztig) polynomial $P_{\xi,\xi'}(q)$ from theorem \ref{t:0.2} is 
\begin{equation}
\varepsilon\cdot c_{\xi,\xi'}(v)=v^{\dim\CO'-\dim\CO}\cdot P_{\xi,\xi'}\left(\frac1{v^2}\right),
\end{equation}
where $\varepsilon\in\{+1,-1\}$ is a sign depending only on $\xi'.$

In other words, the polynomials $c_{\xi,\xi'}$ computed by the
algorithm are, up to sign, those which give conjecturally the degrees
in the Jantzen filtration.

\smallskip

In section \ref{sec:3}, we give some simple examples of how the
algorithm is applied. The regular case, when $\chi=2\check\rho,$ so
that the trivial module is in $mod_\chi(\bH)$, is geometrically
trivial. We present it just as an illustration of the combinatorics of
sections \ref{sec:1} and \ref{sec:2}. For the same reason, we also
present a well-known example from $GL(4)$ (\cite{Ze},\cite{Ze3}). In $GL(n)$,
\cite{Ze1}  relates the polynomials $P_{\xi,\xi'}(q)$  with Kazhdan-Lusztig polynomials
in category $O$, giving therefore a different, indirect, way to
determine them. Finally, we present the cases in $\fg=sp(6)$ and
$\fg=G_2$ where there are cuspidal (in the sense of Lusztig) local
systems. 

In section \ref{sec:4}, we calculate the polynomials when $\fg=F_4,$
for all $\chi$ which are middle elements of nilpotent orbits. By the
geometric classification, they are precisely the (hyperbolic) central
characters which afford tempered modules. These are the difficult
cases of the algorithm. The most interesting case is when $\chi$ is the middle element
of the unique nilpotent orbit in $F_4$ which has a cuspidal local
system. This is presented in more detail in section \ref{sec:3111}.

\smallskip

For parts of these calculations (most notably, Weyl group conjugations
in $F_4$, 
and checking if certain vectors are in the radical of the bilinear form
defined in \ref{sec:1.5}), I used a computer and a computer algebra
system. In all the examples, the notation for nilpotent orbits is as
in \cite{Ca}.

\

\noindent{\bf Acknowledgments.} I thank G. Lusztig for illuminating discussions
about his preprint \cite{L2}, and also P. Trapa for his generous help with Kazhdan-Lusztig
theory in the real and complex groups setting.

\section{Ingredients}\label{sec:1}

We recall the algorithm in \cite{L1,L2} in a combinatorial language. The intention is to express all
the elements of the algorithm purely in terms of the Weyl group and
the roots in $\Delta.$ To this end, we will record two
equivalent descriptions for the same object: the first labeled $(*)$
as in \cite{L2}, and the second, equivalent, but more combinatorial,
labeled $(**)$ (which often appears in \cite{L1} as well).

Throughout sections \ref{sec:1} and \ref{sec:2}, we fix a semisimple element $\chi\in \fh$ such that $\chi$ is the
middle element of a Lie triple in $\fg.$ We assume that $\chi$ is
dominant with respect to $\Delta^+:$ 
\begin{equation}
\langle\al,\chi\rangle\ge 0, \text{ for all }\al\in\Delta^+,
\end{equation}
so that, in particular,  $w_0\chi=-\chi,$ where $w_0$ is the longest Weyl group element.

\subsection{}\label{sec:1.1}
The assumption on $\chi$ is that there exists a Lie algebra
homomorphism 
\begin{equation}
\phi:sl(2,\bC)\to \fg,\quad \phi(\left(\begin{matrix}1&0\\0&-1\end{matrix}\right))=\chi.
\end{equation}
By the representation theory of $sl(2,\bC),$ this implies that the element $\chi$ induces a grading on the Lie algebra $\fg$, by its
$ad$-action: 
\begin{equation}\label{eq:1.1.1}
\fg=\bigoplus_{n\in\bZ}\fg_n,\quad \fg_n=\{x\in\fg: [\chi,x]=nx\}. 
\end{equation}
For any subalgebra $\fp$ of $\fg,$ we will denote similarly
$\fp_n=\fg_n\cap\fp.$ 
We also define
\begin{align}\label{eq:1.1.2}
&r_n(\C R)=\{\al\in\C R: \langle\al,\chi\rangle=n\}, \text{ for any
  subset of roots }\C R\subset \Delta, and\\\notag
&r_n(w)=r_n(\Delta)\cap (w^{-1}\cdot\Delta^+), \text{ for any element
  }w\in W.
\end{align}
Some immediate properties that we will need later are:

\begin{lemma}\label{l:1.1} For all $w\in W,$ $n\in\bZ$:
\begin{enumerate}
\item[(a)]$r_n(w)=r_{-n}(w_0w);$
\item[(b)] $r_n(w)\cap
r_n(w_0w)=\emptyset;$ 
\item[(c)] $r_n(w)\cup r_n(w_0w)=r_n(\Delta).$
\end{enumerate}
\end{lemma}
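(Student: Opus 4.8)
The plan is to deduce all three parts from a single identity comparing $w^{-1}\Delta^+$ with $(w_0w)^{-1}\Delta^+$, together with the trivial remark that, by definition, $r_n(w)$ is the ``slice'' $r_n(\Delta)=\{\al\in\Delta:\langle\al,\chi\rangle=n\}$ intersected with the set $w^{-1}\Delta^+$ of roots that $w$ carries into $\Delta^+$. First I would record that, with $\Delta^-=\Delta\setminus\Delta^+$, one has $w_0^2=1$ and $w_0\Delta^+=\Delta^-$, hence
\[
(w_0w)^{-1}\Delta^+=w^{-1}w_0\Delta^+=w^{-1}\Delta^-=-(w^{-1}\Delta^+)=\Delta\setminus(w^{-1}\Delta^+).
\]
In particular the two sets $w^{-1}\Delta^+$ and $(w_0w)^{-1}\Delta^+$ partition $\Delta$.

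Parts (b) and (c) are then obtained by intersecting this partition of $\Delta$ with $r_n(\Delta)$. On one side, $(w^{-1}\Delta^+)\cap(w^{-1}\Delta^-)=\emptyset$ gives $r_n(w)\cap r_n(w_0w)=r_n(\Delta)\cap(w^{-1}\Delta^+)\cap(w^{-1}\Delta^-)=\emptyset$, which is (b); on the other, $(w^{-1}\Delta^+)\cup(w^{-1}\Delta^-)=\Delta\supseteq r_n(\Delta)$ gives $r_n(w)\cup r_n(w_0w)=r_n(\Delta)$, which is (c). (Only $w_0\Delta^+=\Delta^-$ is used here; the dominance of $\chi$ plays no role in (b) or (c).)

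For part (a) I would apply the bijection $\al\mapsto-\al$ of $\Delta$. Since $\langle-\al,\chi\rangle=-\langle\al,\chi\rangle$, it carries $r_n(\Delta)$ onto $r_{-n}(\Delta)$, and by the displayed identity it carries $w^{-1}\Delta^+$ onto $w^{-1}\Delta^-=(w_0w)^{-1}\Delta^+$; hence it carries $r_n(w)=r_n(\Delta)\cap(w^{-1}\Delta^+)$ onto $r_{-n}(\Delta)\cap(w_0w)^{-1}\Delta^+=r_{-n}(w_0w)$, which is (a). (Alternatively one can argue through $w_0$ itself, using that dominance of $\chi$ gives $w_0\chi=-\chi$; but the negation map is the most economical bijection to use.)

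The argument is entirely elementary, so I do not expect a genuine obstacle. The one point demanding care is keeping the two effects of $w_0$ separate --- it reverses the sign of every root ($w_0\Delta^+=\Delta^-$), and, because $\chi$ is dominant, it sends $\chi$ to $-\chi$ --- and getting the order correct in $(w_0w)^{-1}=w^{-1}w_0$, so that the displayed identity, on which all three parts rest, comes out right.
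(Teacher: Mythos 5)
Your approach is the natural one, and the paper gives you nothing to compare against: its proof of this lemma is the single word ``Straightforward.'' Parts (b) and (c) are handled cleanly and correctly by reducing everything to the identity $(w_0w)^{-1}\Delta^+ = w^{-1}w_0\Delta^+ = w^{-1}\Delta^- = \Delta\setminus(w^{-1}\Delta^+)$ and then intersecting with the slice $r_n(\Delta)$.

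For (a), though, be careful about what you have actually proved versus what you claim. Your negation argument shows that $\al\mapsto-\al$ carries $r_n(w)$ \emph{bijectively onto} $r_{-n}(w_0w)$, i.e.\ $-r_n(w)=r_{-n}(w_0w)$; you then write ``which is (a),'' but (a) as printed asserts the literal set equality $r_n(w)=r_{-n}(w_0w)$, and that cannot be what is meant. For $n\neq 0$ the two sets lie inside the disjoint slices $r_n(\Delta)$ and $r_{-n}(\Delta)$, so set equality would force both to be empty; and for $n=0$, set equality together with the disjointness in (b) again forces $r_0(w)=r_0(w_0w)=\emptyset$, which contradicts (c) as soon as $r_0(\Delta)\neq\emptyset$. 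So (a) has to be read as an equality of cardinalities, or equivalently as the existence of a canonical, $w$-independent bijection, and $\al\mapsto-\al$ (or equally well $\al\mapsto w_0\al$) is exactly the right witness. The $w$-independence is what actually gets used in Lemma~\ref{l:1.3}(b), where one pushes a single fixed bijection through the symmetric differences $r_n(w_1)\vee r_n(w_2)$ before taking $\#$. You should state this explicitly rather than present your bijection as the literal set identity (a).
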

\begin{proof} Straightforward.\end{proof}

\subsection{}\label{sec:1.2} Define 
\begin{align}\label{eq:1.2.1}
&G(\chi)=\{g\in G: Ad(g)\chi=\chi\},\\\notag
& W(\chi)=\{w\in W: w\chi=\chi\},\\\notag
&\C B(\chi)=G(\chi)\text{-orbits in }\{\fb': \fb'\text{ Borel subalgebra with } \chi\in\fb'\}.
\end{align}
The space in which the constructions will take place is $\C
K(\chi)$, defined as
\begin{align}
(*): &\ \C K(\chi)=\bQ(\ups)\text{-vector space with basis } \C B(\chi), \\\label{eq:1.2.2}\notag
(**): &\ \C K(\chi)=\bQ(\ups)\text{-vector space with basis } W/W(\chi).
\end{align}

The space $\C K(\chi)$ has two  involutions that we consider. The first is 
\begin{equation}\label{eq:1.2.3}
\beta :\C K(\chi)\to \C K(\chi),\text{ determined by
}\beta(\ups)=\ups^{-1}, \text{ and the identity on the $\bQ(v)$-basis.}
\end{equation}

The second involution $\sigma$ associates to
each Borel subalgebra, the opposite Borel subalgebra, and it is the
identity on $\bQ(v).$  On $W(\chi)$-cosets, this
is determined by
\begin{align}
(**):\  &\sigma(w)=w_0\cdot w \text{ and } \sigma(v)=v.
\end{align}

\subsection{}\label{sec:1.3} 
Consider the variety
\begin{equation}
\C S_\chi=\{(\fb',\fb''):\chi\in\fb'\cap\fb''\},
\end{equation}
with the natural diagonal action of $G(\chi).$
One defines the space $(\C B\times \C B)(\chi)$:
\begin{align}\label{eq:1.3.1}
(*): &\ (\C B\times \C B)(\chi)=G(\chi)\text{-orbits on } \C S_\chi,
  \text{ which can naturally be identified with}\\\notag
(**): &\ (\C B\times \C B)(\chi)=(W\times W)/W(\chi)\ (\text{where $W(\chi)$ is regarded
  as the diagonal subgroup}).
\end{align}
This space is equipped with a function 
\begin{equation}\label{eq:1.3.1a}
\tau:(\C B\times \C B)(\chi)\to\bZ
\end{equation} 
as follows. For a Borel 
subalgebra $\fb',$ let $\fu'$ denote the unipotent
radical. Then
\begin{align}\notag
(*):&\ \tau((\fb',\fb''))=-\dim\displaystyle\frac{\fu_0'+\fu_0''}{\fu_0'\cap\fu_0''}+\dim\displaystyle\frac{\fu_2'+\fu_2''}{\fu_2'\cap\fu_2''},
  \text{ or, equivalently,}\\\label{eq:1.3.2}
(**):&\ \tau((w_1,w_2))=\#(r_2(w_1)\vee
r_2(w_2))-\#(r_0(w_1)\vee r_0(w_2)),
\end{align}
where $\vee$ denotes the symmetric difference set
operator. It is proved in \cite{L2} that $\tau$ is the same if one
replaces $2$ in the formulas above by $-2.$

\subsection{}\label{sec:1.4}
Set
\begin{equation}
c=\# r_2(\Delta)-\# r_0(\Delta),
\end{equation}
where $r_n$ is defined in (\ref{eq:1.1.2}).

\begin{lemma}\label{l:1.3}
\begin{enumerate}
\item[(a)] The map $\tau$ in definition $(**)$ of (\ref{eq:1.3.2}) is
  well-defined, \ie 
\begin{equation*}\tau((w_1w,w_2w))=\tau((w_1,w_2)), \text{ for any } w\in W(\chi).\end{equation*} 
\item[(b)] Since $w_0\chi=-\chi,$
  $\tau((w_1,w_2))=\tau((w_1w_0,w_2w_0)),$ for every $(w_1,w_2)\in
W\times W.$
\item[(c)] For any $w_1,w_2\in W$,
$$\tau((w_1,w_2))+\tau((\sigma(w_1),w_2))=c.$$ (So
this sum is independent of $w_1,w_2$).
\end{enumerate}
\end{lemma}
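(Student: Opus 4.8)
The plan is to work entirely with the combinatorial description $(**)$, reducing everything to counting roots via the function $r_n$. Recall $\tau((w_1,w_2))=\#(r_2(w_1)\vee r_2(w_2))-\#(r_0(w_1)\vee r_0(w_2))$, where $\vee$ is symmetric difference.

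For part (a), the key observation is that right multiplication by $w\in W(\chi)$ permutes the set $r_n(\Delta)=r_n(w)\cup r_n(w_0w)$ of $n$-height roots and, more to the point, intertwines the two subsets: I claim $r_n(w_1w)=w^{-1}\big(r_n(w_1)\big)$ as subsets of $r_n(\Delta)$. Indeed $r_n(w_1w)=r_n(\Delta)\cap(w^{-1}w_1^{-1}\Delta^+)$, and since $w\in W(\chi)$ we have $w^{-1}(r_n(\Delta))=r_n(\Delta)$, so this equals $w^{-1}\big(r_n(\Delta)\cap(w_1^{-1}\Delta^+)\big)=w^{-1}(r_n(w_1))$. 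Applying $w^{-1}$ is a bijection $r_n(\Delta)\to r_n(\Delta)$, hence it commutes with symmetric difference and preserves cardinality: $\#(r_n(w_1w)\vee r_n(w_2w))=\#\big(w^{-1}(r_n(w_1))\vee w^{-1}(r_n(w_2))\big)=\#(r_n(w_1)\vee r_n(w_2))$. Summing the $n=2$ and $n=0$ contributions with the right sign gives $\tau((w_1w,w_2w))=\tau((w_1,w_2))$.

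For part (b), since $w_0\chi=-\chi$ we have $w_0(r_n(\Delta))=r_{-n}(\Delta)$, and a computation like the one above gives $r_n(w_iw_0)=w_0^{-1}(r_{-n}(w_i))$ — but here I should instead invoke the already-proven fact (stated just before section \ref{sec:1.4}, from \cite{L2}) that $\tau$ is unchanged if $2$ is replaced by $-2$ in the defining formula; combined with Lemma \ref{l:1.1}(a), $r_n(w_iw_0)=r_{-n}(w_i)$, this immediately yields $\tau((w_1w_0,w_2w_0))=\#(r_{-2}(w_1)\vee r_{-2}(w_2))-\#(r_0(w_1)\vee r_0(w_2))=\tau((w_1,w_2))$ using the $\pm 2$ symmetry. (Alternatively one checks directly that $r_0(w_iw_0)=r_0(w_i)$ up to the $w_0$-action on $r_0(\Delta)$, which again preserves cardinalities of symmetric differences.)

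For part (c), note $\sigma(w_1)=w_0w_1$, so by Lemma \ref{l:1.1}(b,c) the sets $r_n(w_1)$ and $r_n(w_0w_1)=r_n(\sigma(w_1))$ partition $r_n(\Delta)$. The elementary set-theoretic identity is: if $A\sqcup A'=U$ then for any $B\subseteq U$, $\#(A\vee B)+\#(A'\vee B)=\#U$ — because an element of $U$ lies in exactly one of $A,A'$, hence contributes to exactly one of the two symmetric differences when it is outside $B$, and to exactly one when it is inside $B$ (it is in $A\vee B$ iff it lies in $A\setminus B$ or $B\setminus A$; check the four cases). Applying this with $U=r_2(\Delta)$, $A=r_2(w_1)$, $A'=r_2(\sigma(w_1))$, $B=r_2(w_2)$ gives $\#(r_2(w_1)\vee r_2(w_2))+\#(r_2(\sigma(w_1))\vee r_2(w_2))=\#r_2(\Delta)$, and similarly with $0$ in place of $2$. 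Subtracting, $\tau((w_1,w_2))+\tau((\sigma(w_1),w_2))=\#r_2(\Delta)-\#r_0(\Delta)=c$, the independence of $w_1,w_2$ being manifest.

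The whole argument is essentially bookkeeping; the only place requiring a small amount of care is the set-theoretic lemma in part (c) (a four-case check on membership in $A$ versus $B$), and in part (b) making sure one correctly invokes the $\pm 2$ symmetry of $\tau$ rather than re-deriving it. I do not expect any genuine obstacle.
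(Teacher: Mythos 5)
Your proof is correct and follows the paper's sketch (the paper simply cites Lemma~\ref{l:1.1} and the $\pm 2$ invariance of $\tau$; you supply the bookkeeping). One small caveat in part~(b): Lemma~\ref{l:1.1}(a) as written relates $r_n(w)$ to $r_{-n}(w_0w)$ — left multiplication by $w_0$ — so the statement ``$r_n(w_iw_0)=r_{-n}(w_i)$'' does not literally follow from it; what you actually want, and what your own direct computation already gives, is $r_n(w_iw_0)=w_0^{-1}\bigl(r_{-n}(w_i)\bigr)$, after which the argument proceeds exactly as in part~(a), since applying the bijection $w_0^{-1}$ preserves cardinalities of symmetric differences. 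So drop the clause attributing ``$r_n(w_iw_0)=r_{-n}(w_i)$'' to Lemma~\ref{l:1.1}(a) and instead cite your preceding identity; everything else, including the four-case symmetric-difference count in part~(c), is fine.
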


\begin{proof}
Part (a) is immediate. For part (b), one uses lemma \ref{l:1.1}.(a)
and the observation after the equation (\ref{eq:1.3.2}). Part (c)
follows from lemma \ref{l:1.1}, parts (b) and (c).
\end{proof}

\subsection{}\label{sec:1.5}Let \begin{equation}pr_j:(\C B\times \C
  B)(\chi)\to \C B(\chi),\ \ j=1,2\end{equation} denote the
projection onto the $j$-th coordinate. One defines a symmetric
bilinear form on $\C K(\chi)$
$$(\ :\ ):\C K(\chi)\times \C K(\chi)\to \bQ(\ups)$$ by
\begin{align}
(*): &\ \ e_\chi^{-1}\cdot([\fb']:[\fb''])&=\displaystyle\sum_{\begin{matrix}\Omega\in (\C
    B\times \C B)(\chi)\\pr_1\Omega=[\fb'], pr_2\Omega=[\fb'']
  \end{matrix}} (-\ups)^{\tau(\Omega)},\\
(**): &\ \ e_\chi^{-1}\cdot([w_1]:[w_2])&=\displaystyle\sum_{[(w',w'')]\in
  pr^{-1}_1([w_1])\cap pr^{-1}_2([w_2])} (-\ups)^{\tau((w',w''))}\\\notag
                     &&=\displaystyle\sum_{w\in
                       [w_1]}(-\ups)^{\tau((w,w_2)}=\displaystyle\sum_{w\in[w_2]}(-\ups)^{\tau((w_1,w))}.  
\end{align}
In these formulas, $[\bullet]$ denotes the class of an element
$\bullet$ in $\C B(\chi)$, and similarly in $(\C B\times \C B)(\chi).$

The factor $e_\chi\in\bQ(v)$ is a normalization factor, and it
depends only on $\chi.$
The choice that we will use is $e_\chi=(1-v^2)^{-\rk\fg}$ (\cite{L1}), so that we have the identity
\begin{equation}\label{eq:1.5.3}
\beta((\beta(\xi),\beta(\xi')))=(-1)^{\rk\fg}(-v)^{2\rk\fg-c}(\sigma(\xi):\xi'),\text{
  for all }\xi,\xi'\in\C K(\chi).
\end{equation}

In general, the bilinear form $(\ :\ )$ is degenerate. Let
$Rad$ denote its radical. From equation (\ref{eq:1.5.3}), we see that
\begin{equation}\beta(Rad)=Rad.\end{equation}

\begin{proposition}[\cite{L2}] The dimension of $\C K(\chi)/Rad$
  equals $\# Irr~mod_\chi(\bH),$ the number of inequivalent, irreducible representations with
  central character $\chi.$ 
\end{proposition}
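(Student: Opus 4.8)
The plan is to identify $\C K(\chi)/Rad$ with a space naturally attached to the equivariant derived category on the nilpotent-type pieces $\fg_2(\chi)$, and then invoke Theorem~\ref{t:0.1} to count irreducibles. The starting point is the geometric meaning of the basis $\C B(\chi)$: it indexes $G(\chi)$-orbits of Borel subalgebras containing $\chi$, equivalently the irreducible components of a ``Springer-type'' fiber. More precisely, following \cite{L2}, one realizes $\C K(\chi)$ (after the normalization $e_\chi=(1-v^2)^{-\rk\fg}$) as the Grothendieck group, with its $\ups$-grading by cohomological degree, of a certain category of $G(\chi)$-equivariant complexes built from the variety $\{(\fb',e)\colon \chi\in\fb',\ e\in\fb'_2\}$ fibered over $\fg_2(\chi)$. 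The bilinear form $(\ :\ )$ is then the geometric pairing coming from convolution (the $\tau$-function of \ref{sec:1.3} records exactly the relevant dimension shifts, as the equivalent description $(**)$ makes manifest), and $Rad$ is precisely the kernel of the map from this ``standard module'' span to the span of the simple perverse sheaves $IC(\overline{\CO},\C L)$.

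The key steps, in order: (1) Recall from \cite{L2} that the pairing $(\ :\ )$ computes the graded dimension of $\Hom$-spaces (or an Euler-characteristic-type pairing) between the objects $\pi_!\bQ$ indexed by $\C B(\chi)$ in the equivariant derived category $D_{G(\chi)}(\fg_2(\chi))$; this is where the normalization $e_\chi$ and identity (\ref{eq:1.5.3}) get used, ensuring the form is the correct symmetric geometric pairing rather than a twisted version. (2) Observe that the classes $[\fb']$ span the subcategory generated by the constituents of these pushforwards, and by the decomposition theorem each such pushforward decomposes into shifted copies of $IC(\overline\CO,\C L)$ for $\CO\in Orb_2(\chi)$ and $\C L$ of Springer type; hence the span of the $[\fb']$ modulo $Rad$ has dimension at most the number of such pairs $(\CO,\C L)$. (3) Conversely, show the pairing is nondegenerate on the span of the simple classes $[IC(\overline\CO,\C L)]$ — this is the self-duality/semisimplicity input: the geometric pairing restricted to the simples is, up to the sign and power of $v$ in (\ref{eq:1.5.3}), a nondegenerate form, because distinct $IC$-sheaves are orthogonal in the appropriate sense and each pairs nontrivially with itself (one can see this at $\ups=1$ where it reduces to a multiplicity count on the open orbit, cf.\ Corollary~\ref{c:0.2}). (4) Combine (2) and (3): $\dim \C K(\chi)/Rad$ equals the number of pairs $(\CO,\C L)$, which by Theorem~\ref{t:0.1} equals $\#Irr~mod_\chi(\bH)$.

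The main obstacle is step (3), the nondegeneracy on the simples — equivalently, showing that $Rad$ is \emph{exactly} the span of the ``non-IC'' part and not larger. This requires knowing that the passage from standard objects $\pi_!\bQ$ to simple perverse sheaves is ``unitriangular'' with respect to the closure order on orbits (which is available: $\overline\CO\setminus\CO$ is a union of smaller orbits, as noted before Theorem~\ref{t:0.1}) and that the induced form on the associated graded is nondegenerate. One packages this by choosing a basis of $\C K(\chi)$ adapted to the closure order, writing the Gram matrix of $(\ :\ )$ in block-triangular form, and checking the diagonal blocks (indexed by $(\CO,\C L)$) are invertible over $\bQ(\ups)$; the off-diagonal vanishing of the ``leading'' pairing $[\fb']$ against $IC$'s supported on strictly smaller orbits is exactly Corollary~\ref{c:0.2} read geometrically. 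All of the genuinely hard geometric input — the decomposition theorem, the Springer-type restriction on local systems, and the identification of the pairing — is already in \cite{L2} and \cite{Lu4}; the proof here is the bookkeeping that turns ``rank of the geometric Euler form'' into ``number of simple modules.''
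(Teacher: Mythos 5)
The paper gives no proof of this proposition: it is stated as a direct citation of \cite{L2}, so there is nothing in the text to compare your argument against step by step. Your sketch is a plausible high-level reconstruction of Lusztig's geometric proof, and it correctly isolates the actual content — that $Rad$ is exactly the kernel of the decomposition map from the span of the standard classes $[\fb']$ onto the span of the Springer-type $IC$-classes, so that $\dim\C K(\chi)/Rad$ counts pairs $(\CO,\C L)$, and Theorem~\ref{t:0.1} finishes. You are also candid that every hard geometric step (decomposition theorem, Springer-type restriction, identification of $(\ :\ )$ with the sheaf-theoretic pairing, and the invertibility of the relevant Gram blocks) is imported from \cite{L2}; that matches the paper's own treatment, which black-boxes exactly the same things.

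Two points deserve correction or caution. First, the phrase ``distinct $IC$-sheaves are orthogonal in the appropriate sense'' is not literally true: the $\Hom$/$\operatorname{Ext}$ pairing between distinct $IC$'s is in general nonzero. What is actually used — and what your subsequent block-triangular Gram matrix argument captures — is that, after ordering by orbit closure, the form is block-triangular across orbits with invertible diagonal blocks; in the paper's notation this is precisely Proposition~\ref{p:2.2} (\cite{L2}, 2.17) together with Lemma~\ref{l:2.3} (\cite{L2}, 1.11, 3.7), phrased there for the inductively constructed bases $\C Z_{\pm}$ rather than for $IC$-classes directly. Second, the parenthetical appeal to ``$\ups=1$ \ldots cf.\ Corollary~\ref{c:0.2}'' for the diagonal nonvanishing is shaky: with the normalization $e_\chi=(1-v^2)^{-\rk\fg}$ the form has poles at $\ups=\pm 1$, and specializing a $\bQ(\ups)$-valued form at a point can change its rank, so nondegeneracy over $\bQ(\ups)$ must be argued at the level of the Gram determinant, not at a specialization. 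Neither issue affects the overall strategy, but the argument as written is an outline deferring to \cite{L2}, not a self-contained proof.
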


\subsection{Examples}\label{sec:1.6} To illustrate the definitions so
far, we give  some examples.  In the following tables, for
simplicity, we will take the normalization factor $e_\chi=1$. 

\subsubsection{$\mathbf{A_2}$} The simple roots are
$\{\ep_1-\ep_2,\ep_2-\ep_3\},$ and let the simple reflections be
denoted by $s_1$ and $s_2$. Then
$W=\{1,s_1,s_2,s_1s_2,s_2s_1,s_1s_2s_1\}.$ We look at
$\chi=2\check\rho=(2,0,-2).$ The bilinear form is

\begin{center}
\begin{tabular}{l||l|l|l|l|l|l|}
&$1$ &$s_1s_2s_1$ &$s_1s_2$ &$s_2s_1$ &$s_1$ &$s_2$\\
\hline
\hline
$1$ &$1$ &$\ups^2$ &$-\ups$ &$-\ups$ &$-\ups$ &$-\ups$\\
\hline
$s_1s_2s_1$ &$\ups^2$ &$1$ &$-\ups$&$-\ups$&$-\ups$&$-\ups$\\
\hline
$s_1s_2$ &$-\ups$ &$-\ups$ &$1$ &$\ups^2$ &$\ups^2$ &$1$\\
\hline
$s_2s_1$ &$-\ups$  &$-\ups$ &$\ups^2$ &$1$ &$1$ &$\ups^2$\\
\hline
$s_1$ &$-\ups$ &$-\ups$ &$\ups^2$ &$1$ &$1$ &$\ups^2$\\
\hline
$s_2$ &$-\ups$ &$-\ups$ &$1$ &$\ups^2$ &$\ups^2$ &$1$\\
\hline
\end{tabular}\ .
\end{center}

In this case, $\dim(Rad)=2$, and a basis is given by $\{s_1-s_2s_1,\
s_2-s_1s_2\}.$  This is a particular case of section \ref{sec:3.1}.

\subsubsection{$\mathbf {C_2,\ \chi=(1,1)}$.} The simple roots are
$\{\ep_1-\ep_2,2\ep_2\}.$ We look at $\chi=(1,1),$ the middle
nilpotent element of the nilpotent orbit $(22)$ in $\fk{sp}(4,\bC).$
Then $W(\chi)=\{1,s_1\},$ and
$W/W(\chi)=\{[1],[s_2],[s_1s_2],[s_2s_1s_2]\}.$ The bilinear form is 

\begin{center}
\begin{tabular}{l||l|l|l|l|}
&$[1]$ &$[s_2]$ &$[s_1s_2]$ &$[s_2s_1s_2]$\\
\hline
\hline
$[1]$ &$1+\ups^{-2}$ &$-\ups^{-1}-\ups$ &$1+\ups^2$ &$-\ups-\ups^3$\\
\hline
$[s_2]$ &$-\ups^{-1}-\ups$ &$2$ &$-2\ups$ &$1+\ups^2$\\
\hline
$[s_1s_2]$ &$1+\ups^2$ &$-2\ups$ &$2$ &$-\ups^{-1}-\ups$\\
\hline
$[s_2s_1s_2]$ &$-\ups-\ups^3$ &$1+\ups^2$ &$-\ups^{-1}-\ups$
&$1+\ups^{-2}$\\
\hline
\end{tabular}\ .
\end{center}

In this example, the form is nondegenerate.

\subsection{}\label{sec:1.8} We have defined the space $\C K(\chi)$
equipped with a degenerate bilinear form. The final basic ingredient of the algorithm is an {\it
  induction} map. Let $\fp$ be a parabolic subalgebra of $\fg,$ such
that $\chi\in \fp.$ The parabolic $\fp$ is not necessarily standard with
respect to the fixed Borel $\fb$ (\ie the choice of positive roots $\Delta^+$).  Let $\fl$ denote the Levi component of
$\fp,$ and let $\fu_p$ be the nilradical. 

One can define $\C K_\fl(\chi)$,
$W_{\fl}(\chi)$ \etc similarly to the definitions for $\fg$ in the
previous sections. Let $proj:\fp\to\fl$ denote the
projection onto the Levi factor. 

The induction map is defined as
\begin{equation}\label{eq:1.8.1}
(*): \ \ind_{\fp}^{\fg}: \C B_{\fl}(\chi)\to \C B(\chi),\ \ind_{\fp}^{\fg}(\fb')=proj^{-1}(\fb').
\end{equation}
One can define the same map in terms of the Weyl group.
The roots in $\fu_p$ are a subset of
$\Delta,$ but not necessarily of $\Delta^+.$ Let $w_p$ be
a Weyl group of minimal length such that $w_p(\Delta(\fu_p))\subset
\Delta^+.$ Then
\begin{equation}\label{eq:1.8.2}
(**):\ \ind_{\fp}^{\fg}: \C B_{\fl}(\chi)\to \C B(\chi),\ \ind_{\fp}^{\fg}([w])=[w\cdot w_p^{-1}].
\end{equation}

\section{Bases}\label{sec:2} 

The goal is to construct two pairs of bases $(\C Z_+,\C U_+)$ and $(\C
Z_-,\C U_-)$ for $\C K(\chi)/Rad.$  
The definition is inductive. 
The construction of the bases $\C Z_+,\C U_+,$
respectively $\C Z_-,\C U_-$ is done in parallel in the space $\C K(\chi)$, so we will use the
subscript $\pm$ for simplicity when there is no risk of confusion.

In the end,
the change of bases matrix for the pair $(\C Z_+,\C U_+)$
(equivalently for $(\C Z_-,\C U_-)$) is the desired {\it multiplicity
  matrix}.

\subsection{}\label{sec:2.1} The standard modules with central character $\chi$ (and
so the sets $\C Z_\pm$ and $\C U_\pm$) are parameterized, as in theorem
\ref{t:0.1}, by $Orb_{\pm 2}(\chi)$, the $G(\chi)$-orbits on $\fg_{\pm 2}$ and local systems. \cite{L1} gives a
parameterization of the orbits in terms of certain parabolic
subalgebras of $\fg$. We recall next the parameterization of orbits in
$\fg_2.$ (The case of $\fg_{-2}$ is absolutely analogous.)
Let $e$ be a (nilpotent) representative of an orbit
$\CO=\CO_e$ of $G(\chi)$ in $\fg_2.$

By the graded version of the Jacobson-Morozov triple (\cite{L1}),
$e\in \fg_2$ can be
embedded into a Lie
triple $\{e,h,f\}$, such that $h\in \fh\subset\fg_0,$ and $f\in
\fg_{-2}.$ From the pair of semisimple elements $\chi$ and $h$, one
can  define two associated parabolic subalgebras $\fp_\pm$ as in
\cite{L1}.

Define a gradation of $\fg$ with respect to $h$ as well,
\begin{equation}\fg^r=\{y\in\fg: [h,y]=ry\},\ r\in\bZ,
\end{equation}
 and set 
\begin{equation}
\fg^r_t=\fg_t\cap\fg^r.
\end{equation}
 Then
\begin{equation}\fg=\bigoplus_{t,r\in\bZ}\fg^r_t.
\end{equation}
Set
\begin{align}\label{par1}&\fl=\bigoplus_{t=r}\fg_t^r,\quad
  \fu_-=\bigoplus_{t<r}\fg_t^r,\quad \fu_+=\bigoplus_{t>r}\fg_t^r,\\\notag
  &\fp_+=\fl\oplus\fu_+, \quad\fp_-=\fl\oplus\fu_-.\end{align}
Since we want to emphasize the nilpotent element $e$, we will write in
this section
$\fp^e=\fp_+$ and similarly $\fl^e,\fu^e.$  
Clearly, $\fh\subset\fg_0^0\subset\fl^e.$

\begin{definition}
One says that $\chi$ is {\it rigid} for a Levi subalgebra $\fl,$ if
$\chi$ is congruent modulo the center $\fz(\fl)$ to a middle element of a
nilpotent orbit in $\fl$. 
\end{definition}

We record the important properties of $\fp^e.$ The centralizer of an element $t$ in a group $Q$ will be denoted below by $Z_Q(t),$ and its group of components by $A_Q(t).$

\begin{proposition}[\cite{L1}]\label{p:2.1} Consider the subalgebra $\fp^e$ defined by
  (\ref{par1}), and let $P^e$ be the corresponding parabolic subgroup.

\begin{enumerate}

\item $\fp^e$ depends only on $e$ and not on the entire Lie triple
  $\{e,h,f\}.$

\item $\chi$ is {rigid} for $\fl^e.$ 

\item $e$ is an element of the
  open $L^e(\chi)$-orbit in $\fl^e_2.$ 

\item The $P(\chi)^e$-orbit of $e$ in $\fp^e_{2}$ is open, dense in $\fp^e.$ 

\item  $Z_{G(\chi)}(e)\subset P^e$.

\item The inclusion
  $Z_{L^e(\chi)}(e)\subset Z_{G(\chi)}(e)$ induces an isomorphism
  of the component groups \begin{equation}A_{L^e(\chi)}(e)\cong A_{G(\chi)}(e).\end{equation}
\end{enumerate}

\end{proposition}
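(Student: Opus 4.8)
\emph{Strategy.} The plan is to reduce everything to two inputs: the graded Jacobson--Morozov theorem (existence \emph{and} conjugacy of normal triples through $e$) and elementary $sl_2$-representation theory, organised around the bigrading of $\fg$ by the commuting pair $(\chi,h)$. Concretely: fix $e\in\CO$ and, by graded Jacobson--Morozov, a Lie triple $\{e,h,f\}$ with $h\in\fh\subset\fg_0$ and $f\in\fg_{-2}$; work with the bigrading $\fg=\bigoplus_{t,r}\fg^r_t$ for the commuting semisimple operators $[\chi,\cdot],[h,\cdot]$ (so $e\in\fg^2_2$, $f\in\fg^{-2}_{-2}$, $h,\chi\in\fg^0_0$). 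Two observations are the backbone. First, $\chi-h\in\fh$ acts on $\fg^r_t$ by $t-r$, so $\fl^e=\bigoplus_{t=r}\fg^r_t$ equals $\fz_\fg(\chi-h)$, is the Levi factor of $\fp^e$, satisfies $\chi-h\in\fz(\fl^e)$, and contains $e,h,f$. Second, inside $\fg_0=\fz_\fg(\chi)=\operatorname{Lie}(G(\chi))$ the subalgebra $\fp^e\cap\fg_0$ coincides with the classical Jacobson--Morozov parabolic subalgebra of $\fg_0$ attached to $h$, while $\fl^e\cap\fg_0=\fg^0_0$ and $\fl^e_2=\fg^2_2$. Also the $sl_2$ spanned by $\{e,h,f\}$ preserves each block $\fg[d]:=\bigoplus_{t-r=d}\fg^r_t$, with $\chi$-grading on $\fg[d]$ equal to the $h$-grading shifted by $d$; in particular $\fl^e=\fg[0]$ is an $sl_2$-module whose grading is the $h$-grading, with nilpositive element $e$.

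\emph{Parts (2), (3), (4).} Since $\{e,h,f\}\subset\fl^e$ is an $sl_2$-triple with nilpositive $e$, $h$ is a middle element of the nilpotent $\fl^e$-orbit of $e$; as $\chi\equiv h\pmod{\fz(\fl^e)}$, this is (2). For (3) and (4) I would use only the $sl_2$-fact that $[e,\cdot]\colon V_j\to V_{j+2}$ is surjective for $j\ge 0$. The differential at $e$ of $L^e(\chi)\to\fl^e_2$ is $x\mapsto[x,e]\colon\fg^0_0\to\fg^2_2$, surjective by this fact, so the $L^e(\chi)$-orbit of $e$ is open in the irreducible variety $\fl^e_2$; this is (3). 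Likewise the differential at $e$ of $P^e(\chi)\to\fp^e_2$ is $x\mapsto[x,e]\colon\fp^e\cap\fg_0\to\fp^e_2$; grouping by $h$-weight, on the bottom weight it is (3) and on higher weights it is $[e,\cdot]\colon V_j\to V_{j+2}$ with $j>0$, again surjective, so the orbit of $e$ is dense in $\fp^e_2$; this is (4).

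\emph{Parts (5), (1), (6).} For (5) I would invoke the classical containment $Z_G(e)\subseteq Q$ with $Q$ the Jacobson--Morozov parabolic subgroup of $G$ attached to $h$, and intersect with $G(\chi)=Z_G(\chi)$: both $Q\cap G(\chi)$ and $P^e\cap G(\chi)$ are parabolic subgroups of the connected reductive group $G(\chi)$ with the same Lie algebra (the Jacobson--Morozov parabolic of $\fg_0$ attached to $h$), hence equal, so $Z_{G(\chi)}(e)=Z_G(e)\cap G(\chi)\subseteq Q\cap G(\chi)=P^e\cap G(\chi)\subseteq P^e$; this gives (5), and shows $Z_{G(\chi)}(e)\subseteq P^e(\chi):=P^e\cap G(\chi)$. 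Given (5), part (1) follows: for another admissible triple $\{e,h',f'\}$ the graded Kostant--Mal'cev conjugacy theorem (same argument as classically: $h-h'\in\fz_{\fg_0}(e)$, then integrate an appropriate nilpotent) gives $g\in Z_{G(\chi)}(e)^\circ$ with $\operatorname{Ad}(g)(h,f)=(h',f')$; as $\operatorname{Ad}(g)$ fixes $\chi$ it carries the $(\chi,h)$-bigrading to the $(\chi,h')$-one, so $\operatorname{Ad}(g)\fp^e[h]=\fp^e[h']$, while $g\in P^e[h]=N_G(P^e[h])$ by (5) forces $\operatorname{Ad}(g)\fp^e[h]=\fp^e[h]$; hence $\fp^e[h']=\fp^e[h]$. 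For (6), (5) places $Z_{G(\chi)}(e)$ inside $P^e(\chi)=L^e(\chi)\ltimes U^e(\chi)$; since $e\in\fg^2_2\subset\fl^e$, comparing $\fl^e$-components in $\operatorname{Ad}(g)e=e$ for $g=lu$ (using $\operatorname{Ad}(u)e\in e+\fu^e$ and that $L^e(\chi)$ stabilises $\fl^e$ and $\fu^e$) shows $l\in Z_{L^e(\chi)}(e)$, so the Levi projection restricts to a retraction $Z_{G(\chi)}(e)\twoheadrightarrow Z_{L^e(\chi)}(e)$ of the inclusion with kernel the connected unipotent group $Z_{U^e(\chi)}(e)$; hence $Z_{G(\chi)}(e)\cong Z_{L^e(\chi)}(e)\ltimes Z_{U^e(\chi)}(e)$ and the inclusion induces an isomorphism on component groups, which is (6).

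\emph{Expected main obstacle.} No individual step is deep; the content of the proof sits in the foundations, and the hard part will be the graded Jacobson--Morozov theorem itself, especially the \emph{conjugacy} of normal triples through $e$ under $Z_{G(\chi)}(e)^\circ$, on which (1) rests. I would prove it by transcribing Kostant's classical argument into the $\chi$-graded setting; the delicate point is to check that the exponential / cohomology-vanishing steps survive the presence of the extra grading. A secondary care point is to fix consistently the orientation of $\fp^e$ (which of $\fp_\pm$ is meant), so that $\fp^e\cap\fg_0$ really is the Jacobson--Morozov parabolic of $\fg_0$ attached to $h$, since (4)--(6) all use that identification.
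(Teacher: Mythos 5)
The paper does not prove this proposition; it cites Lusztig \cite{L1} and uses the statement as a black box, so there is no in-text argument to compare against. Your sketch is the standard route and, as far as I can tell, close to Lusztig's: organise everything by the $(\chi,h)$-bigrading, get (2)--(4) from $sl_2$ weight counting on the blocks $\fg[d]=\bigoplus_{t-r=d}\fg^r_t$, deduce (5) from the ungraded $Z_G(e)\subseteq Q$ once $\fp^e\cap\fg_0$ is identified with the Jacobson--Morozov parabolic of $\fg_0$ at $h$, get (1) from graded Kostant conjugacy, and get (6) from the Levi retraction with connected unipotent kernel $Z_{U^e(\chi)}(e)$. You also correctly localise where the real content sits (graded Jacobson--Morozov and its conjugacy statement).

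One correction to how you present your ``secondary care point'': it is not merely a matter of caution but a genuine inconsistency that your proof must resolve. With the paper's stated convention $\fp^e=\fp_+=\bigoplus_{t\ge r}\fg^r_t$ one gets $\fp^e\cap\fg_0=\bigoplus_{r\le 0}\fg^r_0$, which is the \emph{opposite} of the Jacobson--Morozov parabolic of $\fg_0$ at $h$. Then $Z_{\fg_0}(e)\subseteq\bigoplus_{r\ge 0}\fg^r_0$ is transverse to $\fp^e\cap\fg_0$, so (5) fails as written; the differential $[\,\cdot\,,e]\colon \fg^r_0\to\fg^{r+2}_2$ in (4) is required for $r\le 0$, where $[e,\cdot]\colon V_j\to V_{j+2}$ is not surjective; and Corollary \ref{c:2.1} already gives the wrong dimension for the orbit labelled $(21)$ in $A_2$ at $\chi=2\check\rho$ (one obtains $2$ instead of $1$). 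All of this is repaired exactly when $\fp^e=\bigoplus_{t\le r}\fg^r_t$, i.e.\ the inequalities defining $\fu_{\pm}$ in (\ref{par1}) should be swapped (equivalently $\fp^e=\fp_-$ in the paper's notation). Your proof should state this choice outright, since (3)--(6) all depend on it; once it is fixed, the rest of your argument goes through, modulo the graded Jacobson--Morozov/Kostant theorem you already flag as the foundational input.
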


\noindent{\bf Remark.} Note that, since $\chi$ is rigid in $\fl^e,$ the component group
$A_{L^e(\chi)}(e)$ is the component group corresponding to a nilpotent orbit in
$\fl^e$, and these are all well-known (see \cite{Ca}). In conclusion, part (6) of
  proposition \ref{p:2.1} gives an effective way to compute the
  component groups for the  orbits in $Orb_2(\chi).$

\medskip

In addition, an immediate corollary of (4) and (5) in  proposition \ref{p:2.1} is a dimension
formula for the orbits in $Orb_2(\chi).$

\begin{corollary}[\cite{L2}]\label{c:2.1} For an orbit $\CO_e\in
  Orb_2(\chi),$ \begin{equation}\dim \CO_e=\dim
    \fp_2^e-\dim\fp_0^e+\dim\fg_0,\end{equation}
where $\fp^e_i=\fp^e\cap\fg_i,$ $i=0,2.$
\end{corollary}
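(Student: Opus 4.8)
The plan is to derive the dimension formula directly from parts (4) and (5) of Proposition~\ref{p:2.1}, using the standard orbit–stabilizer bookkeeping for the $G(\chi)$-action, combined with the fact that $Z_{G(\chi)}(e)$ sits inside $P^e$. First I would write $\dim\CO_e=\dim G(\chi)-\dim Z_{G(\chi)}(e)$. By part (5), $Z_{G(\chi)}(e)\subset P^e$, and since this centralizer is automatically contained in $G(\chi)$ we in fact have $Z_{G(\chi)}(e)=Z_{P(\chi)^e}(e)$; hence $\dim Z_{G(\chi)}(e)=\dim P(\chi)^e-\dim\big(P(\chi)^e\cdot e\big)$, where the orbit is taken inside $\fp^e_2$. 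By part (4) this $P(\chi)^e$-orbit is open dense in $\fp^e_2$, so $\dim\big(P(\chi)^e\cdot e\big)=\dim\fp^e_2$. Substituting, $\dim\CO_e=\dim G(\chi)-\dim P(\chi)^e+\dim\fp_2^e$.

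It remains to compute $\dim G(\chi)-\dim P(\chi)^e$ in terms of the gradings. Here I would use that $P^e$ is a parabolic of $G$, so its Lie algebra $\fp^e$ together with the opposite nilradical $\fu_-^e$ exhausts $\fg$; intersecting with $\fg_0=\fg(\chi)$ (the Lie algebra of $G(\chi)$) gives $\fg_0=\fp^e_0\oplus(\fu_-^e)_0$. The key point, which I would verify from the explicit definition $\fl^e=\bigoplus_{t=r}\fg^r_t$, $\fu_\pm^e=\bigoplus_{t\gtrless r}\fg^r_t$ in (\ref{par1}), is that the pieces of $\fg_0$ lying in $\fu_+^e$ and in $\fu_-^e$ are interchanged by the Lie-triple symmetry (the involution swapping $e\leftrightarrow f$, equivalently $h\leftrightarrow -h$, which preserves $\chi$ hence $\fg_0$); this symmetry sends $\fg^r_0$ to $\fg^{-r}_0$, so $\dim(\fu_+^e\cap\fg_0)=\dim(\fu_-^e\cap\fg_0)$. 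Therefore $\dim\fg_0-\dim\fp^e_0=\dim(\fu_-^e\cap\fg_0)=\dim(\fu_+^e\cap\fg_0)=\dim\fg_0-\dim\fp^e_0$; more to the point, this lets me pass between $\dim G(\chi)-\dim P(\chi)^e$ and $-\big(\dim\fg_0-\dim\fp_0^e\big)$... wait—let me state it cleanly: $\dim G(\chi)-\dim P(\chi)^e=\dim\fg_0-\dim\fp_0^e$, since $P(\chi)^e$ is exactly the subgroup of $G(\chi)$ with Lie algebra $\fp^e_0$, and $\dim\fg_0-\dim\fp_0^e$ equals $\dim(\fu_-^e)_0$. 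Combining with the previous paragraph gives $\dim\CO_e=\dim\fp_2^e-(\dim\fg_0-\dim\fp_0^e)+\dim\fg_0=\dim\fp_2^e-\dim\fp_0^e+\dim\fg_0$, as claimed.

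I should double-check one subtlety: the orbit $\CO_e$ in the statement is the $G(\chi)$-orbit on $\fg_2$, and the orbit-stabilizer formula requires $Z_{G(\chi)}(e)$, not $Z_{G(\chi)}(e)^0$; but dimensions of a group and its identity component agree, so the component-group discrepancy recorded in part (6) is irrelevant for this count. The one genuine input that needs care is the identification $\dim\fg_0-\dim\fp_0^e=\dim(\fu_-^e)_0$ together with the claim that $P(\chi)^e$ is a parabolic (or at least has Lie algebra $\fp_0^e$) inside $G(\chi)$; this is where I would quote the construction of $\fp^e$ from \cite{L1} and the fact, implicit there, that $P^e\cap G(\chi)$ is connected with Lie algebra $\fp^e\cap\fg_0$.

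The main obstacle I anticipate is purely bookkeeping: making sure the two uses of "$\dim\fg_0$" are the genuine $\dim G(\chi)$ and that no factor of the center or disconnectedness of $G(\chi)$ or $P(\chi)^e$ slips in. Once the dictionary "$\dim G(\chi)=\dim\fg_0$, $\dim P(\chi)^e=\dim\fp_0^e$" is pinned down, the formula is an immediate consequence of parts (4) and (5); part (6) plays no role in the dimension count and is listed in the proposition only for the separate purpose of computing component groups.
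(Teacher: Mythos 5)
Your argument is correct and is exactly the ``immediate'' computation the paper has in mind when it cites parts (4) and (5) of Proposition~\ref{p:2.1}: orbit–stabilizer for the $G(\chi)$-action on $\fg_2$, the identification $Z_{G(\chi)}(e)=Z_{P(\chi)^e}(e)$ coming from (5), the open-density of the $P(\chi)^e$-orbit in $\fp_2^e$ from (4), and the dictionary $\dim G(\chi)=\dim\fg_0$, $\dim P(\chi)^e=\dim\fp_0^e$. One arithmetic slip to correct before you hand this in: your displayed middle expression $\dim\fp_2^e-(\dim\fg_0-\dim\fp_0^e)+\dim\fg_0$ actually simplifies to $\dim\fp_2^e+\dim\fp_0^e$; the correct substitution into $\dim\CO_e=\dim G(\chi)-\dim P(\chi)^e+\dim\fp_2^e$ is simply $(\dim\fg_0-\dim\fp_0^e)+\dim\fp_2^e$, which is already the stated formula, so the Lie-triple-symmetry digression (which you yourself abandoned) and the extra $+\dim\fg_0$ are both unnecessary.
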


\begin{definition} A parabolic subgroup $P$ with Lie algebra $\fp$ is
  called {\it good} for $\chi$ if $\fp=\fp^e$ for some
  nilpotent $e\in \fg_2$ (notation as in (\ref{par1})), and such that
  it satisfies (2) in  proposition \ref{p:2.1}.
\end{definition}

Let $\C P(\chi)$ denote the set of good parabolic
subgroups for $\chi.$ The parameterization of $Orb_2(\chi)$ is as follows.

\begin{theorem}[\cite{L1}]\label{t:2.5}
The map $\CO_e\mapsto P^e$ defined above
induces a bijection between $Orb_2(\chi)$ and $G(\chi)$-conjugacy
classes in $\C P(\chi)$.
\end{theorem}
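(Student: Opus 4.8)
The plan is to establish that the assignment $\CO_e\mapsto P^e$ is a well-defined map on $Orb_2(\chi)$ valued in $G(\chi)$-conjugacy classes of good parabolics, and then to produce an explicit inverse. \emph{Well-definedness} follows from proposition \ref{p:2.1}(1): $\fp^e$ depends only on $e$, not on the chosen Lie triple $\{e,h,f\}$; and if $e'=Ad(g)e$ with $g\in G(\chi)$, then choosing the triple $\{Ad(g)e,Ad(g)h,Ad(g)f\}$ (which is again adapted, since $Ad(g)$ fixes $\chi$ and hence preserves the $\chi$-grading) shows $\fp^{e'}=Ad(g)\fp^e$. So $\CO_e\mapsto P^e$ descends to a map into $G(\chi)$-conjugacy classes in $\C P(\chi)$, and by the very definition of $\C P(\chi)$ together with proposition \ref{p:2.1}(2) the image lands in $\C P(\chi)$.

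\emph{Surjectivity.} Let $P\in\C P(\chi)$ with Levi $\fl$. By definition of a good parabolic, $P=\fp^e$ for some nilpotent $e\in\fg_2$, so it is literally in the image; alternatively, and more usefully for the inverse, given $P$ with Levi $\fl$ for which $\chi$ is rigid, pick $e$ to be a representative of the open $L(\chi)$-orbit in $\fl_2$ (which exists since $\chi$ is rigid in $\fl$, so $\fl_2$ is a prehomogeneous $L(\chi)$-space with a dense orbit — this is where rigidity is used, cf.\ the remark after proposition \ref{p:2.1}). Then by parts (3) and (4) of proposition \ref{p:2.1} this $e$ recovers $\fp^e=\fp$.

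\emph{Injectivity.} This is the substantive point. Suppose $\CO_e$ and $\CO_{e'}$ are orbits in $Orb_2(\chi)$ with $\fp^e$ and $\fp^{e'}$ conjugate under $G(\chi)$; after replacing $e'$ by a $G(\chi)$-translate we may assume $\fp^e=\fp^{e'}=:\fp$, with common Levi $\fl$. By proposition \ref{p:2.1}(3), both $e$ and $e'$ lie in the open $L(\chi)$-orbit of $\fl_2$. Since that orbit is dense and irreducible, and $G(\chi)$ acts so that $L(\chi)$ preserves $\fl_2$, there is a unique open $L(\chi)$-orbit, hence $e'=Ad(\ell)e$ for some $\ell\in L(\chi)\subset G(\chi)$, giving $\CO_e=\CO_{e'}$. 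So the map is injective.

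\emph{Main obstacle.} The delicate step is injectivity, specifically the claim that $\fl_2$ has a \emph{unique} open $L(\chi)$-orbit and that the two normalizations of $\fp^e$ associated to $e,e'$ truly coincide rather than merely being abstractly isomorphic; this rests on the construction in (\ref{par1}) being canonical (proposition \ref{p:2.1}(1)) and on Kac's prehomogeneity result (\cite{Ka}) applied inside $\fl$, which requires checking that $\chi$ being rigid for $\fl$ puts us in the situation where $\fl_2$ is prehomogeneous with a single dense orbit. The rest is bookkeeping with the $G(\chi)$-action on triples and on parabolics. I would also double-check the compatibility of the two descriptions $(*)$ and $(**)$ is not needed here, since theorem \ref{t:2.5} is purely a statement about orbits and parabolics.
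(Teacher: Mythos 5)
The paper cites Theorem \ref{t:2.5} from \cite{L1} and gives no proof, so there is no in-paper argument to compare yours against; I will evaluate the sketch on its own merits.

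Your structure is sound. Well-definedness follows from proposition \ref{p:2.1}(1) together with the obvious $G(\chi)$-equivariance of the construction (\ref{par1}), as you note. Surjectivity is essentially tautological: $\C P(\chi)$ is \emph{defined} as the set of parabolics of the form $\fp^e$ satisfying proposition \ref{p:2.1}(2), so every element of $\C P(\chi)$ is in the image by fiat. For injectivity, your reduction to a common $\fp$ with Levi $\fl$, followed by the observation that both $e$ and $e'$ lie in the open $L(\chi)$-orbit of $\fl_2$ (proposition \ref{p:2.1}(3)) and that a dense orbit in the irreducible variety $\fl_2$ is unique, so $e'=Ad(\ell)e$ with $\ell\in L(\chi)\subset G(\chi)$, is the right argument.

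One correction to the emphasis, however: rigidity of $\chi$ for $\fl$ is \emph{not} what makes $\fl_2$ prehomogeneous. The existence of a dense $L(\chi)$-orbit in $\fl_2$ holds for any Levi $\fl$ containing $\chi$ by the Kac/Vinberg theory of $\bZ$-graded Lie algebras (\cite{Ka}), which the introduction already invokes for $\fg_n(\chi)$; it has nothing to do with whether $\chi$ is a middle element modulo $\fz(\fl)$. Rigidity is a separate structural constraint that is built into the definition of a good parabolic (via proposition \ref{p:2.1}(2)), and whose role is to ensure that the $e$ one picks in the dense orbit of $\fl_2$ has $\fl^e=\fl$ rather than a smaller Levi --- that is, to make the assignment $\fp\mapsto$ (open orbit in $\fl_2$) actually invert $\CO_e\mapsto\fp^e$. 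Also, the remark after proposition \ref{p:2.1} that you cite concerns component groups, not prehomogeneity. Finally, justifying ``$\fp^e=\fp$'' by ``(3) and (4) of proposition \ref{p:2.1}'' is a little loose: those parts describe the output of the construction $e\mapsto\fp^e$, and what you actually need is that if $e$ lies in the unique open $L(\chi)$-orbit of $\fl_2$ for a good $\fp$, then $\fp^e$ is $L(\chi)$-conjugate to $\fp$ and hence equals $\fp$ since $L$ normalizes $\fp$. None of this breaks the argument, but the accounting of where each hypothesis enters should be tightened.
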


\subsection{}\label{sec:2.1a} Perhaps, it is better to think that $Orb_{\pm 2}(\chi)$
are parameterized by a set of pairs $(\fp,h),$ where $h$ is a middle
element of a nilpotent orbit in $\fl$, and $\fp$ is a good
parabolic. Let us call $\C E(\chi)$ this parameter set.

In general, for computations, we will apply the following equivalent
(but inelegant) procedure to determine $\C E(\chi)$. Let $\fl$ be a standard Levi subalgebra
(corresponding to a subset of the simple roots $\Pi$), and let $h$ be
a middle element of a Lie triple $\{e,h,f\}$ for $\fl$, assumed dominant for
$\Delta^+(\fl).$ For a given $\fg,$ there are finitely many pairs
$(\fl,h)$ like this. 
Let $\fz(e,h,f)$
denote the centralizer of $\{e,h,f\}$ in $\fg.$ 
If
\begin{equation}\label{eq:2.1.1}
(**): \text{there exists }w\in W \text{ such that }w(h+\nu_0)=\chi
\text{ for some }\nu_0 \in \fz(e,h,f)\cap\fh,
\end{equation}
then we set 
\begin{equation}
s=w\cdot h,
\end{equation}
and we define the two parabolic subalgebras $\fp_{s,-}$ and $\fp_{s,+}$
associated to $s,\chi$ as in (\ref{par1}), with common Levi subalgebra
$\fl_s=\fp_{s,+}\cap\fp_{s,-}.$ The pair
$(\fp_{s,+},s)$ parameterizes an orbit in $Orb_2(\chi)$, and similarly
$(\fp_{s,-},s)$ parameterizes an orbit in $Orb_{-2}(\chi).$ This is
how all the orbits are indexed, in other words, our sets $\C E(\chi)$
are formed of such pairs $(s,\fp_{s,\pm})$. Note, that by corollary \ref{sec:2.1},
one can compute the dimension of the associated orbit at once.

\subsection{}\label{sec:2.3} We retain the notation from the previous subsections.

Consider $\CO\in Orb_{\pm 2}(\chi)$, and let $\fl_s,$ $\fp_{s,+}$,
$\fp_{s,-}$ be the corresponding
subalgebras defined in section \ref{sec:2.1a}. The
bases $\C Z_{\pm}$ and $\C U_{\pm}$ are partitioned as:
\begin{equation}\label{eq:2.2.1}
\C Z_{\pm}=\displaystyle{\sqcup_{(s,\fp_{s,\pm})\in\C E(\chi)}} \C Z_{\pm}(\CO),\quad \C U_{\pm}=\displaystyle\sqcup_{(s,\fp_{s,\pm})\in\C E(\chi)} \C U_{\pm}(\CO).
\end{equation}

 If
$\fl_s=\fg,$ then $\CO=\CO_m$ is necessarily the unique maximal nilpotent 
orbit in $Orb_{\pm 2}(\chi).$

Assume that $\CO\neq\CO_m.$ Then $\fl_s$ is a proper Levi subalgebra
of $\fg.$ By construction, there exists a Lie triple $(e',s,f')$ of
$\CO$, such that $(e',s,f')\subset \fl_s.$ Let $\CO_{\fl_s}$ denote
the nilpotent orbit of $e'$ in $\fl_s.$ By induction, we can assume
that the bases $\C Z^{\fl_s}_{\pm}(\CO_{\fl_s})$ corresponding to
central character $s$ are constructed for $\fl_s.$ 

Then
\begin{equation}\label{eq:2.2.2}
\C Z_+(\CO)=\ind_{\fp_{s,+}}^\fg(\C Z_+^{\fl_s}(\CO_{\fl_s})),\quad
\C Z_-(\CO)=\ind_{\fp_{s,-}}^\fg(\C Z_-^{\fl_s}(\CO_{\fl_s})).
\end{equation}

We recall that the elements in each set $\C Z_\pm(\CO)$ are parameterized by
certain local systems, or equivalently certain representations of the group of components $\widehat
A_{G}(\chi,e).$ 

\begin{proposition}[\cite{L2},2.17]\label{p:2.2} If $\CO\neq\CO',$ and if $(\xi,\xi')\in\C
  Z_+(\CO)\times\C Z_+(\CO')$ or  $(\xi,\xi')\in\C
  Z_-(\CO)\times\C Z_-(\CO')$, then 
\begin{align}\label{eq:2.2.3}
(\xi:\xi')=0.
\end{align}
\end{proposition}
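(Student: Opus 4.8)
The plan is to reduce the vanishing statement to a geometric orthogonality coming from the construction of the $\C Z_\pm$ as induced bases, together with the support condition from Proposition \ref{p:2.1}. First I would set up the inductive framework: by the partition (\ref{eq:2.2.1}) and the definition (\ref{eq:2.2.2}), any element $\xi \in \C Z_+(\CO)$ is of the form $\ind_{\fp_{s,+}}^\fg(\eta)$ for some $\eta$ in the Levi basis $\C Z_+^{\fl_s}(\CO_{\fl_s})$, and likewise $\xi' = \ind_{\fp_{s',+}}^\fg(\eta')$. The key point is that the support of the ``standard-module class'' attached to $\xi$ — in the geometric picture of Theorem \ref{t:0.1} — is (the closure of) the single orbit $\CO$, and similarly for $\xi'$. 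So the bilinear form $(\xi : \xi')$ should be computable as an intersection-type pairing of classes supported on $\overline\CO$ and $\overline{\CO'}$ respectively.

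\textbf{Main steps.} The argument I would carry out has three stages. (i) Translate the combinatorial formula for $(\ :\ )$ in (\ref{sec:1.5}) into the geometric incidence count of \cite{L2}: $e_\chi^{-1}([\fb']:[\fb''])$ counts, with signs $(-\ups)^{\tau(\Omega)}$, the $G(\chi)$-orbits $\Omega$ on pairs of Borels lying over $[\fb']$ and $[\fb'']$. Under induction $\ind_{\fp_{s,+}}^\fg$, by the Weyl-group description (\ref{eq:1.8.2}) the element $\xi = \ind(\eta)$ is a sum of cosets $[w\cdot w_p^{-1}]$ with $w$ ranging over the support of $\eta$; so $(\xi : \xi')$ expands into a sum of pairings indexed by pairs of parabolics $\fp_{s,+}, \fp_{s',+}$. (ii) Invoke the property (5) of Proposition \ref{p:2.1}, $Z_{G(\chi)}(e) \subset P^e$, which forces the standard module $X_\xi$ to be, geometrically, $\ind$ of a module from the Levi $\fl^e$ that is itself supported on the open orbit. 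The orthogonality $\CO \neq \CO'$ then follows because the incidence variety relevant to $(\xi : \xi')$ has strictly smaller dimension than what a nonzero pairing would require — this is where one uses the dimension formula of Corollary \ref{c:2.1} and the fact that $\overline\CO \setminus \CO$ consists only of orbits of strictly smaller dimension. (iii) Conclude by matching degrees in $\ups$: the leading term of $(\xi : \xi')$ would have to come from the ``diagonal'' incidence, which is empty when $\CO \neq \CO'$, while all other contributions cancel in pairs under the symmetry (b)/(c) of Lemma \ref{l:1.3}. Since everything is an equality in $\bQ(\ups)$, vanishing of every coefficient gives (\ref{eq:2.2.3}).

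\textbf{Where the difficulty lies.} The main obstacle is stage (ii): making precise the claim that ``$(\xi:\xi')$ is a geometric pairing of classes supported on $\overline\CO$ and $\overline{\CO'}$'' and extracting the dimension estimate that kills it. The bilinear form as defined in (\ref{sec:1.5}) is a combinatorial gadget on $\C K(\chi)$, and the bridge to intersection cohomology / standard modules is exactly the content of \cite{L2}; I would need to cite the relevant compatibility — that the induced basis vectors $\C Z_\pm(\CO)$ represent, in $\C K(\chi)/Rad$, the classes of the standard modules $X_\xi$ with $\xi$ attached to $\CO$ — and then the statement becomes: distinct orbits give pairwise-orthogonal standard classes under this form. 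That last assertion is plausibly already isolated as \cite{L2}, 2.17 (which the proposition cites), in which case the ``proof'' is largely a matter of unwinding our combinatorial normalization (the factor $e_\chi$, the sign $\varepsilon$) and checking it is consistent with Lusztig's; the genuinely new content on our side is only the bookkeeping that the induction map (\ref{eq:1.8.2}) respects the orbit-stratification, which is immediate from Theorem \ref{t:2.5}. So I expect the write-up to be short: cite \cite{L2}, 2.17 for the core vanishing, and add a paragraph reconciling conventions.
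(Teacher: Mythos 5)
The paper offers no proof for this proposition: it is stated with the attribution to \cite{L2}, 2.17, and nothing more. Your final paragraph, which recommends citing \cite{L2}, 2.17 (after reconciling conventions and noting that the induction (\ref{eq:1.8.2}) respects the orbit stratification), matches the paper's approach exactly, and that is the right call.

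That said, the preliminary sketch in stages (ii) and (iii) would not stand on its own as a proof. Stage (ii) conflates the orthogonality with a dimension estimate, which cannot be correct: the statement must hold also when $\CO'\subsetneq\overline\CO$, where the dimensions differ and a crude incidence-count bound could at best control the degree of $(\xi:\xi')$ in $\ups$, not force it to vanish identically as a Laurent polynomial. This nested case is precisely where a naive ``classes supported on the two closures don't overlap'' argument fails. Stage (iii) misreads Lemma \ref{l:1.3}: part (b) is the right-$w_0$ shift invariance of $\tau$, and part (c) encodes identity (\ref{eq:1.5.3}) relating $(\sigma(\xi):\xi')$ to a $\beta$-twist of the form, which is used to deduce $\beta(Rad)=Rad$; neither provides a mechanism by which the terms of $(\xi:\xi')$ ``cancel in pairs.'' The actual content of \cite{L2}, 2.17 is a structural property of the inductively built bases $\C Z_\pm$ coming from an analysis of the supports of the parabolically induced complexes, not from a degree comparison or from the $\tau$-symmetries. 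So the honest write-up is, as you conclude, just the citation.
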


\subsection{} 
Let us denote 
\begin{equation}\label{eq:2.3.1}
\C Z'_{\pm}=\C Z_\pm\setminus\C Z_\pm(\CO_m), \quad \C U'_{\pm}=\C
U_\pm\setminus\C U_\pm(\CO_m). 
\end{equation}

The multiplicity matrix computed by the algorithm is a matrix with
coefficients in $\bZ[\ups]$,
\begin{align}\label{eq:matrix}
\C N=\left(\begin{tabular}{c|c} $\C N_{1,1}$&$\C N_{1,2}$\\\hline$ \C
    N_{2,1}$&$\C N_{2,2}$\end{tabular}\right),
\end{align}
where 

\begin{enumerate}
\item $\C N_{1,1}$ is an upper unitriangular matrix of size $\#\C
  Z_{\pm}'\times \#\C Z_{\pm}'$ which will be computed in
  equation (\ref{eq:2.3.6}), 
\item $\C N_{1,2}$ is a matrix of size $\#\C Z_{\pm}'\times \#\C
Z_{\pm}(\CO_m)$ computed in equation (\ref{eq:2.4.4}), 
\item $\C N_{2,1}$ is the zero matrix of size $ \#\C
Z_{\pm}(\CO_m)\times \#\C Z_{\pm}',$
\item $\C N_{2,2}$ is the identity matrix of size $ \#\C
Z_{\pm}(\CO_m)\times \#\C
Z_{\pm}(\CO_m).$ 
\end{enumerate}

\medskip

The sets $\C Z'_{\pm}$ were constructed by induction in section \ref{sec:2.3}. One sets a partial
ordering $\le$ on $\C Z'_{\pm}$ given by the dimensions of the
corresponding orbits. In this order, the unique element in $\C Z_\pm (0)$
is the minimal element.

Now we explain
the construction of $\CU'_{\pm}$. 
Define the matrices 
\begin{align}\label{eq:2.3.2}
\C M_\pm=((\xi:\xi'))_{\xi,\xi'\in\C Z'_{\pm}}.
\end{align}
By proposition \ref{p:2.2}, these matrices are block-diagonal, with
blocks of sizes $\#\C Z_\pm(\CO).$ 

\begin{lemma}[\cite{L2},1.11,3.7]\label{l:2.3} The matrices $\C
  M_{\pm}$ are invertible.
\end{lemma}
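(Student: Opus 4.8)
The plan is to reduce the invertibility of each $\C M_\pm$ to the nondegeneracy of the bilinear form $(\ :\ )$ on the subspace of $\C K(\chi)$ spanned by the standard modules attached to non-maximal orbits, modulo the radical. By Proposition~\ref{p:2.2}, $\C M_\pm$ is block-diagonal, with one block for each $\CO\neq\CO_m$; hence it suffices to show each block $((\xi:\xi'))_{\xi,\xi'\in\C Z_\pm(\CO)}$ is invertible. First I would recall that, by the proposition of section~\ref{sec:1.5}, $\dim\C K(\chi)/Rad=\#\,Irr~mod_\chi(\bH)=\#\C Z_\pm$, so the Gram matrix of the form on a basis of $\C K(\chi)/Rad$ is nondegenerate. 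The elements of $\C Z_\pm$ descend to a basis of $\C K(\chi)/Rad$ (this is part of the geometric setup: the standard modules $X_\xi$ are linearly independent in the Grothendieck group and, under Lusztig's identification, span $\C K(\chi)/Rad$), so the full Gram matrix $((\xi:\xi'))_{\xi,\xi'\in\C Z_\pm}$ is nondegenerate.

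Next I would exploit the block structure coming from the orbit filtration. Order $\C Z_\pm$ so that $\C Z_\pm(\CO_m)$ comes last; by Proposition~\ref{p:2.2} the Gram matrix of the full basis $\C Z_\pm$ is block upper-triangular with respect to the partial order on orbits by dimension in the sense that $(\xi:\xi')=0$ whenever $\xi,\xi'$ lie in \emph{distinct} non-maximal orbit blocks — more precisely it is genuinely block-diagonal on the $\C Z'_\pm$ part and only the coupling between $\C Z'_\pm$ and $\C Z_\pm(\CO_m)$ survives. Writing the full Gram matrix as
\begin{equation*}
G=\begin{pmatrix} \C M_\pm & B\\ B^{t} & D\end{pmatrix},
\end{equation*}
with $\C M_\pm$ block-diagonal, $B$ the coupling block, and $D=((\xi:\xi'))_{\xi,\xi'\in\C Z_\pm(\CO_m)}$, I would argue that $D$ is invertible: for the maximal orbit the relevant component group/local-system data makes the $\CO_m$-block of the form equal (up to the normalization $e_\chi$) to the Gram matrix for the regular nilpotent situation, which by the computation in section~\ref{sec:3} (the ``geometrically trivial'' regular case) is nondegenerate — alternatively one can see $D$ is a single block on which the form restricts nondegenerately because $\CO_m$ is open and dense, so $IC(\overline{\CO_m},\C L)$ restricted to $\CO_m$ is just $\C L$ and the intersection-cohomology corrections vanish. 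Given $\det G\neq 0$ and $\det D\neq 0$, the Schur complement $\C M_\pm - B D^{-1} B^{t}$ is invertible; but I then need to upgrade this to invertibility of $\C M_\pm$ itself.

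For that last point I would invoke the $\beta$-invariance and the precise form of the answer: by Theorem~\ref{t:0.2}/Corollary~\ref{c:0.2} (equivalently by Lusztig's triangularity result, \cite{L2} 1.11, 3.7 quoted in the lemma), the change-of-basis matrix $\C N_{1,1}$ relating $\C Z'_\pm$ to $\C U'_\pm$ is upper unitriangular, and the construction of $\C U'_\pm$ in the next step is designed so that $\C M_\pm$ becomes, in the $\C U'_\pm$-basis, congruent to a block-diagonal matrix with each block a Gram matrix of the form on a genuine (irreducible-module) subspace of $\C K(\chi)/Rad$ on which the form is nondegenerate. Concretely: restricted to a single orbit block, the vectors in $\C Z_\pm(\CO)$ are inductively $\ind_{\fp_{s,\pm}}^{\fg}$ of a basis of $\C K_{\fl_s}(s)/Rad$, and the induction map is compatible with the forms up to the nondegenerate normalization $e_s/e_\chi$, so by the inductive hypothesis (the proposition of \ref{sec:1.5} applied to $\fl_s$ at central character $s$) the corresponding block of $\C M_\pm$ is congruent to the nondegenerate Gram matrix for $\fl_s$. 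The main obstacle is precisely this compatibility: checking that $\ind_{\fp_{s,\pm}}^{\fg}$ identifies the form $(\ :\ )$ on the $\CO$-block with a nonzero scalar multiple of the $\fl_s$-form on $\C K_{\fl_s}(s)/Rad$ — this requires unwinding the $\tau$-function under the substitution $w\mapsto w w_p^{-1}$ of \eqref{eq:1.8.2} and the normalization identity \eqref{eq:1.5.3}, and is the step where the geometry (openness of the $P(\chi)^e$-orbit, Proposition~\ref{p:2.1}(4)–(6)) really enters. Once that compatibility is in hand, invertibility of $\C M_\pm$ follows by induction on $\dim\fg$, the base case being the rank-zero/regular situation where the form is manifestly nondegenerate.
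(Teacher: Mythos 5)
The paper does not prove this lemma; it simply cites it from Lusztig's [L2], 1.11 and 3.7. So any proof you write is new relative to the paper. That said, your proposal as it stands has real gaps, two of which you partly flag yourself.

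First, the Schur-complement detour is circular for a structural reason you may not have noticed: $\C Z_\pm(\CO_m)$ is \emph{not yet defined} at the point where the invertibility of $\C M_\pm$ is needed. Proposition \ref{p:2.4} constructs $\C Z_+(\CO_m)$ out of $Y_+^\perp(\mu_{\xi_0})$ for $\xi_0\in\C Z_-'$, and the $\mu_{\xi_0}$ and $Y_+^\perp$ themselves are defined using $\C M_\pm^{-1}$ (see \eqref{eq:2.3.3} and the formula for $Y_+$). So a proof of Lemma \ref{l:2.3} cannot appeal to the Gram matrix $D$ on $\C Z_\pm(\CO_m)$, nor to the full Gram matrix $G$ on all of $\C Z_\pm$, without begging the question. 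And even setting that aside, invertibility of $G$ and $D$ only gives invertibility of the Schur complement $\C M_\pm-BD^{-1}B^t$, not of $\C M_\pm$, as you yourself note; the attempted ``upgrade'' via $\beta$-invariance and the unitriangularity of $\C N_{1,1}$ does not close this gap, since $\C N_{1,1}$ is a change-of-basis matrix between $\C Z'_\pm$ and $\C U'_\pm$, not a congruence transform of the Gram matrix $\C M_\pm$.

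Second, the inductive strategy at the end is the right idea -- each non-maximal orbit block is $\ind_{\fp_{s,\pm}}^\fg$ of the maximal-orbit block for a proper Levi $\fl_s$ at central character $s$, and one should show the form is carried to a nonzero scalar times the $\fl_s$-form so that invertibility follows by induction on $\dim\fg$. But you explicitly identify this compatibility as ``the main obstacle,'' and then do not prove it: one has to actually compute how $\tau$ transforms under $w\mapsto w w_p^{-1}$ for the $w_p$ of \eqref{eq:1.8.2}, track the normalization factors $e_s$ versus $e_\chi$ from \eqref{eq:1.5.3}, and use Proposition \ref{p:2.1} to see that the induced vectors stay orthogonal to the other orbit blocks (which is Proposition \ref{p:2.2}, already known) and that the form restricted to an orbit block is, up to a unit in $\bQ(\ups)$, pulled back from $\fl_s$. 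Without that computation the proof is a sketch of a program rather than a proof, and the central assertion of the lemma is exactly the step you have left open.
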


For every $\xi\in Z_\pm'$, we find the vector 
\begin{equation}\label{eq:2.3.3}
V_\xi=(a'_{\xi,\xi'})_{\xi'\in Z_\pm'}=\C M_\pm^{-1}\cdot
((\beta(\xi),\xi')_{\xi'\in Z_\pm'}. 
\end{equation}
By lemma 1.13 in \cite{L2}, $a'_{\xi,\xi}=1,$ and $a'_{\xi,\xi'}=0$
unless $\xi'\le\xi.$ Moreover, from \cite{L2}, 1.14,
\begin{equation}\label{eq:2.3.4}
\beta(V_\xi^T)\cdot V_{\xi'}=\left\{\begin{matrix} 1,&\text{ if }
    \xi=\xi'\\
                                                 0,&\text{ if
                                                 }\xi\neq\xi'\end{matrix}\right., 
\end{equation} 
where $V^T$ denotes the transpose of $V$.

\begin{proposition}[\cite{L2}]\label{p:2.3} There exists a unique
  family $\{c_{\xi,\xi'}:\ \xi,\xi'\in Z_\pm'\}\}$ such that 
\begin{enumerate}
\item[(i)] $c_{\xi,\xi}=1$, $c_{\xi,\xi'}=0$ if $\xi'\not\le\xi$, and
  $c_{\xi,\xi'}\in \ups\bZ[\ups]$ if $\xi'<\xi;$
\item[(ii)] $c_{\xi,\xi'}=\displaystyle\sum_{\xi''\in\C Z'_\pm}\beta(c_{\xi,\xi''})~a'_{\xi'',\xi'}.$
\end{enumerate}
Set 
\begin{align}\label{eq:2.3.5}
\mu_\xi=\displaystyle\sum_{\xi'\in\C Z_\pm'}c_{\xi,\xi'}\xi'.
\end{align}
Then $\C U_\pm'=\{\mu_\xi:\xi\in\C Z_\pm'\}.$
\end{proposition}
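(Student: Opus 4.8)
The statement is essentially a Kazhdan--Lusztig-type existence-and-uniqueness argument, run inside the ring $\bZ[\ups]$ relative to the partial order $\le$ on $\C Z'_\pm$, so the plan is to imitate the classical bar-involution bootstrap. First I would reorganize the data: by (\ref{eq:2.3.3}) the matrix $A=(a'_{\xi,\xi'})$ is lower-unitriangular with respect to $\le$ (diagonal entries $1$, zero unless $\xi'\le\xi$), and by (\ref{eq:2.3.4}) it satisfies the orthogonality relation $\beta(A^T)\,A = I$, equivalently $\beta(A)^{-1} = A^T$, i.e.\ $A\,\beta(A) $ has a controlled form. Condition (ii), written in matrix form with $C=(c_{\xi,\xi'})$, reads $C = \beta(C)\,A$, which is the exact analogue of the defining identity $\underline{C} = \bar{\underline C}\cdot(\text{transition matrix})$ for KL bases. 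The task is then to solve $C = \beta(C)\,A$ subject to: $C$ lower-unitriangular for $\le$, and strictly-below-diagonal entries in $\ups\bZ[\ups]$.

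Second, I would do the induction on the poset $\le$ to construct the entries $c_{\xi,\xi'}$ for fixed $\xi$, descending in $\xi'$ from $\xi'=\xi$. Set $c_{\xi,\xi}=1$; for $\xi'<\xi$ assume $c_{\xi,\xi''}$ is already defined for all $\xi''$ with $\xi'<\xi''\le\xi$. Equation (ii) isolates $c_{\xi,\xi'}$: since $a'_{\xi',\xi'}=1$, the $\xi'$-component of $\sum_{\xi''}\beta(c_{\xi,\xi''})a'_{\xi'',\xi'}$ contains the term $\beta(c_{\xi,\xi'})\cdot 1$ plus a known remainder $R_{\xi,\xi'}:=\sum_{\xi'<\xi''\le\xi}\beta(c_{\xi,\xi''})a'_{\xi'',\xi'}$ involving only already-constructed quantities. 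So (ii) becomes $c_{\xi,\xi'} - \beta(c_{\xi,\xi'}) = R_{\xi,\xi'}$. The key elementary fact is that a Laurent polynomial identity $p(\ups) - p(\ups^{-1}) = r(\ups)$ has a unique solution $p$ with $p\in\ups\bZ[\ups]$ provided $r$ is $\beta$-antisymmetric, i.e.\ $\beta(r)=-r$: indeed $p$ is then the sum of the positive-degree part of $r$. Thus existence and uniqueness of $c_{\xi,\xi'}$ follow the moment one checks $\beta(R_{\xi,\xi'}) = -R_{\xi,\xi'}$ and $R_{\xi,\xi'}\in\bZ[\ups,\ups^{-1}]$ with no constant-term obstruction.

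Third — and this is where I expect the real work to be — I would verify the $\beta$-antisymmetry of the remainder $R_{\xi,\xi'}$. This does not come for free from the inductive hypothesis alone; it is forced by the compatibility (\ref{eq:2.3.4}) between $A$ and $\beta$. Concretely, applying $\beta$ to the (already known) partial sums and using $\beta(A^T)A=I$ to re-express $\beta(R)$ as $-R$ plus terms that telescope against the lower-order $c$'s is the crux: this is precisely the step that in the classical setting uses $\overline{\overline{x}}=x$ together with the quadratic relation. I would phrase it as: the vector $(c_{\xi,\xi''})_{\xi''}$ built so far satisfies (ii) truncated to indices $>\xi'$, and multiplying the truncated identity by $A$ on the right and applying $\beta$, the $\beta$-invariance of the combined system pins down $\beta(R_{\xi,\xi'})=-R_{\xi,\xi'}$. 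Granting that, the construction closes, $c_{\xi,\xi}=1$ and $c_{\xi,\xi'}\in\ups\bZ[\ups]$ by the elementary lemma, and uniqueness is immediate since at each step $c_{\xi,\xi'}$ was forced. Finally $\mu_\xi=\sum_{\xi'}c_{\xi,\xi'}\xi'$ is well-defined in $\C K(\chi)/Rad$, and unitriangularity of $C$ shows $\{\mu_\xi\}$ is a basis, so one may legitimately set $\C U'_\pm=\{\mu_\xi\}$; this matches the reference \cite{L2}, whose argument I am reconstructing.
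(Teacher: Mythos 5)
The paper gives no proof of Proposition~\ref{p:2.3}; it is cited from \cite{L2}, so there is no internal argument to compare against. Your reconstruction is the standard Kazhdan--Lusztig bar-involution bootstrap, which is indeed the right strategy: reduce condition (ii) to $c_{\xi,\xi'}-\beta(c_{\xi,\xi'})=R_{\xi,\xi'}$ with $R_{\xi,\xi'}=\sum_{\xi'<\xi''\le\xi}\beta(c_{\xi,\xi''})a'_{\xi'',\xi'}$, and observe that this has a unique solution in $\ups\bZ[\ups]$ exactly when $\beta(R_{\xi,\xi'})=-R_{\xi,\xi'}$. That setup and the elementary lemma are correct, as is the final remark that unitriangularity of $(c_{\xi,\xi'})$ makes $\{\mu_\xi\}$ a basis.

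The gap is at the one nontrivial step, the antisymmetry of $R_{\xi,\xi'}$, and it is concrete: the orthogonality you invoke, $\beta(A^T)A=I$ (equivalently $\beta(A)A^T=I$), cannot be the right identity. Since $a'_{\xi,\xi'}=0$ unless $\xi'\le\xi$, the matrix $A$ is lower-unitriangular and $A^T$ is upper-unitriangular, so $\beta(A)A^T=I$ would force $A=I$; but $A=\beta(C)^{-1}C$ is manifestly not the identity in the worked $gl(4)$ example. The identity the bootstrap actually needs is the analogue of $\overline{\overline{T_w}}=T_w$, namely
\[
\sum_{\xi''}\beta(a'_{\xi,\xi''})\,a'_{\xi'',\xi'}=\delta_{\xi,\xi'},\qquad\text{i.e.}\qquad \beta(A)A=I,
\]
and with it the computation closes: for $\eta>\xi'$, removing the $\zeta=\xi'$ and $\zeta=\eta$ endpoints from $0=\sum_\zeta a'_{\eta,\zeta}\beta(a'_{\zeta,\xi'})$ gives $\sum_{\xi'<\zeta\le\eta}a'_{\eta,\zeta}\beta(a'_{\zeta,\xi'})=-a'_{\eta,\xi'}$, and substituting $c_\zeta=\sum_\eta\beta(c_\eta)a'_{\eta,\zeta}$ into $\beta(R_{\xi,\xi'})=\sum_{\zeta>\xi'}c_\zeta\beta(a'_{\zeta,\xi'})$ yields $\beta(R_{\xi,\xi'})=-R_{\xi,\xi'}$. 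You should re-derive or look up the precise statement of \cite{L2}, 1.14, and rewrite (\ref{eq:2.3.4}) in the form $\beta(A)A=I$ before asserting the antisymmetry; with the relation as you currently state it, the crucial step does not go through.
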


In other words, in the multiplicity matrix,
\begin{equation}\label{eq:2.3.6}
\C N_{1,1}=(c_{\xi,\xi'})_{\xi,\xi'\in\C Z_\pm'}.
\end{equation}

\subsection{} It remains to explain the computation of the sets $\C
Z_+(\CO_m)$ and $\C U_+(\CO_m).$ (The other pair is computed in the
obvious analogue way.)

Since $\C K(\chi)$ has a symmetric bilinear form, for every subspace
$\C W\subset \C K(\chi),$ we can define the orthogonal complement $\C
W^\perp.$ Clearly, $Rad\subset \C W^\perp.$

Let $\C W_+$ be the subspace spanned by $\C Z_+'.$ In fact, $\C Z'_+$ is
a basis of $\C W_+.$ Define the projections $Y_+$, respectively
$Y_+^\perp$ of $\C K(\chi)$ onto $\C W_+$, respectively $\C
W_+^\perp.$ Explicitly, 
\begin{equation}
Y_+^\perp(x)=x-Y_+(x),
\end{equation}
 where 
\begin{equation}
Y_+(x)=\displaystyle\sum_{\xi\in\C Z_+'}a_{x,\xi}\xi,\quad\text{and }
(a_{x,\xi})_{\xi\in \C Z_+'}=\C M_+^{-1}\cdot ((x:\xi'))_{\xi'\in \C Z_+'}.
\end{equation}

\begin{proposition}[\cite{L2}]\label{p:2.4}
Let $J_-$ be defined by 
\begin{equation}
J_-=\{\xi_0\in \C
Z_-':\ Y_+^\perp(\mu_{\xi_0})\notin Rad\}.
\end{equation}
The sets
$\C Z_+(\CO_m)$ and $\C U_+(\CO_m)$ are then obtained as follows:
\begin{equation}
\C Z_+(\CO_m)=\{\xi=Y_+^\perp(\mu_{\xi_0}):\ \xi_0\in J_-\},\quad 
\C U_+(\CO_m)=\{\mu_\xi=\mu_{\xi_0}:\ \xi_0\in J_-\}.
\end{equation}

\end{proposition}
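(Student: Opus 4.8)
The plan is to realize the maximal-orbit part of the bases as the ``new'' basis vectors that appear when one passes from $\C W_+$ to all of $\C K(\chi)/Rad$, exactly as Lusztig does in \cite{L2}. The starting point is the two decompositions of $\C K(\chi)/Rad$: on the one hand $\C K(\chi)/Rad = \overline{\C W_+} \oplus (\text{new classes from }\CO_m)$ by the block structure of $\C N$ in (\ref{eq:matrix}), and on the other hand the analogous decomposition for the ``$-$'' side. First I would note that, since $Rad = \C K(\chi)^\perp$, the projection $Y_+^\perp$ descends to $\C K(\chi)/Rad$ and its image is a complement to $\overline{\C W_+}$ there; so an element $\mu_{\xi_0}$ with $\xi_0\in\C Z_-'$ contributes a genuinely new class precisely when $Y_+^\perp(\mu_{\xi_0})\notin Rad$, which is the defining condition of $J_-$. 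This identifies $\#J_-$ with $\dim \C K(\chi)/Rad - \#\C Z_+' = \#\C Z_+(\CO_m)$, using Proposition (\cite{L2}) on $\dim\C K(\chi)/Rad$ and the bijection $Orb_2 \leftrightarrow Orb_{-2}$ via the Fourier--Deligne transform of \ref{IM}.

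Next I would establish that the claimed vectors are linearly independent modulo $Rad$ and modulo $\C W_+$, hence form a basis of the complement. The key device is the non-degenerate pairing (\ref{eq:2.3.4}): $\beta(V_{\xi}^T)\cdot V_{\xi'} = \delta_{\xi,\xi'}$, which holds on the ``$-$'' side for the dual basis attached to $\C Z_-'$, together with the orthogonality Proposition \ref{p:2.2} on the ``$+$'' side. Pairing a putative relation $\sum_{\xi_0\in J_-} c_{\xi_0} Y_+^\perp(\mu_{\xi_0}) \in Rad$ against the vectors $\beta(V_{\xi_1})$ for $\xi_1\in\C Z_-'$, and using that $Y_+^\perp$ is self-adjoint (because $\C W_+^\perp$ and $\C W_+$ are orthogonal complements for a symmetric form) plus $(Rad : \anything)=0$, should force each $c_{\xi_0}=0$. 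This is the step where one must be careful: $Y_+^\perp$ is self-adjoint only after one checks $(\,Y_+(x):y\,) = (\,x:Y_+(y)\,)$, which follows from the formula for $Y_+$ in terms of $\C M_+^{-1}$ and the symmetry of $\C M_+$, but it has to be spelled out.

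Then I would verify the two bar-invariance / positivity normalizations that pin down $(\C Z_+,\C U_+)$ uniquely, so that the vectors produced really are \emph{the} bases and not merely \emph{a} basis of the complement. Concretely: (a) $\beta(Y_+^\perp(\mu_{\xi_0})) = Y_+^\perp(\mu_{\xi_0})$ modulo $Rad$, which follows from $\beta(\mu_{\xi_0})=\mu_{\xi_0}$ (built into Proposition \ref{p:2.3} via (i)), from $\beta(Rad)=Rad$, and from the fact that $\beta$ commutes with $Y_+$ (again by symmetry of $\C M_+$ and $\beta(\C M_+)$ matching, since $\beta$ fixes the $\bQ(\ups)$-basis up to inverting $\ups$ and $\C M_+$ has entries symmetric under $\ups\mapsto\ups^{-1}$ up to the normalization — this needs (\ref{eq:1.5.3})); and (b) that $\C U_+(\CO_m)=\{\mu_{\xi_0}:\xi_0\in J_-\}$ pairs correctly with $\C Z_+(\CO_m)$, i.e. the change-of-basis block $\C N_{2,2}$ is the identity, which is immediate since $\C U_+(\CO_m)$ is \emph{defined} to equal $\C Z_+(\CO_m)$ here. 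Finally I would record that the off-diagonal block $\C N_{1,2}$ is then determined by expressing each $\mu_{\xi_0}$, $\xi_0\in J_-$, in the full basis $\C Z_+' \sqcup \C Z_+(\CO_m)$; this is the content of (\ref{eq:2.4.4}) and needs nothing beyond linear algebra once the above is in place. The main obstacle is the middle step: proving that the $Y_+^\perp(\mu_{\xi_0})$ for $\xi_0\in J_-$ are independent modulo $Rad$, because this is precisely where the interplay between the ``$+$'' orthogonality (Proposition \ref{p:2.2}) and the ``$-$'' duality (\ref{eq:2.3.4}) is used — and it is the place where Lusztig's argument in \cite{L2} is least formal, so it will require the most care to transcribe faithfully.
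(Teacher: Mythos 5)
The paper states Proposition~\ref{p:2.4} as a citation to \cite{L2} and supplies no proof of its own, so there is no argument in the text against which to check the details of your plan; with that caveat, here is how your proposal reads on its own merits.

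The linear-algebra parts of your plan are sound in outline. Because $\C M_+$ is invertible, $\C W_+\cap\C W_+^\perp=0$, so $\C K(\chi)=\C W_+\oplus\C W_+^\perp$ with $Rad\subset\C W_+^\perp$, hence $Y_+^\perp$ does descend and its image in $\C K(\chi)/Rad$ is a complement to $\ovl{\C W_+}$; and the formula $Y_+(x)=\sum a_{x,\xi}\xi$ with $\C M_+^{-1}$ does make $Y_+$ (hence $Y_+^\perp$) self-adjoint for the symmetric form, as you say. The independence argument pairing against the $\beta$-twisted vectors and using the $+$-orthogonality from Proposition~\ref{p:2.2} together with the $-$-duality \eqref{eq:2.3.4} is a plausible mechanism for showing the $Y_+^\perp(\mu_{\xi_0})$, $\xi_0\in J_-$, are independent modulo $Rad$ and that $\#J_-$ matches the expected dimension.

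However, the proposal misses the decisive ingredient. The proposition does not merely assert that the displayed vectors form \emph{a} basis of a complement with good formal properties; it asserts that they are exactly the classes $\C Z_+(\CO_m)$, i.e.\ the ones indexed by local systems on the open orbit $\CO_m\subset\fg_2$. Bar-invariance plus unitriangularity characterizes the canonical basis $\C U_+$, but it does \emph{not} by itself tell you which canonical-basis elements sit over $\CO_m$ as opposed to some lower orbit. What pins that down is the geometric statement recorded in Remark~(1) after the proposition: $Y_+^\perp$ realizes the Fourier--Deligne transform $FD$, and the $FD$-dual of a local system on the open orbit of $\fg_2$ is supported on a \emph{non-open} orbit of $\fg_{-2}$. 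It is this fact (from \cite{EM} and \cite{L2}) that makes $J_-\subset\C Z_-'$ the correct index set, since $\C Z_-'$ is by construction the part of $\C Z_-$ supported off the open orbit of $\fg_{-2}$. Your plan invokes the $FD$-bijection only to get the count right, not to carry the parameterization; without the geometric input, nothing prevents some $Y_+^\perp(\mu_{\xi_0})$ from being a canonical vector over a \emph{smaller} orbit. This is precisely the step you flag as ``least formal'' but leave unargued.

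A secondary issue: the assertion that $\beta(\mu_{\xi_0})=\mu_{\xi_0}$ ``is built into Proposition~\ref{p:2.3} via~(i)'' is not correct as stated. Condition~(i) gives unitriangularity and $c_{\xi,\xi'}\in\ups\bZ[\ups]$ for $\xi'<\xi$, which is manifestly \emph{not} $\beta$-symmetric; the self-duality encoded in (ii) is $c_{\xi,\xi'}=\sum\beta(c_{\xi,\xi''})a'_{\xi'',\xi'}$, which expresses invariance only after twisting by the matrix $(a'_{\xi'',\xi'})$. Turning this into a usable statement about $\beta(\mu_{\xi_0})$ modulo $Rad$ also requires knowing that $\beta(\C W_\pm)\subseteq\C W_\pm+Rad$, which you do not establish. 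So the normalization step~(a) in your plan needs to be reformulated and justified before it can pin down anything.
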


This concludes the construction of the bases. To complete the matrix
of multiplicities, one finds
\begin{equation}\label{eq:2.4.4}
\C N_{1,2}=(c_{\xi,\xi'})_{\xi\in\C Z_+(\CO_m),\xi'\in\C Z_+'}=\C
  M_+^{-1}\cdot ((\mu_\xi:\xi''))_{\xi\in\C Z_+(\CO_m),\xi''\in\C Z_+'}.
\end{equation}

\medskip

\noindent{\bf Remarks.}

(1) The transformation $Y_+^\perp$ encodes the Fourier-Deligne
transform $FD$ (see \cite{L2}), and the essential fact in the construction
of the proposition is that the $FD$ dual of a local
system on the open orbit in $\fg_2$ is a local system
which {\it does not} live on the open orbit in $\fg_{-2}.$ By theorem \ref{t:IM}, the
equivalent representation theoretic statement is that the
Iwahori-Matsumoto dual of a tempered module is not tempered.

(2) Always, the basis element corresponding to the zero orbit in $\C
Z_-'$,  $\xi_{triv}\in \C Z_-'$, is in $J_-.$ In fact,
$\xi=Y_+^\perp(\xi_{triv})\in \C Z_+(\CO_m)$ corresponds to the
trivial local system on $\CO_m.$ The equivalent, representation
theoretic statement is a combination of two facts: firstly, that the Iwahori-Matsumoto involution of the
generic module is the spherical module, and secondly, that the generic
module is parameterized by the trivial local system on $\CO_m$ (\cite{BM,Re}). 

(3) To compute $FD$ in general (not just for the elements supported on the open orbit), one can use the following procedure. Assume $\xi\in \C Z_+$ corresponds to $(\CO,\C L)$ and $\xi'\in \C Z_-$ corresponds to $(\CO',\C L').$ Then $FD(\C L)=\C L'$ if and only if $\mu_\xi\in \CU_+$ is the (unique) element of $\CU_+$ such that 
\begin{equation}
\{Y^+(\mu_{\xi'})\}\cup (\CU_+\setminus\{\mu_\xi\})
\end{equation}
is a linear independent set (actually a basis) of $\C K(\chi)/Rad.$

\section{Examples: the regular case, $gl(4)$, $sp(4)$, $sp(6),$ and $G_2$}\label{sec:3}

In the explicit examples in $gl(4),$ $sp(4),$ $sp(6)$, $G_2,$ the symbol used to denote the $G(\chi)$-orbits on $\fg_2$ and the local systems encodes  the dimension of the  orbit. When there are more orbits with the same dimension, we add an subscript $a,b,\dotsc.$ If the component group is not trivial, then in these examples it is always $\bZ/2\bZ$, and we add a subscript $t$ or $s$ corresponding to the trivial, respectively the sign representations.

\subsection{Regular central character}\label{sec:3.1}

 Recall that $\Pi\subset \Delta^+$ denotes the set of simple roots, and
 fix root vectors $X_\al.$ 
When $\chi=2\check\rho,$ the orbits and Kazhdan-Lusztig polynomials have an
especially simple form:
\begin{align}
\fg_2(\chi)&=\bigoplus_{\al\in\Pi}\bC\cdot X_\al,\\\notag
G(\chi)&=\text{ the Cartan subgroup }H.
\end{align} 
There is a one-to-one correspondence 
\begin{equation}
Orb_2(\chi)\leftrightarrow 2^\Pi,
\end{equation}
where to every $\Pi_M\subset\Pi$ we associate the orbit
$\CO_M=\sum_{\al\in\Pi_M}\bC^*\cdot X_\al.$ All the orbits have smooth
closures, and only trivial local systems appear, and therefore, all
Kazhdan-Lusztig polynomials are either $0$ or $1$, depending on the
closure ordering. The closure ordering is given by the inclusion of
subsets of $\Pi.$ 

\smallskip

We include however the combinatorial calculation using the algorithm
explained in section \ref{sec:1} and \ref{sec:2}, just for the purpose to
illustrate the elements of this algorithm. 

\medskip

If
$\chi=2\check\rho$, then $r_2(\chi,\Delta)=\Pi$ and
$r_0(\chi,\Delta)=\emptyset.$

Let $\Pi_M$ be a subset of $\Pi,$ and $w_0(M)$ be the longest Weyl
element in $W(M).$ 

\begin{lemma}\label{l:1}\  

(1) If $\Pi_M\subset \Pi,$ then $r_2(\chi,w_0(M))=\Pi\setminus\Pi_M.$

(2) For any $\Pi_{M_1},\Pi_{M_2}\subset \Pi,$
    $$\tau(w_0(M_1):w_0(M_2))=|\Pi_{M_1}\vee \Pi_{M_2}|.$$
In particular, $\tau(w_0(M),1)=|\Pi_M|$ and
$\tau(w_0(M),w_0)=|\Pi|-|\Pi_M|.$ 
\end{lemma}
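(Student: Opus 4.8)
The plan is to work entirely on the combinatorial side $(**)$, using the definition $r_n(w)=r_n(\Delta)\cap(w^{-1}\cdot\Delta^+)$ from (\ref{eq:1.1.2}) together with the fact that here $r_2(\chi,\Delta)=\Pi$ and $r_0(\chi,\Delta)=\emptyset$. Since $r_0(\chi,\Delta)=\emptyset$, the $r_0$-terms drop out of the formula (\ref{eq:1.3.2}) for $\tau$, so part (2) will follow immediately from part (1) once that is in hand, because $\tau((w_1,w_2))=\#(r_2(w_1)\vee r_2(w_2))$ and we only need to identify the sets $r_2(w_0(M))$.

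For part (1), the key observation is that $r_2(\chi,w_0(M))=\Pi\cap(w_0(M)^{-1}\Delta^+)=\{\al\in\Pi: w_0(M)\al\in\Delta^+\}$, since $w_0(M)^{-1}=w_0(M)$. Now I would split $\Pi$ into $\Pi_M$ and $\Pi\setminus\Pi_M$. For $\al\in\Pi_M$, the element $w_0(M)$ is the longest element of the standard parabolic subgroup $W(M)$, hence it sends $\Delta^+(M)$ to $\Delta^-(M)$; in particular $w_0(M)\al$ is a negative root, so $\al\notin r_2(\chi,w_0(M))$. For $\al\in\Pi\setminus\Pi_M$, I use the standard fact that $w_0(M)$ permutes the positive roots not in the span of $\Pi_M$ (equivalently, $w_0(M)$ only makes roots in $\Delta^+(M)$ negative); since a simple root $\al\notin\Pi_M$ is positive and not in $\Delta^+(M)$, its image $w_0(M)\al$ stays positive, so $\al\in r_2(\chi,w_0(M))$. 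This gives $r_2(\chi,w_0(M))=\Pi\setminus\Pi_M$.

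For part (2), substitute into $\tau$: $\tau((w_0(M_1),w_0(M_2)))=\#\big((\Pi\setminus\Pi_{M_1})\vee(\Pi\setminus\Pi_{M_2})\big)$, and the symmetric difference of two complements (inside the common ambient set $\Pi$) equals the symmetric difference of the original sets, so this is $|\Pi_{M_1}\vee\Pi_{M_2}|$. The two special cases follow by taking $M_2=\emptyset$ (so $w_0(M_2)=1$, $\Pi_{M_2}=\emptyset$, giving $|\Pi_M|$) and $M_2=\Pi$ (so $w_0(M_2)=w_0$, $\Pi_{M_2}=\Pi$, giving $|\Pi\setminus\Pi_M|=|\Pi|-|\Pi_M|$). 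I expect the only mild obstacle to be citing cleanly the standard fact that $w_0(M)$ preserves $\Delta^+\setminus\Delta^+(M)$; everything else is bookkeeping with symmetric differences.
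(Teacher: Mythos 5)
Your proof is correct and follows the same line as the paper: both reduce to identifying $r_2(\chi,w_0(M))=\{\al\in\Pi : w_0(M)\al\in\Delta^+\}=\Pi\setminus\Pi_M$ via the standard fact that $w_0(M)$ negates exactly the positive roots of $\Delta(M)$ and preserves $\Delta^+\setminus\Delta^+(M)$, after which part (2) is bookkeeping since $r_0(\chi,\Delta)=\emptyset$ makes the $r_0$-term in $\tau$ vanish. (The paper gives this as a one-line assertion; you supply the details. One minor note: the equality $\Pi\cap(w_0(M)^{-1}\Delta^+)=\{\al\in\Pi: w_0(M)\al\in\Delta^+\}$ is just unwinding the definition of $w^{-1}\Delta^+$ and does not actually use $w_0(M)^{-1}=w_0(M)$, though that is of course also true.)
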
 

\begin{proof}
It follows immediately from the fact that
$r_2(\chi,w_0(M))=\{\al\in\Pi:
w_0(M)\al\in\Delta^+\}=\Pi\setminus\Pi_M.$ 
\end{proof}

Since $W(\chi)=\{1\},$ the bilinear form in this case is
$(w_1:w_2)=(-v)^{\tau(w_1,w_2)},$ for every $w_1,w_2\in W.$ The
radical can also be easily described. For every $\Pi_M\subset\Pi,$
define
\begin{equation}
S_M=\{w\in W: w\al\notin\Delta^+,\forall\al\in\Pi_M\text{ and
}w\beta\in\Delta^+,\forall \beta\in\Pi\setminus\Pi_M\}.
\end{equation}
Note that $w_0(M)\in S_M.$ Then it follows immediately that a basis
for $Rad$ is
\begin{equation}
\bigsqcup_{\Pi_M\subset\Pi}\{w_0(M)-w': w'\in S_P, w'\neq w_0(M)\}.
\end{equation}
(Note that $\dim \C K(\chi)=|W|,$ while $\dim\C K(\chi)/Rad=2^{|\Pi|}.$)

\begin{definition}
For every subset $\Pi_{M'}\subset\Pi,$ define
$\xi_{M'}=\displaystyle{\sum_{\Pi_M\subset\Pi_{M'}}}
  v^{|\Pi_{M'}|-|\Pi_{M}|} w_0(M).$ 
\end{definition}

\begin{proposition}\label{p:1} If  $\Pi_{M'}\subset\Pi,$ we have the
  following identities:

(1) $(\xi_{M'},\xi_{M'})=\displaystyle{\sum_{\Pi_M\subset\Pi_{M'}}(-v^2)^{|\Pi_{M'}|-|\Pi_M|}=\sum_{\Pi_M\subset\Pi_{M'}}
    (-v^2)^{|\Pi_M|}}.$

(2) $(w_0:\xi_{M'})=(-v)^{|\Pi|-|\Pi_{M'}|}\cdot (\xi_{M'}:\xi_{M'}).$

\end{proposition}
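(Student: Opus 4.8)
The plan is to compute both identities directly from the explicit formulas for the bilinear form and for $\xi_{M'}$, using Lemma \ref{l:1} to evaluate every $\tau$ that appears. For part (1), I would expand
\[
(\xi_{M'},\xi_{M'})=\sum_{\Pi_{M_1},\Pi_{M_2}\subset\Pi_{M'}} v^{|\Pi_{M'}|-|\Pi_{M_1}|}\,v^{|\Pi_{M'}|-|\Pi_{M_2}|}\,(w_0(M_1):w_0(M_2)),
\]
and substitute $(w_0(M_1):w_0(M_2))=(-v)^{\tau(w_0(M_1),w_0(M_2))}=(-v)^{|\Pi_{M_1}\vee\Pi_{M_2}|}$ from Lemma \ref{l:1}(2). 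Since $|\Pi_{M_1}\vee\Pi_{M_2}|=|\Pi_{M_1}|+|\Pi_{M_2}|-2|\Pi_{M_1}\cap\Pi_{M_2}|$, the sign $(-1)^{|\Pi_{M_1}\vee\Pi_{M_2}|}$ equals $(-1)^{|\Pi_{M_1}|+|\Pi_{M_2}|}$, and the powers of $v$ collect to $v^{2|\Pi_{M'}|-2|\Pi_{M_1}\cap\Pi_{M_2}|}$. So the double sum becomes $\sum_{\Pi_{M_1},\Pi_{M_2}\subset\Pi_{M'}}(-1)^{|\Pi_{M_1}|+|\Pi_{M_2}|}v^{2|\Pi_{M'}|-2|\Pi_{M_1}\cap\Pi_{M_2}|}$. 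The key step is then a combinatorial identity: summing over all pairs of subsets of $\Pi_{M'}$ with a fixed intersection. I would reorganize the sum by first fixing the intersection $\Pi_N=\Pi_{M_1}\cap\Pi_{M_2}$, but it is cleaner to note that $\sum_{\Pi_{M_1}\subset\Pi_{M'}}(-1)^{|\Pi_{M_1}|}x^{|\Pi_{M_1}\cap\Pi_{M_2}|}$-type sums factor over the elements of $\Pi_{M'}$; concretely, writing $k=|\Pi_{M'}|$, one gets a product over the $k$ coordinates where each coordinate contributes a factor that, after the double sum, telescopes to give $\sum_{\Pi_M\subset\Pi_{M'}}(-v^2)^{|\Pi_M|}$. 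The two stated forms of the answer then agree because replacing $\Pi_M$ by its complement in $\Pi_{M'}$ interchanges $|\Pi_{M'}|-|\Pi_M|$ with $|\Pi_M|$.

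For part (2), I would expand $(w_0:\xi_{M'})=\sum_{\Pi_M\subset\Pi_{M'}}v^{|\Pi_{M'}|-|\Pi_M|}(w_0:w_0(M))$. By Lemma \ref{l:1}(2), $\tau(w_0(M),w_0)=|\Pi|-|\Pi_M|$, and since $\tau$ is symmetric in its two arguments, $(w_0:w_0(M))=(-v)^{|\Pi|-|\Pi_M|}$. Pulling out the factor $(-v)^{|\Pi|-|\Pi_{M'}|}$, the remaining sum is $\sum_{\Pi_M\subset\Pi_{M'}}v^{|\Pi_{M'}|-|\Pi_M|}(-v)^{|\Pi_{M'}|-|\Pi_M|}=\sum_{\Pi_M\subset\Pi_{M'}}(-v^2)^{|\Pi_{M'}|-|\Pi_M|}$, which is exactly $(\xi_{M'}:\xi_{M'})$ by part (1). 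This gives the claimed factorization immediately.

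I expect the only real obstacle to be the combinatorial bookkeeping in part (1): one must be careful that the double sum over pairs $(\Pi_{M_1},\Pi_{M_2})$ really does collapse to a single sum over $\Pi_M$, and that the signs work out. The cleanest way to see this is to index subsets of $\Pi_{M'}$ by their indicator vectors in $\{0,1\}^k$ and observe that the summand factors as a product over coordinates; then the whole double sum is a $k$-fold product of the per-coordinate sum $\sum_{a,b\in\{0,1\}}(-1)^{a+b}v^{2-2ab}=v^2-2v^2\cdot 0\cdots$ — here one simply evaluates the $2\times 2$ sum to get $1+(-v^2)$ (the pair $(0,0)$ contributes $v^2$, the pairs $(0,1),(1,0)$ contribute $-v^2$ each, and $(1,1)$ contributes $1$, total $1-v^2+v^2 -v^2 = 1 - v^2$... so the per-coordinate factor is $(1-v^2)$ after correcting the arithmetic), and then the $k$-fold product expands by the binomial theorem to $\sum_{j=0}^k\binom{k}{j}(-v^2)^j=\sum_{\Pi_M\subset\Pi_{M'}}(-v^2)^{|\Pi_M|}$. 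Part (2) is then essentially free once part (1) is in hand.
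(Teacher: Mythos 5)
Your proof is correct, but it takes a noticeably different route from the paper's. The paper first observes that the pairing $(\xi_{M'}:\xi_{M'})$ only sees the roots in $\Pi_{M'}$, so it reduces to the case $\Pi_{M'}=\Pi$; it then proves the vanishing $(\xi_G:w_0(M_1))=0$ for every proper $\Pi_{M_1}\subsetneq\Pi$ by an involution that toggles a fixed simple root $\alpha\in\Pi\setminus\Pi_{M_1}$ in and out of $\Pi_M$, and concludes that $(\xi_G:\xi_G)=(\xi_G:w_0)$, which is a single visible sum. You instead expand the full double sum over $(\Pi_{M_1},\Pi_{M_2})\subset\Pi_{M'}\times\Pi_{M'}$, observe that both $v^{(|\Pi_{M'}|-|\Pi_{M_1}|)+(|\Pi_{M'}|-|\Pi_{M_2}|)}$ and $(-v)^{|\Pi_{M_1}\vee\Pi_{M_2}|}$ factor over the elements of $\Pi_{M'}$, and reduce the whole calculation to the $2\times2$ per-coordinate sum $v^2-v^2-v^2+1=1-v^2$, getting $(1-v^2)^{|\Pi_{M'}|}$ which by the binomial theorem is exactly $\sum_{\Pi_M\subset\Pi_{M'}}(-v^2)^{|\Pi_M|}$. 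Your approach is more self-contained — it does not need the (implicit in the paper) reduction to a sub-root-system — at the cost of a messier intermediate bookkeeping step, which your writeup reflects in the visible self-correction of the per-coordinate sum. The product structure over coordinates and the paper's $\alpha$-toggling involution are really the same inclusion–exclusion mechanism seen from two angles, so neither argument is deeper than the other. For part (2) the two proofs are essentially identical: expand $(w_0:\xi_{M'})$ using Lemma \ref{l:1}(2) and factor out $(-v)^{|\Pi|-|\Pi_{M'}|}$ to recover the sum from part (1). (Incidentally, the exponent in the paper's last displayed line reads $|\Pi|-|\Pi_M|$ but should be $|\Pi|-|\Pi_{M'}|$, which is what you have.)
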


\begin{proof}

To prove (1), it suffices to prove the identity when $M'=G.$ We will
show first that $(\xi_G:w_0(M_1))=0,$ for all $\Pi_{M_1}\subsetneq
\Pi.$ We have $(\xi_G:w_0(M_1))=\displaystyle{\sum_{\Pi_M\subset\Pi}
v^{|\Pi|-|\Pi_M|}
(-v)^{|\Pi_M\vee\Pi_{M_1}|}}=(-1)^{|\Pi_{M_1}|}v^{|\Pi|+|\Pi_{M_1}|}\sum_{\Pi_M\subset\Pi}
x_{M,M_1},$ where we denoted $x_{M,M_1}=(-1)^{|\Pi_M|}
(v^{-2})^{|\Pi_{M}\cap \Pi_{M_1}|}.$ 

Let $\al$ be a root such that $\al\in\Pi\setminus\Pi_{M_1}.$ Then, for
every $\Pi_M\subset\Pi\setminus\{\al\},$ $x_{M,M_1}=-x_{M\cup\{\al\},M_1}.$ The
last sum can be written as $\displaystyle{\sum_{\Pi_M\subset\Pi\setminus\{\al\}}
}x_{M,M_1}+\displaystyle{\sum_{\Pi_M\subset\Pi\setminus\{\al\}}}
x_{M\cup\{\al\},M_1}=0.$

This implies that $(\xi_G:\xi_G)=(\xi_G:w_0)=\displaystyle{\sum_{\Pi_M\subset\Pi_{M'}}(-v^2)^{|\Pi_{M'}|-|\Pi_M|}}.$

\medskip

Formula (2) follows immediately from (1):

\noindent $(w_0:\xi_{M'})=\displaystyle{\sum_{\Pi_M\subset
    \Pi_{M'}}v^{|\Pi_{M'}|-|\Pi_M|}\cdot (-v)^{|\Pi|-|\Pi_M|}}=(-v)^{|\Pi|-|\Pi_M|}(\xi_{M'}:\xi_{M'}).$

\end{proof}

The basis elements in $\C Z_+$,
respectively $\C U_+$ are
obtained from those of $\C Z_-$, respectively $\C U_-$ by
multiplication by $w_0$ on the right. 
From proposition \ref{p:1} and in view of the algorithm, we can
determine the basis elements of $\C Z_-$ and $\C Z_+.$

\begin{corollary}
The bases are $\C Z_-=\{\xi_{M'}:\Pi_M'\subset \Pi\}$ and $\C
U_-=\{w_0(M'):\Pi_{M'}\subset \Pi\}.$ Moreover, after the sign
normalization, the polynomials are
\begin{equation}c_{M_1,M_2}=\left\{\begin{matrix} 0,&\text{ if } \Pi_{M_1}\not\subseteq
\Pi_{M_2},\\ v^{|\Pi_{M_2}|-|\Pi_{M_1}|}, &\text{ if }\Pi_{M_1}\subseteq\Pi_{M_2}.
\end{matrix}\right.\end{equation}

\end{corollary}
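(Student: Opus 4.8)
The plan is to run the algorithm of Section~\ref{sec:2} by hand in this baby case, using Proposition~\ref{p:1} as the computational engine. The key simplification is that $W(\chi)=\{1\}$, so $\C K(\chi)$ has basis $W$, and the bilinear form is simply $(w_1:w_2)=(-v)^{\tau(w_1,w_2)}$ with $\tau$ computed combinatorially by Lemma~\ref{l:1}. First I would identify the orbit set: by Section~\ref{sec:3.1}, $Orb_2(\chi)\leftrightarrow 2^\Pi$, with $\CO_M$ of dimension $|\Pi_M|$ (this can be read off from Corollary~\ref{c:2.1}, or directly from $\CO_M=\sum_{\al\in\Pi_M}\bC^*X_\al$), so the partial order $\le$ on $\C Z_-'$ is exactly inclusion of subsets, restricted to $\Pi_M\subsetneq\Pi$. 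The maximal orbit $\CO_m$ is $\CO_\Pi$, corresponding to $\Pi_M=\Pi$. I would then check, by induction on $|\Pi|$ applied to standard Levi subalgebras, that the induced basis elements $\C Z_-^{\fl}$ (which by the inductive hypothesis are the $\xi_{M'}$ for $\Pi_{M'}$ contained in the Levi's simple roots) push forward under $\ind_{\fp_{s,-}}^\fg$ to the claimed $\xi_{M'}$; here I must track the minimal-length Weyl element $w_p$ of (\ref{eq:1.8.2}) and verify that right multiplication by $w_p^{-1}$ carries $w_0(M)\in W(\fl)$ to the element named $w_0(M)$ in $W$, i.e. that the notation is consistent. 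This is the inductive construction of $\C Z_-'=\{\xi_{M'}:\Pi_{M'}\subsetneq\Pi\}$.

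Next I would establish $\C U_-'$. By Proposition~\ref{p:1}(1), the Gram matrix $\C M_-=((\xi_{M_1}:\xi_{M_2}))$ — or rather, since the $\xi_{M'}$ are the columns of an upper-unitriangular change of basis from the $w_0(M)$, the relevant matrix — has a clean form, and the argument in the proof of Proposition~\ref{p:1} showing $(\xi_G:w_0(M_1))=0$ for $\Pi_{M_1}\subsetneq\Pi$ generalizes (replace $\Pi$ by $\Pi_{M'}$) to give $(\xi_{M'}:w_0(M_1))=0$ whenever $\Pi_{M_1}\not\subseteq\Pi_{M'}$, and $(\xi_{M'}:w_0(M_1))=(-v)^{\text{something}}(\xi_{M'}:\xi_{M'})$ otherwise. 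From this I would compute the vectors $V_\xi$ of (\ref{eq:2.3.3}) and verify condition (i)--(ii) of Proposition~\ref{p:2.3}: because everything is governed by the Boolean lattice $2^\Pi$ and the Möbius-type cancellation already exploited in Proposition~\ref{p:1}'s proof, the unique solution is $c_{M_1,M_2}=v^{|\Pi_{M_2}|-|\Pi_{M_1}|}$ for $\Pi_{M_1}\subseteq\Pi_{M_2}$ and $0$ otherwise, so $\mu_{\xi_{M'}}=\sum_{\Pi_M\subseteq\Pi_{M'}}c_{M',M}\,\xi_M$; unwinding this telescoping sum against the definition of $\xi_{M'}$ gives $\mu_{\xi_{M'}}=w_0(M')$, which is the asserted form of $\C U_-$. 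This also handles the sign normalization, since $\varepsilon$ depends only on $\xi'$ and in the regular case all the relevant signs are $+1$ (the closures are smooth).

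Finally I would deal with $\C Z_+(\CO_m)$, $\C U_+(\CO_m)$ via Proposition~\ref{p:2.4}. Since $\chi=2\check\rho$ forces $w_0\chi=-\chi$ and the whole situation is symmetric under $w\mapsto ww_0$, the sets $\C Z_+,\C U_+$ are obtained from $\C Z_-,\C U_-$ by right multiplication by $w_0$ (as stated in the text before the Corollary); I would just record that $J_-=\{\xi_{triv}\}$ — i.e. only the zero-orbit element survives $Y_+^\perp$ — which is Remark~(2) after Proposition~\ref{p:2.4}, and that the resulting single new basis element in $\C Z_+(\CO_m)$ is $w_0=w_0(\Pi)$, completing the bases and filling in the last row/column $\C N_{1,2}$, $\C N_{2,2}$ of (\ref{eq:matrix}) as a column of $v^{|\Pi|-|\Pi_{M'}|}$'s and a $1$. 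Assembling $\C N_{1,1}$ from Proposition~\ref{p:2.3} and $\C N_{1,2}$ from (\ref{eq:2.4.4}) yields the stated formula for $c_{M_1,M_2}$ uniformly. The main obstacle I anticipate is not any single deep step but the bookkeeping in the inductive clause: one must pin down $w_p$ and the identification of $w_0(M)$ across the induction precisely enough that the induced $\C Z_-^{\fl}$ really are the claimed $\xi_{M'}$ and not some twisted version; once that compatibility is nailed down, the rest is the Boolean-lattice cancellation already present in Proposition~\ref{p:1}.
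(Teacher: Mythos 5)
Your proposal follows essentially the same route as the paper: both reduce to the binomial (M\"obius) cancellation $\sum_{S\subseteq\Pi\setminus\Pi_M}(-1)^{|S|}=0$ that already underlies Proposition~\ref{p:1}. The paper's proof is much leaner, however: it takes the inductive part of the algorithm (the $\xi_{M'}$ and $w_0(M')$ for $\Pi_{M'}\subsetneq\Pi$, via Levis and Proposition~\ref{p:1}) as already established, and ``it remains to verify'' only the single top-orbit identity $\xi_G=w_0-\sum_{\Pi_{M'}\subsetneq\Pi}(-v)^{|\Pi|-|\Pi_{M'}|}\xi_{M'}$, whereas you re-run the whole algorithm (Gram matrix, $V_\xi$, Proposition~\ref{p:2.3} uniqueness, $J_-$, $Y_+^\perp$) and apply the same cancellation to every $\Pi_{M'}$ at once to get $\mu_{\xi_{M'}}=w_0(M')$; this is correct (I checked that $\sum_{\Pi_M\subseteq\Pi_{M'}}(-v)^{|\Pi_{M'}|-|\Pi_M|}\xi_M=w_0(M')$ does collapse as claimed), but it reproves by hand what the paper's inductive setup gives for free. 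The one place you rightly flag as delicate --- matching the Levi-induced $w_0(M)\cdot w_p^{-1}$ with the ambient $w_0(M)$ --- is exactly what the paper silently absorbs into ``it remains to verify''; a complete write-up of your version would need to nail that compatibility, but once done the content is identical.
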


\begin{proof}
It remains to verify that
$\xi_G=w_0-\displaystyle{\sum_{\Pi_{M'}\subsetneq\Pi}(-v)^{|\Pi|-|\Pi_{M'}|}\xi_{M'}}.$
The right hand side equals
$RHS=w_0-\displaystyle{\sum_{\Pi_{M'}\subsetneq\Pi}\sum{\Pi_M\subset\Pi_{M'}}(-1)^{|\Pi|-|\Pi_{M'}|}
v^{|\Pi|-|\Pi_{M}|}w_0(M)}.$ We rewrite it as
$RHS=w_0-\displaystyle{\sum_{\Pi_M\subsetneq\Pi}v^{|\Pi|-|\Pi_M|}
  w_0(M)}\left(\displaystyle{-1+\sum_{\Pi_{M'}\supset\Pi_M}(-1)^{|\Pi|-|\Pi_{M'}|}}\right).$ Finally, $\displaystyle{ \sum_{\Pi_{M'}\supset\Pi_M}(-1)^{|\Pi|-|\Pi_{M'}|}}=(-1)^{|\Pi|-|\Pi_M|}\displaystyle{\sum_{S\subset\Pi\setminus\Pi_M}(-1)^{|S|}}=0.$  

\end{proof}

In terms of Kazhdan-Lusztig polynomials, this result is formulated as
follows:
\begin{equation}P_{M_1,M_2}(q)=\left\{\begin{matrix} 0,&\text{ if } \Pi_{M_1}\not\subseteq
\Pi_{M_2},\\ 1, &\text{ if }\Pi_{M_1}\subseteq \Pi_{M_2}.
\end{matrix}\right.\end{equation}

\subsection{Zelevinsky's example in $gl(4)$} This is one of the first examples
of nontrivial Kazhdan-Lusztig polynomials (see 11.4 in \cite{Ze3}). Consider $\chi=(2,0,0,-2)$
in $gl(4)$ for simple roots
$\Pi=\{\ep_1-\ep_2,\ep_2-\ep_3,\ep_3-\ep_4\}.$ 
All local systems are trivial. 
The list of orbits is:

\
\begin{tiny}
\noindent\begin{tabular}{|c|c|c|c|c|}
\hline
Dimension &$s$ &$G$-saturation &$\mathcal Z_-$
&$\mathcal U_-$\\
\hline
$0$ &$(0,0,0,0)$ &$(1^4)$ &$\frac v{1+v^2}[1]$ &$\frac v{1+v^2}[1]$\\
$2_a$ &$(1,-1,0,0)$ &$(211)$ &$[s_1]+v[1]$ &$[s_1]+\frac v{1+v^2}[1]$\\
$2_b$ &$(0,0,1,-1)$ &$(211)$ &$[s_3]+v[1]$ &$[s_3]+\frac v{1+v^2}[1]$\\
$3$   &$(1,-1,1,-1)$ &$(22)$ &$[s_1s_3]+v[s_1]+v[s_3]+v^2[1]$ &$[s_1s_3]$\\
$4$   &$(2,0,0,-2)$  &$(31)$ &$\frac v{1+v^2}[w_0]-v[s_1s_3]+\frac
    {v^3}{1+v^2}[1]$ &$\frac v{1+v^2}[w_0]$\\
\hline
\end{tabular}
\end{tiny}

\

The change of basis matrix from $\C Z_-$ to $\C U_-$ is:
$\left(\begin{matrix}
1&-v^2&-v^2&v+v^3&v^4\\
0& 1 &0 & -v &-v^2\\
0&0&1&-v&-v^2\\
0&0&0&1&v\\
0&0&0&0&1
\end{matrix}\right),$
and the matrix of Kazhdan-Lusztig polynomials is:

\begin{center}\(\left(
\begin{array}{l||cccc|c}
 &0&2_a&2_b&3&4\\
\hline\hline
0& 1 & 1 & 1 & 1+q & 1  \\
2_a& 0 & 1 & 0 & 1 & 1  \\
2_b& 0 & 0 & 1 & 1 & 1  \\
3& 0 & 0 & 0 & 1  & 1 \\
\hline
4& 0 & 0 & 0 & 0  & 1
\end{array}
\right)\).
\end{center}

The action of the involution $IM$ is:
\begin{equation}
IM(5)=0,\ IM(3)=3,\ IM(2_a)=2_b.
\end{equation}

\subsection{$\chi=(1,1)$ in ${sp}(4)$}

Consider $\chi=(1,1),$ the middle element of the nilpotent orbit
$(22)$ in ${sp}(4).$ There are $3$ orbits in $Orb_2(\chi),$ the open
orbit with two local systems. The first one listed below is the
trivial. Each Weyl group coset $W/W(\chi)$ is given by the action of a
representative element on $\chi.$

\
\begin{tiny}
\begin{tabular}{|c|c|c|c|}
\hline
Dimension &$s$   &$\mathcal Z_-$ &$\mathcal U_-$\\
\hline
$0$ &$(0,0)$   &$\frac v{1+v^2}[1,1]$ &$\frac
v{1+v^2}[1,1]$ \\
\hline
$2$ &$(0,1)$   &$[1,-1]+v[1,1]$&$[1,-1]+\frac
v{1+v^2}[1,1]$  \\
\hline
$3$ &$(1,1)$   &$\frac v{1+v^2}[-1,-1]-v[1,-1]-\frac
{v^2}{1+v^2}[1,1]$&$\frac v{1+v^2}[-1,-1]$  \\
\hline
 & &$[-1,1]+\frac v{1+v^2}[-1,-1]-\frac{v^2}{1+v^2}[1,1]$&$[-1,1]+\frac
v{1+v^2}[-1,-1]$ \\ 
\hline
\end{tabular}
\end{tiny}

\medskip

The change of basis matrix is 
$\left(
\begin{array}{cc|cc}
 1 & -v^2 & -v^3 & v \\
 0 & 1 & v & 0  \\
\hline
  0 & 0 & 1 & 0 \\
 0 & 0 & 0 & 1
\end{array}
\right),$ and the matrix of Kazhdan-Lusztig polynomials is 
$$\left(
\begin{array}{l||cc|cc}
 &0&2&3_{triv}&3_{sgn}\\
\hline\hline
0& 1 & 1 & 1 & q \\
2& 0 & 1 & 1 & 0  \\
\hline
3_{triv} & 0 & 0 & 1 & 0 \\
3_{sgn}& 0 & 0 & 0 & 1
\end{array}
\right).$$

The action of the involution $IM$ is 
\begin{equation}
IM(3)=1,\ IM(4)=2.
\end{equation}

\subsection{$\chi=(3,1,1)$ in  ${sp}(6)$}

The central character is
$\chi=(3,1,1)$, the middle element of the triangular nilpotent $(4,2)$
in $sp(6).$ There are
$10$ orbits in $Orb_2(\chi)$, two orbits (one of which is the open
orbit) with two local systems. We list the
parameterization of these orbits, the dimensions, the corresponding
Levi subalgebras and the basis elements $\mathcal Z_-$ and $\mathcal
U_-.$ The bases $\mathcal Z_+$ and $\mathcal U_+$ are obtained by
multiplication by $w_0.$

We encode the cosets $W/W(\chi)$ by the $W$ action on $(3,1,1).$

\

\begin{tiny}
\noindent\begin{tabular}{|c|c|c|}
\hline
Dim &$s$   &$\mathcal Z_-$ \\
\hline
\hline
$0$ &$(0,0,0)$  
&$\frac{1}{v+v^{-1}}[3,1,1]$\\
\hline
$2_a$ &$(1,-1,0)$   &$[1,3,1]+v[3,1,1]$
\\
\hline
$2_b$ &$(0,0,1)$ &$[3,1,-1]+v[3,1,1]$
 \\
\hline
$3_a$ &$(1,-1,1)$  
  &$[1,3,-1]+v[1,3,1]+v[3,1,-1]+v^2[3,1,1]$ \\
\hline

$3_{b,t}$ &$(0,1,1)$  
  &$\frac{1}{v+v^{-1}}[3,-1,-1]-v[3,1,-1]-\frac{v}{v+v^{-1}}[3,1,1]$
  \\

\hline

$3_{b,s}$   &
&$[3,-1,1]+\frac{1}{v+v^{-1}}[3,-1,-1]-\frac{v}{v+v^{-1}}[3,1,1]$
\\
\hline

$4_a$ &$(2,0,2)$  
&$[-1,1,3]+v[1,3,-1]+v[3,-1,1]+v^2[3,1,-1]$ \\
\hline

$4_b$ &$(3,1,0)$  
  &$[1,-3,-1]+v[1,1,3]+v[1,3,-1]+v^2[1,3,1]$ \\

\hline

$5_t$ &$(3,1,1)$  &$\frac 1{v+v^{-1}}[-3,-1,-1]-v[-1,1,3]-v[1,-3,-1]$\\&&$-\frac{v^2}{v+v^{-1}}[3,-1,-1]-v^2[1,1,3]-v^2[1,3,-1]$\\&&$-v^2[3,-1,1]-v^3[1,3,1]$  \\

\hline

$5_s$  & &$[-3,-1,1]+\frac{1}{v+v^{-1}}[-3,-1,-1]+v[-1,1,3]+v[1,3,1]$\\&&$-\frac{v^2}{v+v^{-1}}[3,-1,-1]+v^2[1,3,-1]+v^2[3,1,1]+v^3[3,1,-1]$  \\

\hline
\end{tabular}

\end{tiny}

\medskip

\begin{tiny}
\noindent\begin{tabular}{|c|c|c|}
\hline
$s$  &$\mathcal
U_-$ &$G$-saturation\\
\hline
\hline
$(0,0,0)$ &$\frac{1}{v+v^{-1}}[3,1,1]$ &$(1^6)$\\
\hline
$(1,-1,0)$ 
&$[1,3,1]+\frac{1}{v+v^{-1}}[3,1,1]$ &$(221^2)$\\
\hline
$(0,0,1)$ 
  &$[3,1,-1]+\frac{1}{v+v^{-1}} [3,1,1]$ &$(21^4)$\\
\hline
$(1,-1,1)$  &$[1,3,-1]$ &$(2,2,2)$\\
\hline

$(0,1,1)$ 
  &$\frac{1}{v+v^{-1}}[3,-1,-1]$ &$(221^2)$\\

\hline

 &$[3,-1,1]+\frac{1}{v+v^{-1}}[3,-1,-1]$ &\\
\hline

$(2,0,2)$  &$[-1,1,3]$ &$(33)$\\
\hline

$(3,1,0)$
&$[1,-3,-1]+v[1,1,3]+v^2[1,3,-1]+\frac{v^2}{v+v^{-1}}[1,3,1]$ &$(41^2)$\\

\hline

$(3,1,1)$   &$\frac 1{v+v^{-1}}[-3,-1,-1]$ &$(42)$\\

\hline

  &$[-3,-1,1]+\frac 1{v+v^{-1}}[-3,-1,-1]$ &\\

\hline
\end{tabular}

\end{tiny}

\medskip

The change of basis matrix from $\mathcal Z_-$ to $\mathcal U_-$  is:

\

\begin{tiny}$$\left(
\begin{array}{cccccccc|cc}
 1 & -v^2 & -v^2 & (v+v^3) & -v^3 & v & -(v^2+v^4) & -v^4 & -v^5 & v^3 \\
 0 & 1 & 0 & -v & 0 & 0 & v^2 & v^2 & v^3 & -v \\
 0 & 0 & 1 & -v & v & 0 & v^2 & v^2 & v^3 & 0 \\
 0 & 0 & 0 & 1 & 0 & 0 & -v & -v & -v^2 & 0 \\
 0 & 0 & 0 & 0 & 1 & 0 & v & 0 & v^2 & 0 \\
 0 & 0 & 0 & 0 & 0 & 1 & -v & 0 & 0 & v^2 \\
 0 & 0 & 0 & 0 & 0 & 0 & 1 & 0 & v & -v \\
 0 & 0 & 0 & 0 & 0 & 0 & 0 & 1 & v & 0 \\
\hline
 0 & 0 & 0 & 0 & 0 & 0 & 0 & 0 & 1 & 0 \\
 0 & 0 & 0 & 0 & 0 & 0 & 0 & 0 & 0 & 1
\end{array}
\right),$$\end{tiny}

and the matrix of Kazhdan-Lusztig polynomials is

$$\left(
\begin{array}{l||cccccccc|cc}
 &0&2_a&2_b&3_a&3_{b,t}&3_{b,s}&4_a&4_b&5_t&5_s\\
\hline\hline
0& 1 & 1 & 1 & 1+q & 1 & q & 1+q & q & 1 & q \\
2_a& 0 & 1 & 0 & 1 & 0 & 0 & 1 & 1 & 1 & q \\
2_b& 0 & 0 & 1 & 1 & 1 & 0 & 1 & 1 & 1 & 0 \\
3_a& 0 & 0 & 0 & 1 & 0 & 0 & 1 & 1 & 1 & 0 \\
3_{b,t}& 0 & 0 & 0 & 0 & 1 & 0 & 1 & 0 & 1 & 0 \\
3_{b,s}& 0 & 0 & 0 & 0 & 0 & 1 & 1 & 0 & 0 & 1 \\
4_a& 0 & 0 & 0 & 0 & 0 & 0 & 1 & 0 & 1 & 1 \\
4_b& 0 & 0 & 0 & 0 & 0 & 0 & 0 & 1 & 1 & 0 \\
\hline
5_t& 0 & 0 & 0 & 0 & 0 & 0 & 0 & 0 & 1 & 0 \\
5_s& 0 & 0 & 0 & 0 & 0 & 0 & 0 & 0 & 0 & 1
\end{array}
\right).$$
The action of the involution $IM$ is:
\begin{equation}
IM(1)=9,\ IM(2)=5,\ IM(3)=10,\ IM(4)=7,\ IM(6)=8.
\end{equation}

\subsection{$G_2$} 
There are five nilpotent orbits. The regular orbit is a particular case
of section \ref{sec:3.1}, and we will also ignore the trivial orbit.

\subsubsection{$G_2(a_1)$} The central character is
$\chi$, the middle element of the nilpotent $G_2(a_1).$ There are
$4$ orbits of $G(\chi)$ on $\mathfrak g_2(\chi),$ and it turns out they are distinguished by their $G$-saturations. They are:

\begin{tabular}{l|l}
Dimension &$G$-saturation\\ 
\hline
$0$       &$0$\\
$2$       &$A_1$\\
$3$       &$\wti A_1$\\
$4$       &$G_2(a_1)$. 
\end{tabular}

The closure ordering is $$0-2-3-4.$$
The stabilizer of a point in the dense orbit ($4$-dimensional) is
$S_3,$ but only $2$ local systems appear for the equal parameter case
(the extra local system is cuspidal).

\medskip

The matrix of Kazhdan-Lusztig polynomials is:

\medskip

$$\left(
\begin{array}{l||ccc|cc}
  &0&2&3&4_t&4_s\\
\hline\hline
0 &1 & 1 & q+1 & 1 & q  \\
2& 0 & 1 & 1 & 1 & 0 \\
3& 0 & 0 & 1 & 1 & 1 \\
\hline
4_t& 0 & 0 & 0 & 1 & 0 \\
4_s& 0 & 0 & 0 & 0 & 1 \\
\end{array}
\right)$$

In terms of the classical Langlands classification for the $p$-adic
group $\C G$ of type $G_2$ whose dual is $G$, the rows correspond to
the induced standard modules from: the Borel subgroup, the parabolic
of type $A_1$ short, the parabolic of type $A_1$ long, and two
discrete series (the first generic) respectively. We denote these induced modules by
$X(0)$ (this is the full unramified principal series), $X(A_1^s),$
$X(A_1^l),$ and $DS(g),$ $DS(ng)$ respectively. The columns
correspond to the Langlands quotients: $\overline X(0),$ $\overline
X(A_1^s),$ $\overline X(A_1^l),$ and $DS(g),$ $DS(ng)$
respectively. Therefore the character decompositions are:
\begin{align}
X(0)&=\overline X(0)+\overline X(A_1^s)+2\cdot\overline
X(A_1^l)+DS(g)+DS(ng);\\
X(A_1^s)&=\overline X(A_1^s)+\overline X(A_1^l)+DS(g);\\
X(A_1^l)&=\overline X(A_1^l)+DS(g)+DS(ng);\\ 
DS(g)&=DS(g);\\
DS(ng)&=DS(ng).
\end{align}
Finally,
\begin{equation}
IM(DS(g))=\overline X(0),\quad IM(DS(ng))=\overline X(A_1^s), \quad
IM(\overline X(A_1^l))=\overline X(A_1^l).
\end{equation}

\subsubsection{$A_1$ or $\wti A_1$} If $\chi$ is the middle element of
the nilpotent 
$A_1$ or $\wti A_1,$ then $Orb_2(\chi)$ has only two elements, the
zero orbit and the dense orbit of dimension one, and the
component groups are trivial. The matrix of Kazhdan-Lusztig
polynomials is, in both cases,
$$\left(
\begin{array}{l||c|c}
 &0&1\\
\hline\hline
0& 1 & 1 \\
\hline
1& 0 & 1 
\end{array}
\right).$$

\section{Polynomials for $F_4$}\label{sec:4}There are $16$
nilpotent orbits in $F_4.$ We compute the polynomials for the central
characters $\chi$ that are  middle elements of  nilpotent orbits. The
most interesting example is when $\chi$ is the middle element of the
nilpotent $F_4(a_3).$ This is the one with component group of
$S_4$. We first present this example in detail and then record the
other $15$ cases.

We also give the Iwahori-Matsumoto dual of the  tempered
modules. When we write $IM(\xi)=\xi',$ we mean that the $IM$ dual of
the simple module parameterized by $\xi$ is the simple module
parameterized by $\xi'.$

\smallskip

The simple roots we use for $F_4$ are $\alpha_1=(1,-1,-1,-1),$
$\alpha_2=(0,0,0,2),$  $\alpha_3=(0,0,1,-1),$ $\alpha_4=(0,1,-1,0).$
In these coordinates, the ``most interesting'' $\chi$ is $(3,1,1,1).$

The notation for orbits and local systems is as explained at the beginning of section \ref{sec:3}. The only change is that for the open orbit at $\chi=(3,1,1,1),$ the component group being $S_4,$ we use a subscript denoting the partition of $4$ which labels the corresponding irreducible representation of $S_4.$

\subsection{$\chi=(3,1,1,1)$ in $F_4$}\label{sec:3111} This is the middle element of the
nilpotent orbit $F_4(a_3)$. There are $12$ orbits and a total of $20$
local systems. (This was previously known by \cite{DLP}). The component group of the stabilizer of a point in the
open orbit is $S_4,$ so there are irreducible $5$ local systems, one
of which is cuspidal in the sense of Lusztig. We will not consider it
because it doesn't parameterize $mod_\chi(\bH)$.
 Therefore, our matrix has $19$ columns and
rows (corresponding to $12$ orbits), where the last $4$ correspond to
the open orbit. 

The list of orbits follows. For each orbit, we give a label which encodes
the dimension as well, the
semisimple element $s$ (from which the good parabolic  is constructed),
the $G$-saturation of the orbit, and the component group of the
stabilizer in $G(\chi)$ of a point in the orbit.

\begin{tabular}{|c|c|c|c|}
\hline
Dimension &$s$ &$G$-saturation &Components\\
\hline
$0$ &$(0,0,0,0)$ &$0$ &$1$\\
$4$ &$(0,0,0,1)$ &$A_1$ &$1$\\
$6$ &$(0,0,1,1)$ &$\wti A_1$ &$\bZ/2\bZ$\\
$7'$ &$(\frac 12,-\frac 12,\frac 12,\frac 32)$ &$A_1+\wti A_1$ &$1$\\
$7''$ &$(0,1,1,1)$ &$A_1+\wti A_1$ &$1$\\
$8'$ &$(1,-1,1,1)$ &$A_2$ &$\bZ/2\bZ$\\
$8''$ &$(2,0,0,2)$ &$\wti A_2$ &$1$\\
$9$   &$(1,0,1,2)$ &$\wti A_1+A_2$ &$1$\\
$10'$ &$(2,0,1,2)$ &$A_1+\wti A_2$ &$1$\\
$10''$ &$(2,1,1,2)$ &$B_2$ &$\bZ/2\bZ$\\
$11$  &$(\frac 52,\frac 12,\frac 32,\frac 32)$  &$C_3(a_1)$ &$\bZ/2\bZ$\\
$12$ &$(3,1,1,1)$ &$F_4(a_3)$ &$S_4$\\
\hline
\end{tabular}

The Kazhdan-Lusztig polynomials are in the following matrix. There are
$19$ columns, each corresponding to one of the local systems in the
table above. Due to the size of the matrix of polynomials, we break it
into three parts. There is a subtle issue of identifying the
three nontrivial local systems on the open orbit $12$. Note in the
matrix, that they are distinguished by their multiplicity in the first
row. In representation theoretic language, this means that the three
corresponding nongeneric discrete series are distinguished by their
multiplicity in the spherical principal series. Then to complete the
identification, we referred to \cite{Ci}.

\begin{tiny}
\noindent\(\text{Columns 1-8:}\)

\

\noindent\(\left(
\begin{array}{l||cccccccc}
 & 0 &4 &6_t &6_s &7_a &7_b &8_{a,t} &8_{a,s}\\
\hline\hline
 0 & 1 & 1+q & 1+q+q^2 & q+q^2 & 1+2 q+2 q^2+q^3 & 1+q & 1+q+q^2 & q \\
 4 & 0 & 1 & 1 & 0 & 1+q & 1 & 1+q & q \\
 6_t & 0 & 0 & 1 & 0 & 1 & 1 & 1 & 0 \\
 6_s  & 0 & 0 & 0 & 1 & 1 & 0 & 0 & 0 \\
 7_a  &0 & 0 & 0 & 0 & 1 & 0 & 1 & 0 \\
 7_b  &0 & 0 & 0 & 0 & 0 & 1 & 0 & 0 \\
 8_{a,t}  &0 & 0 & 0 & 0 & 0 & 0 & 1 & 0 \\
 8_{a,s}  &0 & 0 & 0 & 0 & 0 & 0 & 0 & 1 \\
 8_b  &0 & 0 & 0 & 0 & 0 & 0 & 0 & 0 \\
 9  & 0 & 0 & 0 & 0 & 0 & 0 & 0 & 0 \\
 10_a  & 0 & 0 & 0 & 0 & 0 & 0 & 0 & 0 \\
 10_{b,t} & 0 & 0 & 0 & 0 & 0 & 0 & 0 & 0 \\
 10_{b,s} & 0 & 0 & 0 & 0 & 0 & 0 & 0 & 0 \\
 11_t  &0 & 0 & 0 & 0 & 0 & 0 & 0 & 0 \\
 11_s  &0 & 0 & 0 & 0 & 0 & 0 & 0 & 0 \\
 12_{(4)}  &0 & 0 & 0 & 0 & 0 & 0 & 0 & 0 \\
 12_{(31)}  &0 & 0 & 0 & 0 & 0 & 0 & 0 & 0 \\
 12_{(22)}  &0 & 0 & 0 & 0 & 0 & 0 & 0 & 0 \\
 12_{(211)}  &0 & 0 & 0 & 0 & 0 & 0 & 0 & 0
\end{array}
\right)\)

\

\

\noindent\(\text{Columns 9-13:}\)

\

\noindent\(\left(
\begin{array}{l||ccccc}
  & 8_b  & 9  & 10_a  & 10_{b,t}  & 10_{b,s} \\
\hline\hline
 0  &1+q+q^2 & 1+q+q^2+q^3 & 1+q+2 q^2+q^3+q^4 & 1+q & q+2 q^2+q^3 \\
 4  &1 & 1+q+q^2 & 1+q+q^2 & 1+q & q+q^2 \\
 6_t  &1 & 1 & 1+q & 1 & q \\
 6_s  &1 & 0 & q & 0 & q \\
 7_a  &1 & 1 & 1+q & 1 & q \\
 7_b  &0 & 1 & 1+q & 1 & q \\
 8_{a,t}  &0 & 1 & 1 & 1 & 0 \\
 8_{a,s}  &0 & 1 & 0 & 1 & 0 \\
 8_b  &1 & 0 & 1 & 0 & 0 \\
 9  &0 & 1 & 1 & 1 & 0 \\
 10_a  &0 & 0 & 1 & 0 & 0 \\
 10_{b,t}  &0 & 0 & 0 & 1 & 0 \\
 10_{b,s}  &0 & 0 & 0 & 0 & 1 \\
 11_t  &0 & 0 & 0 & 0 & 0 \\
 11_s  &0 & 0 & 0 & 0 & 0 \\
 12_{(4)}  &0 & 0 & 0 & 0 & 0 \\
 12_{(31)}  &0 & 0 & 0 & 0 & 0 \\
 12_{(22)}  &0 & 0 & 0 & 0 & 0 \\
 12_{(211)}  &0 & 0 & 0 & 0 & 0
\end{array}
\right)\)

\noindent\(\\
\)

\

\noindent\(\text{Columns 14-19:}\)

\

\noindent\(\left(
\begin{array}{l||cc|cccc}
& 11_t  & 11_s  & 12_{(4)}  & 12_{(31)}  & 12_{(22)}  & 12_{(211)} \\
\hline\hline
 0  &1+2 q+2 q^2+q^3 & q+q^2+q^3+q^4 & 1 & q+q^2 & q+q^2+q^3 & q^3 \\
 4  &1+2 q+q^2 & q & 1 & q & q+q^2 & 0 \\
 6_t  &1+2 q & q & 1 & q & q+q^2 & 0 \\
 6_s  &q & q+q^2 & 0 & q & 0 & q^2 \\
 7_a  &1+2 q & q & 1 & q & q & 0 \\
 7_b  &1+q & q & 1 & q & q^2 & 0 \\
 8_{a,t}  &1+q & 0 & 1 & 0 & q & 0 \\
 8_{a,s}  &0 & 0 & 0 & 0 & 0 & 0 \\
 8_b  &1+q & 1+q & 1 & 1+q & q & q \\
 9  &1 & 0 & 1 & 0 & 0 & 0 \\
 10_a  &1 & 1 & 1 & 1 & 0 & 0 \\
 10_{b,t}  &1 & 0 & 1 & 0 & 1 & 0 \\
 10_{b,s}  &1 & 0 & 0 & 1 & 0 & 0 \\
 11_t  &1 & 0 & 1 & 1 & 1 & 0 \\
 11_s  &0 & 1 & 0 & 1 & 0 & 1 \\
\hline
 12_{(4)}  &0 & 0 & 1 & 0 & 0 & 0 \\
 12_{(31)}  &0 & 0 & 0 & 1 & 0 & 0 \\
 12_{(22)}  &0 & 0 & 0 & 0 & 1 & 0 \\
 12_{(211)}  &0 & 0 & 0 & 0 & 0 & 1
\end{array}
\right)\)

\end{tiny}

\

The Iwahori-Matsumoto involution gives:
\begin{equation}
IM(12_{(4)})=0,\quad IM(12_{(31)})=4,\quad IM(12_{(22)})=6_t,\quad IM(12_{(211)})=8_{a,s}.
\end{equation}

\

From the list of polynomials, we find that the closure ordering in $Orb_2(\chi)$ is as in figure \ref{f:3111}.

\begin{center}\begin{figure}[h]
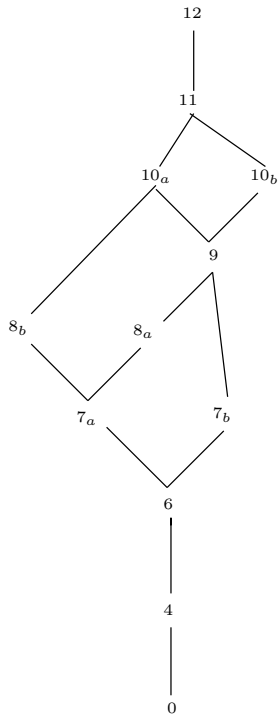
\label{f:3111}
\input F4_3111.pstex_t
\caption{The closure ordering for $Orb_2(\chi)$ in
    $F_4,$ where $\chi=(3,1,1,1).$}
\end{figure}
\end{center}

\subsection{The other $15$ cases} For each $\chi$ we give the list of
orbits and local systems, and the matrix of polynomials. 


\subsubsection{$\chi=(11,5,3,1), $ $\CO=F_4$} 
This is a particular case of the general case $\chi=2\check\rho$ in
section \ref{l:1}.

\subsubsection{$\chi=(7,3,1,1),$ $\CO=F_4(a_1)$} \ 

\begin{tiny}
\begin{tabular}{|c|c|c|c|}
\hline
Dimension &$s$ &$G$-saturation &Components\\
\hline
$0$ &$(0,0,0,0)$ &$0$ &$1$\\
$1$ &$(\frac 12,-\frac 12,-\frac 12,-\frac 12)$ &$A_1$ &$1$\\
$2_a$ &$(0,0,0,1)$ &$A_1$ &$1$\\
$2_b$ &$(0,1,-1,0)$ &$\wti A_1$ &$1$\\
$3_a$ &$(1,-1,-1,1)$ &$A_2$ &$1$\\
$3_b$ &$(\frac 12,\frac 12,-\frac 32,-\frac 12)$ &$A_1+\wti A_1$
&$1$\\
$3_c$ &$(0,1,-1,1)$ &$A_1+\wti A_1$ &$1$\\
$3_d$ &$(0,0,1,1)$ &$\wti A_1$ &$\bZ/2\bZ$\\
$4_a$ &$(0,2,0,2)$ &$\wti A_2$ &$1$\\
$4_b$ &$(0,3,0,1)$ &$C_2$ &$1$\\
$4_c$ &$(1,0,-2,1)$ &$A_2+\wti A_1$ &$1$\\
$4_d$ &$(2,-2,1,1)$ &$B_2$ &$\bZ/2\bZ$\\
$5_a$ &$(3,3,-3,1)$ &$B_3$ &$1$\\
$5_b$ &$(0,3,1,1)$ &$C_3(a_1)$ &$\bZ/2\bZ$\\
$5_c$ &$(\frac 92,\frac 12,-\frac 32,-\frac 72)$ &$C_3$ &$1$\\
$6$  &$(7,3,1,1)$ &$F_4(a_1)$ &$\bZ/2\bZ$\\
\hline
\end{tabular}
\end{tiny}
\begin{equation}
IM(6_t)=0,\quad IM(6_s)=2_a.
\end{equation}


\smallskip

\begin{tiny}
Columns $1-14$:

\noindent\(\left(
\begin{array}{l||cccccccccccccc}
& 0 &1 &2_a &2_b &3_a &3_b &3_c &3_{d,t} &3_{d,s} &4_a &4_b &4_c
  &4_{d,t} &4_{d,s} \\
\hline\hline
0 &1 & 1 & 1 & 1 & 1 & 1 & 1+q & 1 & q & 1+q & 1 & 1+q & 1 & q  \\ 
1 &0 & 1 & 0 & 0 & 1 & 1 & 0 & 0 & 0 & 0 & 0 & 1+q & 1 & q  \\ 
2_a &0 & 0 & 1 & 0 & 1 & 0 & 1 & 1 & 0 & 1& 1 & 1 & 1 & 0  \\
2_b &0 & 0 & 0 & 1 & 0 & 1 & 1 & 0 & 0 & 1 & 1 & 1 & 0 & 0  \\
3_a &0 & 0 & 0 & 0 & 1 & 0 & 0 & 0 & 0 & 0 & 0 & 1 & 1 & 0  \\
3_b &0 & 0 & 0 & 0 & 0 & 1 & 0 & 0 & 0 & 0& 0 & 1 & 0 & 0  \\
3_c &0 & 0 & 0 & 0 & 0 & 0 & 1 & 0 & 0 & 1 & 1 & 1 & 0 & 0  \\
3_{d,t} &0 & 0 & 0 & 0 & 0 & 0 & 0 & 1 & 0 & 1& 0 & 0 & 1 & 0  \\
3_{d,s} & 0 & 0 & 0 & 0 & 0 & 0 & 0 & 0 & 1 & 1& 0 & 0 & 0 & 1  \\
4_a& 0 & 0 & 0 & 0 & 0 & 0 & 0 & 0 & 0 & 1 & 0 & 0 & 0 & 0  \\
4_b& 0 & 0 & 0 & 0 & 0 & 0 & 0 & 0 & 0 & 0 & 1 & 0 & 0 & 0  \\
4_c& 0 & 0 & 0 & 0 & 0 & 0 & 0 & 0 & 0 & 0& 0 & 1 & 0 & 0  \\
4_{d,t}& 0 & 0 & 0 & 0 & 0 & 0 & 0 & 0 & 0 & 0 & 0 & 0 & 1 & 0  \\
4_{d,s}& 0 & 0 & 0 & 0 & 0 & 0 & 0 & 0 & 0 & 0 & 0 & 0 & 0 & 1  \\
5_a& 0 & 0 & 0 & 0 & 0 & 0 & 0 & 0 & 0 & 0 & 0 & 0 & 0 & 0  \\
 5_{b,t} &0 & 0 & 0 & 0 & 0 & 0 & 0 & 0 & 0 & 0& 0 & 0 & 0 & 0  \\
5_{b,s}& 0 & 0 & 0 & 0 & 0 & 0 & 0 & 0 & 0 & 0& 0 & 0 & 0 & 0  \\
5_c& 0 & 0 & 0 & 0 & 0 & 0 & 0 & 0 & 0 & 0& 0 & 0 & 0 & 0   \\
\hline
6_t& 0 & 0 & 0 & 0 & 0 & 0 & 0 & 0 & 0 & 0& 0 & 0 & 0 & 0  \\
6_s& 0 & 0 & 0 & 0 & 0 & 0 & 0 & 0 & 0 & 0& 0 & 0 & 0 & 0 
\end{array}
\right)\)
\end{tiny}

\smallskip

\begin{tiny}
Columns $15-20$:

\noindent\(\left(
\begin{array}{l||cccc|cc}
 &5_a &5_{b,t} &5_{b,s} &5_c &6_t &6_s\\
\hline\hline
0 
& 1 & 1  & q & 1+q & 1 & q \\ 
1 
 & 1 & 0 &  0 & 1+q & 1 & q \\ 
2_a 
 & 1 & 1 & 0 & 1 & 1 & 0 \\
2_b 
 & 1 & 1 & q & 1 & 1 & q \\
3_a 
& 1 & 0 & 0 & 1 & 1 & 0 \\
3_b 
& 1 & 0 & 0 & 1 & 1 & q \\
3_c 
& 1 & 1 & 0 & 1 & 1 & 0 \\
3_{d,t} 
 & 0 & 1 & 0 & 1 & 1 & 0 \\
3_{d,s} 
& 0 & 0 & 1 & 1 & 0 & 1 \\
4_a
& 0 & 1 & 1 & 1 & 1 & 1 \\
4_b
& 1 & 1 & 0 & 0 & 1 & 0 \\
4_c
& 1 & 0 & 0 & 1 & 1 & 0 \\
4_{d,t}
 & 0 & 0 & 0 & 1 & 1 & 0 \\
4_{d,s}
& 0 & 0 & 0 & 1 & 0 & 1 \\
5_a
& 1 & 0 & 0 & 0 & 1 & 0 \\
 5_{b,t} 
& 0 & 1 & 0 & 0 & 1 & 0 \\
5_{b,s}
& 0 & 0 & 1 & 0 & 0 & 1 \\
5_c
& 0 & 0 & 0 & 1 & 1 & 1 \\
\hline
6_t
& 0 & 0 & 0 & 0 & 1 & 0 \\
6_s
& 0 & 0 & 0 & 0 & 0 & 1
\end{array}
\right)\)
\end{tiny}

\subsubsection{$\chi=(5,3,1,1),$ $\CO=F_4(a_2)$}  \ 

\begin{tiny}
\begin{tabular}{|c|c|c|c|}
\hline
Dimension &$s$ &$G$-saturation &Components\\
\hline
$0$ &$(0,0,0,0)$ &$0$ &$1$\\
$2$ &$(0,1,-1,0)$ &$\wti A_1$ &$1$\\
$3$ &$(0,0,0,1)$ &$A_1$ &$1$\\
$4_a$ &$(0,1,-1,1)$ &$A_1+\wti A_1$ &$1$\\
$4_b$ &$(0,0,1,1)$ &$\wti A_1$ &$\bZ/2\bZ$\\
$5_a$ &$(0,2,0,2)$ &$\wti A_2$ &$1$\\
$5_b$ &$(\frac 12,-\frac 12,\frac 12,\frac 32)$ &$A_1+\wti A_1$ &$1$\\
$5_c$ &$(0,3,0,1)$ &$C_2$ &$1$\\
$6_a$ &$(\frac 12,\frac 32,-\frac 12,\frac 52)$ &$A_1+\wti A_2$ &$1$\\
$6_b$ &$(1,-1,1,1)$ &$A_2$ &$\bZ/2\bZ$\\
$6_c$ &$(0,3,1,1)$ &$C_3(a_1)$ &$\bZ/2\bZ$\\
$7_a$ &$(3,3,1,3)$ &$B_3$ &$1$\\
$7_b$ &$(\frac 92,\frac 72,\frac 12,\frac 32)$ &$C_3$ &$1$\\
$8$ &$(5,3,1,1)$ &$F_4(a_2)$ &$\bZ/2\bZ$\\
\hline
\end{tabular}
\end{tiny}
\begin{equation}
IM(8_t)=0,\quad IM(8_s)=4_{b,s}.
\end{equation}

\


\begin{tiny}
Columns $1-9$

\noindent\(\left(
\begin{array}{l||ccccccccc}
& 0&2&3&4_a&4_{b,t}&4_{b,s}&5_a&5_b&5_c\\
\hline\hline
0& 1 & 1 & 1+q & 1+q & 1+q & q & 1+2 q+q^2 & 1+q & 1+q\\

2& 0 & 1 & 0 & 1 & 0 & 0 & 1+q & 0 & 1+q\\

3& 0 & 0 & 1 & 1 & 1 & 0 & 1 & 1 & 1\\

4_a& 0 & 0 & 0 & 1 & 0 & 0 & 1 & 0 & 1\\

4_{b,t}& 0 & 0 & 0 & 0 & 1 & 0 & 1 & 1 & 0\\

4_{b,s}& 0 & 0 & 0 & 0 & 0 & 1 & 1 & 1 & 0 \\

5_a& 0 & 0 & 0 & 0 & 0 & 0 & 1 & 0 & 0\\

5_b& 0 & 0 & 0 & 0 & 0 & 0 & 0 & 1 & 0 \\

5_c& 0 & 0 & 0 & 0 & 0 & 0 & 0 & 0 & 1\\

6_a& 0 & 0 & 0 & 0 & 0 & 0 & 0 & 0 & 0\\

6_{b,t}& 0 & 0 & 0 & 0 & 0 & 0 & 0 & 0 &0 \\

6_{b,s}& 0 & 0 & 0 & 0 & 0 & 0 & 0 & 0 &0\\

6_{c,t}& 0 & 0 & 0 & 0 & 0 & 0 & 0 & 0 &0\\

6_{c,s}& 0 & 0 & 0 & 0 & 0 & 0 & 0 & 0 &0\\

7_a& 0 & 0 & 0 & 0 & 0 & 0 & 0 & 0 & 0 \\

7_b& 0 & 0 & 0 & 0 & 0 & 0 & 0 & 0 & 0\\

\hline
8_t& 0 & 0 & 0 & 0 & 0 & 0 & 0 & 0 & 0\\

8_s& 0 & 0 & 0 & 0 & 0 & 0 & 0 & 0 & 0\\

\end{array}
\right)\)
\end{tiny}

\

\begin{tiny}
Columns $10-18$:

\noindent\(\left(
\begin{array}{l||ccccccc|cc}
&6_a&6_{b,t}&6_{b,s}&6_{c,t}&6_{c,s}&7_a&7_b&8_t&8_s\\
\hline\hline
0 & 1+q & 1 & q & 1+q & q & 1+2 q+q^2 & 1+q & 1 & q \\
2 & 1 & 0 & 0 & 1+q & q & 1+q & 1+q & 1 & q \\
3 & 1& 1 & q & 1 & 0 & 1+2 q & 1 & 1 & q  \\
4_a & 1 & 0 & 0 & 1 & 0 & 1+q & 1 & 1 & q \\
4_{b,t} & 1& 1 & 0 & 1 & 0 & 1+q & 1 & 1 & 0  \\
4_{b,s} & 1& 0 & 0 & 0 & 1 & 0 & 1 & 0 & 0  \\
5_a & 1& 0 & 0 & 1 & 1 & 1 & 2 & 1 & 0  \\
5_b & 1 & 1 & 0 & 0 & 0 & 1 & 1 & 1 & 0 \\
5_c & 0& 0 & 0 & 1 & 0 & 1 & 1 & 1 & q  \\
6_a & 1 & 0 & 0 & 0 & 0 & 1 & 1 & 1 & 0 \\
6_{b,t} & 0& 1 & 0 & 0 & 0 & 1 & 0 & 1 & 0  \\
6_{b,s} & 0& 0 & 1 & 0 & 0 & 1 & 0 & 0 & 1  \\
6_{c,t} & 0& 0 & 0 & 1 & 0 & 1 & 1 & 1 & 0  \\
6_{c,s}& 0& 0 & 0 & 0 & 1 & 0 & 1 & 0 & 0  \\
7_a & 0 & 0 & 0 & 0 & 0 & 1 & 0 & 1 & 1 \\
7_b & 0& 0 & 0 & 0 & 0 & 0 & 1 & 1 & 0
 \\
\hline
8_t & 0& 0 & 0 & 0 & 0 & 0 & 0 & 1 & 0  \\
8_s & 0& 0 & 0 & 0 & 0 & 0 & 0 & 0 & 1
\end{array}
\right)\)
\end{tiny}

\subsubsection{$\chi=(5,1,1,1),$ $\CO=B_3$} \ 

\noindent\begin{tiny}
\begin{tabular}{|c|c|c|c|}
\hline
Dim. &$s$ &$G$-sat. &Comp.\\
\hline
$0$ &$(0,0,0,0)$ &$0$ &$1$\\
$1$ &$(\frac 12,-\frac 12,-\frac 12,-\frac 12)$ &$A_1$ &$1$\\
$3$ &$(0,0,0,1)$ &$A_1$ &$1$\\
$4$ &$(1,-1,-1,1)$ &$A_2$ &$1$\\
$5$ &$(0,0,1,1)$ &$\wti A_1$ &$\bZ/2\bZ$\\
$6_a$ &$(2,-2,1,1)$ &$B_2$ &$\bZ/2\bZ$\\
$6_b$ &$(0,1,1,1)$ &$A_1+\wti A_1$ &$1$\\
$7$ &$(5,1,1,1)$ &$B_3$ &$1$\\
\hline
\end{tabular}
\end{tiny}
\begin{tiny}
\noindent\(\left(
\begin{array}{l||ccccccccc|c}
& 0& 1& 3& 4& 5_t& 5_s& 6_{a,t}& 6_{a,s}& 6_b&7\\
\hline\hline
0& 1 & 1 & 1 & 1 & 1+q^2 & q & 1+q^2 & q & 1 & 1 \\
1& 0 & 1 & 0 & 1 & 0 & 0 & 1+q^2 & q & 0 & 1 \\
3& 0 & 0 & 1 & 1 & 1 & 0 & 1 & 0 & 1 & 1 \\
4& 0 & 0 & 0 & 1 & 0 & 0 & 1 & 0 & 0 & 1 \\
5_t& 0 & 0 & 0 & 0 & 1 & 0 & 1 & 0 & 1 & 1 \\
5_s& 0 & 0 & 0 & 0 & 0 & 1 & 0 & 1 & 0 & 0 \\
6_{a,t}& 0 & 0 & 0 & 0 & 0 & 0 & 1 & 0 & 0 & 1 \\
6_{a,s}& 0 & 0 & 0 & 0 & 0 & 0 & 0 & 1 & 0 & 0 \\
6_b& 0 & 0 & 0 & 0 & 0 & 0 & 0 & 0 & 1 & 1 \\
\hline
7& 0 & 0 & 0 & 0 & 0 & 0 & 0 & 0 & 0 & 1
\end{array}
\right)\)
\end{tiny}
\begin{equation}
IM(7)=0.
\end{equation}

\subsubsection{$\chi=(5,3,1,0),$ $\CO=C_3$}  \ 

\begin{tiny}
\begin{tabular}{|c|c|c|c|}
\hline
Dim. &$s$ &$G$-sat. &Comp.\\
\hline
$0$ &$(0,0,0,0)$ &$0$ &$1$\\
$1_a$ &$(1,-1,0,0)$ &$\wti A_1$ &$1$\\
$1_b$ &$(0,1,-1,0)$ &$\wti A_1$ &$1$\\
$1_c$ &$(0,0,1,0)$ &$A_1$ &$1$\\
$2_a$ &$(2,0,-2,0)$ &$\wti A_2$ &$1$\\
$2_b$ &$(1,-1,1,0)$ &$A_1+\wti A_1$ &$1$\\
$2_c$ &$(0,3,1,0)$ &$B_2$ &$1$\\
$3$ &$(5,3,1,0)$ &$C_3$ &$1$\\
\hline
\end{tabular}
\end{tiny}
\begin{tiny}
\noindent\(\left(
\begin{array}{l||ccccccc|c}
&0&1_a&1_b&1_c&2_a&2_b&2_c&3\\
\hline\hline
0& 1 & 1 & 1 & 1 & 1 & 1 & 1 & 1 \\
1_a& 0 & 1 & 0 & 0 & 1 & 1 & 0 & 1 \\
1_b& 0 & 0 & 1 & 0 & 1 & 0 & 1 & 1 \\
1_c& 0 & 0 & 0 & 1 & 0 & 1 & 1 & 1 \\
2_a& 0 & 0 & 0 & 0 & 1 & 0 & 0 & 1 \\
2_b& 0 & 0 & 0 & 0 & 0 & 1 & 0 & 1 \\
2_c& 0 & 0 & 0 & 0 & 0 & 0 & 1 & 1 \\
\hline
3& 0 & 0 & 0 & 0 & 0 & 0 & 0 & 1
\end{array}
\right)\)
\end{tiny}
\begin{equation}
IM(3)=0.
\end{equation}

\subsubsection{$\chi=(3,1,1,0),$ $\CO=C_3(a_1)$} \

\noindent\begin{tiny}
\begin{tabular}{|c|c|c|c|}
\hline
Dim. &$s$ &$G$-sat. &Comp.\\
\hline
$0$ &$(0,0,0,0)$ &$0$ &$1$\\
$2_a$ &$(1,-1,0,0)$ &$\wti A_1$ &$1$\\
$2_b$ &$(0,0,1,0)$ &$A_1$ &$1$\\
$3_a$ &$(1,-1,1,0)$ &$A_1+\wti A_1$ &$1$\\
$3_b$ &$(0,1,1,0)$ &$\wti A_1$ &$\bZ/2\bZ$\\
$4_a$ &$(2,0,2,0)$ &$\wti A_2$ &$1$\\
$4_b$ &$(3,0,1,0)$ &$B_2$ &$1$\\
$5$ &$(3,1,1,0)$ &$C_3(a_1)$  &$\bZ/2\bZ$\\
\hline
\end{tabular}
\end{tiny}
\begin{tiny}
\noindent\(\left(
\begin{array}{l||cccccccc|cc}
 &0&2_a&2_b&3_a&3_{b,t}&3_{b,s}&4_a&4_b&5_t&5_s\\
\hline\hline
0& 1 & 1 & 1 & 1+q & 1 & q & 1+q & 1 & 1 & q \\
2_a& 0 & 1 & 0 & 1 & 0 & 0 & 1 & 1 & 1 & q \\
2_b& 0 & 0 & 1 & 1 & 1 & 0 & 1 & 1 & 1 & 0 \\
3_a& 0 & 0 & 0 & 1 & 0 & 0 & 1 & 1 & 1 & 0 \\
3_{b,t}& 0 & 0 & 0 & 0 & 1 & 0 & 1 & 0 & 1 & 0 \\
3_{b,s}& 0 & 0 & 0 & 0 & 0 & 1 & 1 & 0 & 0 & 1 \\
4_a& 0 & 0 & 0 & 0 & 0 & 0 & 1 & 0 & 1 & 1 \\
4_b& 0 & 0 & 0 & 0 & 0 & 0 & 0 & 1 & 1 & 0 \\
\hline
5_t &0 & 0 & 0 & 0 & 0 & 0 & 0 & 0 & 1 & 0 \\
5_s& 0 & 0 & 0 & 0 & 0 & 0 & 0 & 0 & 0 & 1
\end{array}
\right)\)
\end{tiny}
\begin{equation}
IM(5_t)=0,\quad IM(5_s)=2_b.
\end{equation}

\subsubsection{$\chi=(\frac{5}{2},\frac{3}{2},\frac{1}{2},\frac{1}{2}),$
  $\CO=A_1+\wti A_2$}  \ 

\begin{tiny}
\begin{tabular}{|c|c|c|c|}
\hline
Dim. &$s$ &$G$-sat. &Comp.\\
\hline

$0$ &$(0,0,0,0)$ &$0$ &$1$\\
$1$ &$(\frac 12,-\frac 12,\frac 12,\frac 12)$ &$A_1$ &$1$\\
$3$ &$(0,1,0,1)$ &$\wti A_1$ &$1$\\
$4_a$ &$(\frac 12,\frac 12,\frac 12,\frac 32)$ &$A_1+\wti A_1$ &$1$\\
$4_b$ &$(2,2,0,0)$ &$\wti A_2$ &$1$\\
$5$ &$(\frac 52,\frac 32,\frac 12,\frac 12)$ &$A_1+\wti A_2$ &$1$\\
\hline
\end{tabular}
\end{tiny}
\begin{tiny}
\noindent\(\left(
\begin{array}{l||ccccc|c}
 &0&1&3&4_a&4_b&5\\
\hline\hline
0& 1 & 1 & 1+q & 1+q & 1 & 1 \\
1& 0 & 1 & 0 & 1+q & 0 & 1 \\
3& 0 & 0 & 1 & 1 & 1 & 1 \\
4_a& 0 & 0 & 0 & 1 & 0 & 1 \\
4_b& 0 & 0 & 0 & 0 & 1 & 1 \\
\hline
5& 0 & 0 & 0 & 0 & 0 & 1
\end{array}
\right)\)
\end{tiny}
\begin{equation}
IM(5)=0.
\end{equation}

\subsubsection{$\chi=(3,1,0,0),$ $\CO=B_2$} \

\begin{tiny}
\begin{tabular}{|c|c|c|c|}
\hline
Dim. &$s$ &$G$-sat. &Comp.\\
\hline

$0$ &$(0,0,0,0)$ &$0$ &$1$\\
$1$ &$(0,1,0,0)$ &$A_1$ &$1$\\
$4$ &$(\frac 12,-\frac 12,-\frac 12,-\frac 12)$ &$A_1$ &$1$\\
$5_a$ &$(1,1,-1,-1)$ &$A_2$ &$1$\\
$5_b$ &$(1,-1,0,0)$ &$\wti A_1$ &$\bZ/2\bZ$\\
$6$   &$(3,1,0,0)$ &$B_2$ &$\bZ/2\bZ$\\
\hline
\end{tabular}
\end{tiny}
\begin{tiny}
\noindent\(\left(
\begin{array}{l||cccccc|cc}
 &0&1&4&5_a&5_{b,t}&5_{b,s}&6_t&6_s\\
\hline\hline
0& 1 & 1 & 1 & 1 & 1 & q^2 & 1 & q^2 \\
1& 0 & 1 & 0 & 1 & 0 & 0 & 1 & q^2 \\
4& 0 & 0 & 1 & 1 & 1 & 0 & 1 & 0 \\
5_a& 0 & 0 & 0 & 1 & 0 & 0 & 1 & 0 \\
5_{b,t}& 0 & 0 & 0 & 0 & 1 & 0 & 1 & 0 \\
5_{b,s}& 0 & 0 & 0 & 0 & 0 & 1 & 0 & 1 \\
\hline
6_t& 0 & 0 & 0 & 0 & 0 & 0 & 1 & 0 \\
6_s& 0 & 0 & 0 & 0 & 0 & 0 & 0 & 1
\end{array}
\right)\)
\end{tiny}
\begin{equation}
IM(6_t)=0,\quad IM(6_s)=4.
\end{equation}

\subsubsection{$\chi=(2,1,1,0),$ $\CO=A_2+\wti A_1$}\

\noindent\begin{tiny}
\begin{tabular}{|c|c|c|c|}
\hline
Dim. &$s$ &$G$-sat. &Comp.\\
\hline

$0$ &$(0,0,0,0)$ &$0$ &$1$\\
$4$ &$(0,0,1,0)$ &$A_1$ &$1$\\
$5$ &$(0,1,1,0)$ &$\wti A_1$ &$\bZ/2\bZ$\\
$7$ &$(\frac 12,\frac 12,\frac 32,-\frac 12)$ &$A_1+\wti A_1$ &$1$\\
$8$ &$(1,-1,1,1)$ &$A_2$ &$\bZ/2\bZ$\\
$9$ &$(2,1,1,0)$ &$A_2+\wti A_1$ &$1$\\
\hline
\end{tabular}
\end{tiny}
\begin{tiny}
\noindent\(\left(
\begin{array}{l||ccccccc|c}
 &0&4&5_t&5_s&7&8_t&8_s&9\\
\hline\hline
0& 1 & 1+q & 1+q+q^2 & q & 1+q & 1+q+q^2 & q & 1 \\
4& 0 & 1 & 1 & 0 & 1 & 1+q & q & 1 \\
5_t& 0 & 0 & 1 & 0 & 1 & 1+q & 0 & 1 \\
5_s& 0 & 0 & 0 & 1 & 1 & 0 & 0 & 0 \\
7& 0 & 0 & 0 & 0 & 1 & 1 & 0 & 1 \\
8_t& 0 & 0 & 0 & 0 & 0 & 1 & 0 & 1 \\
8_s& 0 & 0 & 0 & 0 & 0 & 0 & 1 & 0 \\
\hline
9& 0 & 0 & 0 & 0 & 0 & 0 & 0 & 1
\end{array}
\right)\)
\end{tiny}
\begin{equation}
IM(9)=0.
\end{equation}

\subsubsection{$\chi=(2,0,0,0),$ $\CO=A_2$} \ 

\noindent\begin{tiny}
\begin{tabular}{|c|c|c|c|}
\hline
Dim. &$s$ &$G$-sat. &Comp.\\
\hline

$0$ &$(0,0,0,0)$ &$0$ &$1$\\
$7$ &$(\frac 12,-\frac 12,-\frac 12,-\frac 12)$ &$A_1$ &$1$\\
$10$ &$(1,-1,0,0)$ &$\wti A_1$ &$\bZ/2\bZ$\\
$13$ &$(\frac 32,-\frac 12,-\frac 12,-\frac 12)$ &$A_1+\wti A_1$ &$1$\\
$14$ &$(2,0,0,0)$ &$A_2$ &$\bZ/2\bZ$\\
\hline
\end{tabular}
\end{tiny}
\begin{tiny}
\noindent\(\left(
\begin{array}{l||ccccc|cc}
 &0&7&10_t&10_s&13&14_t&14_s\\
\hline\hline
0& 1 & 1+q^3 & 1+q^3 & q^2 & 1+q^3 & 1 & q^3 \\
7& 0 & 1 & 1 & 0 & 1 & 1 & q^3 \\
10_t& 0 & 0 & 1 & 0 & 1 & 1 & 0 \\
10_s &0 & 0 & 0 & 1 & q & 0 & 0 \\
13& 0 & 0 & 0 & 0 & 1 & 1 & 0 \\
\hline
14_t& 0 & 0 & 0 & 0 & 0 & 1 & 0 \\
14_s& 0 & 0 & 0 & 0 & 0 & 0 & 1
\end{array}
\right)\)
\end{tiny}
\begin{equation}
IM(14_t)=0,\quad IM(14_s)=10_s.
\end{equation}

\subsubsection{$\chi=(2,2,0,0),$ $\CO=\wti A_2$} \

\begin{tiny}
\begin{tabular}{|c|c|c|c|}
\hline
Dim. &$s$ &$G$-sat. &Comp.\\
\hline

$0$ &$(0,0,0,0)$ &$0$ &$1$\\
$7$ &$(0,1,-1,0)$ &$\wti A_1$ &$1$\\
$8$ &$(2,2,0,0)$ &$\wti A_2$ &$1$\\
\hline
\end{tabular}
\end{tiny}
\begin{tiny}
\noindent\(\left(
\begin{array}{l||cc|c}
 &0&7&8\\
\hline\hline
0& 1 & 1+q^3 & 1 \\
7& 0 & 1 & 1 \\
\hline
8& 0 & 0 & 1
\end{array}
\right)\)
\end{tiny}
\begin{equation}
IM(8)=0.
\end{equation}

\subsubsection{$\chi=(\frac{3}{2},\frac{1}{2},\frac{1}{2},\frac{1}{2}),$
  $\CO=A_1+\wti A_1$}\ 

\begin{tiny}
\begin{tabular}{|c|c|c|c|}
\hline
Dim. &$s$ &$G$-sat. &Comp.\\
\hline

$0$ &$(0,0,0,0)$ &$0$ &$1$\\
$3$ &$(\frac 12,-\frac 12,\frac 12,\frac 12)$ &$A_1$ &$1$\\
$5$ &$(1,0,0,1)$ &$\wti A_1$ &$\bZ/2\bZ$\\
$6$ &$(\frac 32,\frac 12,\frac 12,\frac 12)$ &$A_1+\wti A_1$ &$1$\\
\hline
\end{tabular}
\end{tiny}
\begin{tiny}
\noindent\(\left(
\begin{array}{l||cccc|c}
 &0&3&5_t&5_s&6\\
\hline\hline
0& 1 & 1 & 1+q^2 & q & 1 \\
3& 0 & 1 & 1 & 0 & 1 \\
5_t& 0 & 0 & 1 & 0 & 1 \\
5_s& 0 & 0 & 0 & 1 & 0 \\
\hline
6& 0 & 0 & 0 & 0 & 1
\end{array}
\right)\)
\end{tiny}
\begin{equation}
IM(6)=0.
\end{equation}

\subsubsection{$\chi=(1,1,0,0),$ $\CO=\wti A_1$} \

\begin{tiny}
\begin{tabular}{|c|c|c|c|}
\hline
Dim. &$s$ &$G$-sat. &Comp.\\
\hline

$0$ &$(0,0,0,0)$ &$0$ &$1$\\
$6$ &$(0,1,0,0)$ &$A_1$ &$1$\\
$7$ &$(1,1,0,0)$ &$\wti A_1$ &$\bZ/2\bZ$\\
\hline
\end{tabular}
\end{tiny}
\begin{tiny}
\noindent\(\left(
\begin{array}{l||cc|cc}
 &0&6&7_t&7_s\\
\hline\hline
0& 1 & 1 & 1 & q^3 \\
6& 0 & 1 & 1 & 0 \\
\hline
7_t& 0 & 0 & 1 & 0 \\
7_s& 0 & 0 & 0 & 1
\end{array}
\right)\)
\end{tiny}
\begin{equation}
IM(7_t)=0,\quad IM(7_s)=6.
\end{equation}

\subsubsection{$\chi=(1,0,0,0),$ $\CO=A_1$}\

\begin{tiny}
\begin{tabular}{|c|c|c|c|}
\hline
Dim. &$s$ &$G$-sat. &Comp.\\
\hline

$0$ &$(0,0,0,0)$ &$0$ &$1$\\
$1$ &$(1,0,0,0)$ &$A_1$ &$1$\\
\hline
\end{tabular}
\end{tiny}
\begin{tiny}
\noindent\(\left(
\begin{array}{l||c|c}
 &0&1\\
\hline\hline
0& 1 & 1 \\
\hline
1& 0 & 1
\end{array}
\right)\)
\end{tiny}
\begin{equation}
IM(1)=0.
\end{equation}

\subsubsection{$\chi=(0,0,0,0),$ $\CO=0$}
 There is only the trivial orbit, and only one polynomial equal to $1$.

\end{document}